\tikzset{->-/.style={decoration={
  markings,
  mark=at position .5 with {\arrow{>}}},postaction={decorate}}}
\pgfplotsset{compat=1.13}
\newcommand\scalemath[2]{\scalebox{#1}{\mbox{\ensuremath{\displaystyle #2}}}}
\DeclareMathOperator{\Hom}{Hom}
\newtheorem*{Thm*}{Theorem A}
\newtheorem*{Thm**}{Theorem B}
\newtheorem{Thm}{Theorem}
\newtheorem{Prop}[Thm]{Proposition}
\newtheorem{Lem}[Thm]{Lemma}
\newtheorem{Cor}[Thm]{Corollary}
\theoremstyle{remark}
\newtheorem{Rem}[Thm]{Remark}
\theoremstyle{definition}
\newtheorem{Def}[Thm]{Definition}
\begin{document}

\title{Growth Diagrams and Minuscule Polygon Configurations in the Affine Grassmannian}
\author{Tair Akhmejanov} 
  
\begin{abstract}
We define affine growth diagrams consisting of $GL_m$ dominant weights that label the vertices of a staircase-shaped grid. These are also called cylindrical growth diagrams as defined by Speyer and White in the case of partitions. The weights labelling each adjacent pair of vertices differ by a vertical strip and the weights around each unit square satisfy a local condition that appeared in van Leeuwen's work on the Littelmann path model for crystals.

We prove two main results. For a sequence of minuscule weights $\vec\lambda=(\lambda^1,\ldots,\lambda^n)$ let Poly$(\vec\lambda)$ denote the configuration space of $n$-tuples of points $(g_1,\ldots,g_n)$ in the affine Grassmannian such that the weight-valued distances satisfy $d(g_i,g_{i+1})=\lambda^i$.  This is the convolution variety arising in the geometric Satake correspondence. We show that for a generic point $(g_1,\ldots,g_n)$ of a component the distances $d(g_i,g_j)$ form an affine growth diagram and that this gives a bijection between components of Poly$(\vec\lambda)$ and affine growth diagrams of type $\vec\lambda$. The main tool used in the proof is the Knutson--Tao hive. 

In the second part, we give a purely combinatorial construction of affine growth diagrams from natural number entries by applying Greene's theorem to certain subrectangles of the staircase. From this construction it follows that affine growth diagrams contain the classical Fomin growth diagrams and realize the RS-correspondence when $\vec\lambda=(\omega_1,\ldots,\omega_1,\omega_1^*,\ldots,\omega_1^*)$. 
\end{abstract}

\maketitle
\tableofcontents

\section{Introduction}
\subsection{The Main Definition}
The central objects of this paper are staircase-shaped diagrams consisting of $GL_m$ dominant weights. Recall that the dominant-weight lattice of $GL_m$ is the set of weakly decreasing sequences of integers $\lambda_1\geq \ldots\geq\lambda_m$. Let $\omega_k=(1,\ldots,1,0,\ldots,0)$ denote the $k$th fundamental weight of $GL_m$, where there are $k$ many $1$'s, and $\omega_k^*=(0,\ldots,0,-1,\ldots,-1)$ its dual. The dual of a weight $\lambda=(\lambda_1,\ldots,\lambda_m)$  is the negated reversal $\lambda^*=(-\lambda_m,\ldots,-\lambda_1)$. Define the \emph{minuscule weights} as the set of weights $\omega_k$ and $\omega_k^*$ for $1\leq k< m$. We will use $\vec{0}$ to denote the zero weight. Let
\begin{align*}
St_n=\{(i,j)\in \mathbb Z^2\mid i\leq j\leq i+n\}
\end{align*}
be the subset of points in $\mathbb Z^2$ forming an infinite staircase of width $n$. The point $(i,j)$ will be viewed as the vertex in row $i$ and column $j$ as in matrix notation.
\begin{Def}\label{diagram}
Let $\vec\lambda=(\lambda^1,\ldots,\lambda^n)$ be a sequence of minuscule $GL_m$ weights. An \emph{affine growth diagram of type $\vec{\bf \lambda}$} is a labelling of the lattice points in $St_n$ by dominant weights $\{\gamma_{i,j}\}_{(i,j)\in St_n}$, such that for all $i$ we have $\gamma_{i,i}=\vec{0}=\gamma_{i,i+n}$ and $\gamma_{i,i+1}=\lambda^{i}$ where the indices $i$ in $\lambda^i$ are taken modulo $n$. Furthermore, each of the differences $\gamma_{i,j+1}-\gamma_{i,j}$ and $\gamma_{i+1,j}-\gamma_{i,j}$ is a vertical strip, and for each unit square the following local condition is satisfied.
\begin{align}\label{local condition}
\gamma_{i+1,j+1}=\text{sort}(\gamma_{i+1,j}+\gamma_{i,j+1}-\gamma_{i,j})
\end{align}
\end{Def}
Since weights are $m$-tuples of integers, the addition and subtraction operations are defined component-wise. A \emph{positive vertical strip} (resp. \emph{negative vertical strip}) is a sequence of integers $(z_1,\ldots,z_n)$, not necessarily weakly decreasing, where each $z_i$ is equal to $0$ or $1$ (resp. $0$ or $-1$). By \emph{vertical strip} we mean either a positive or negative vertical strip. The sort operator puts any sequence of integers into weakly decreasing order, thereby making the weight dominant. 

Growth diagrams on a staircase were defined in \cite{Spe, Whi} for the case that each vertex is labelled by a partition and neighboring partitions differ by a box. There they are called cylindrical diagrams due to their periodicity property. Here we call them affine growth diagrams due to their connection to the geometry of configurations in the affine Grassmannian. In the setup of \cite{Spe,Whi}, the labels $\gamma_{i,i}$ are the zero partition and the labels $\gamma_{i,i+n}$ are a fixed rectangular shape for all $i$. Our main combinatorial result is that allowing for $GL_m$ weights and vertical strips leads to the realization of classical Fomin growth diagrams \cite{Fom} and the RSK-correspondence within these diagrams. See also \cite{PRW, Wes} for connections to coboundary categories. The local condition (\ref{local condition}) originally appeared in the work of van Leeuwen \cite{vLee2}, as explained in \S\ref{sec:crystals}.

A convenient way to visualize a dominant $GL_m$ weight $\gamma$ is as $m$ rows of boxes where the positive entries of $\gamma$ correspond to ``positive'' boxes and the negative entries to ``negative'' boxes. Since $\gamma$ is weakly decreasing this gives a positive Young diagram above a $90$-degree rotated, ``negative'' one. See Figures \ref{empty} and \ref{first example} for an example of an affine growth diagram with weights drawn in this way. Although the definition consists of infinitely many dominant weights, it will follow from the connection to geometry discussed in the next subsection of the introduction that these diagrams have period $n$ in the rows, so consist of a finite amount of data. Throughout the paper, $n$ will refer to the width of the staircase diagrams and $m$ will refer to $GL_m$. 

This paper contains two parts that are largely independent of each other, but have affine growth diagrams as the common object. The first part draws a connection to the geometry of the affine Grassmannian. In \S\ref{geometry} we prove that the components of the convolution varieties arising in the geometric Satake correspondence for minuscule $GL_m$ weights are in bijection with affine growth diagrams. The methods are combinatorial, once we recall the result of Goncharov-Shen \cite{GS} giving a bijection between components and Knutson--Tao hives. 

In \S\ref{combinatorics} we study affine growth diagrams purely combinatorially, independent to their relation to geometry, although it is interesting that such a connection exists. We prove that classical Fomin growth diagrams appear as subdiagrams within affine growth diagrams, recovering the Robinson--Schensted correspondence. Fomin growth diagrams have an interpretation in terms of Greene's theorem. The combinatorial construction of \S\ref{sec:main combinatorial theorem} generalizes this interpretation to staircase diagrams and may be of independent interest. We discuss both the geometric and combinatorial results in the remainder of this introduction. 

\begin{figure}
\centering
\begin{minipage}{.49\textwidth}
\centering
\scalemath{.7}{
\begin{tikzpicture}
\draw[step=1cm] (-5,2) grid (0,3);
\draw[step=1cm] (-4,1) grid (1,2);
\draw[step=1cm] (-3,0) grid (2,1);
\draw[step=1cm] (-2,-1) grid (3,0);
\draw[step=1cm] (-1,-2) grid (4,-1);
\draw[step=1cm] (0,-3) grid (5,-2);
\draw (0,-3)--(6,-3);
\draw (-6,3)--(0,3);

\Yboxdim{2mm}
\node at (-5.8,3.2) {$\vec{0}$}; 
\node at (-4.8,3.2) {$\lambda^1$};
\node at (0.2,3.2) {$\vec{0}$};

\node at (-4.8,2.2) {$\vec{0}$}; 
\node at (-3.8,2.2) {$\lambda^2$};
\node at (1.2,2.2) {$\vec{0}$};

\node at (-3.8,1.2) {$\vec{0}$};  
\node at (-2.8,1.2) {$\lambda^3$};
\node at (2.2,1.2) {$\vec{0}$};

\node at (-2.8,.2) {$\vec{0}$};  
\node at (-1.8,.2) {$\lambda^4$};
\node at (3.2,.2) {$\vec{0}$};

\node at (-1.8,-.8) {$\vec{0}$};  
\node at (-.8,-.8) {$\lambda^5$};
\node at (4.2,-.8) {$\vec{0}$};

\node at (-.8,-1.8) {$\vec{0}$};  
\node at (0.2,-1.8) {$\lambda^6$};
\node at (5.2,-1.8) {$\vec{0}$};

\node at (.2,-2.8) {$\vec{0}$};
\node at (1.2,-2.8) {$\lambda^1$};
\node at (6.2,-2.8) {$\vec{0}$};
\end{tikzpicture}}
  \captionsetup{width=.8\linewidth}
\caption{An empty affine growth diagram of type $\vec\lambda=(\lambda^1,\ldots,\lambda^6).$ \label{empty}}
\end{minipage}
\begin{minipage}{.49\textwidth}
\centering
\scalemath{.7}{
\begin{tikzpicture}
\draw[step=1cm] (-5,2) grid (0,3);
\draw[step=1cm] (-4,1) grid (1,2);
\draw[step=1cm] (-3,0) grid (2,1);
\draw[step=1cm] (-2,-1) grid (3,0);
\draw[step=1cm] (-1,-2) grid (4,-1);
\draw[step=1cm] (0,-3) grid (5,-2);
\draw (0,-3)--(6,-3);
\draw (-6,3)--(0,3);

\Yboxdim{2mm}
\node at (-5.8,3.2) {$\vec{0}$}; 
\node at (-4.8,3.2) {$\yng(1)$};
\node at (-3.8,3.25) {$\yng(1,1)$};
\node at (-2.7,3.35) {$\young(:~,:~,~)$};
\node at (-1.7,3.35) {$\young(:~,:\empty,~)$};
\node at (0-.7,3.2) {$\yng(1)$};
\node at (0.2,3.2) {$\vec{0}$};
\node at (-4.8,2.2) {$\vec{0}$}; 
\node at (-3.8,2.2) {$\yng(1)$};
\node at (-2.75,2.35) {$\young(:~,:\empty,~)$};
\node at (-1.7,2.35) {$\young(:~,~,~)$};
\node at (-.725,2.35) {$\young(:~,:\empty,~)$};
\node at (0.175,2.35) {$\young(:\empty,:\empty,~)$};
\node at (1.2,2.2) {$\vec{0}$};
\node at (-3.8,1.2) {$\vec{0}$};  
\node at (-2.8,1.35) {$\young(:\empty,:\empty,~)$};
\node at (-1.725,1.35) {$\young(:\empty,~,~)$};
\node at (-.75,1.35) {$\young(:\empty,:\empty,~)$};
\node at (0.25,1.35) {$\young(:\empty,~,~)$};
\node at (1.2,1.35) {$\young(:\empty,:\empty,~)$};
\node at (2.2,1.2) {$\vec{0}$};
\node at (-2.8,.2) {$\vec{0}$};  
\node at (-1.8,.35) {$\young(:\empty,:\empty,\empty)$};
\node at (-.75,.35) {$\young(:~,:\empty,~)$};
\node at (0.3,.35) {$\young(:~,~,~)$};
\node at (1.25,.35) {$\young(:~,:\empty,~)$};
\node at (2.2,.2) {$\yng(1)$};
\node at (3.2,.2) {$\vec{0}$};
\node at (-1.8,-.8) {$\vec{0}$};  
\node at (-.8,-.8) {$\yng(1)$};
\node at (0.25,-.65) {$\young(:~,:\empty,~)$};
\node at (1.25,-.65) {$\young(:~,:~,~)$};
\node at (2.2,-.75) {$\yng(1,1)$};
\node at (3.2,-.8) {\yng(1)};
\node at (4.2,-.8) {$\vec{0}$};
\node at (-.8,-1.8) {$\vec{0}$};  
\node at (0.2,-1.65) {$\young(:\empty,:\empty,~)$};
\node at (1.275,-1.65) {$\young(:~,:\empty,~)$};
\node at (2.2,-1.8) {$\yng(1)$};
\node at (3.275,-1.65) {$\young(:~,:\empty,~)$};
\node at (4.175,-1.65) {$\young(:\empty,:\empty,~)$};
\node at (5.2,-1.8) {$\vec{0}$};
\node at (.2,-2.8) {$\vec{0}$};
\node at (1.2,-2.8) {\yng(1)};
\node at (2.2,-2.75) {\yng(1,1)};
\node at (3.3,-2.65) {$\young(:~,:~,~)$};
\node at (4.3,-2.65) {$\young(:~,:\empty,~)$};
\node at (5.2,-2.8) {\yng(1)};
\node at (6.2,-2.8) {$\vec{0}$};
\end{tikzpicture}}
  \captionsetup{width=.9\linewidth}
\caption{An affine growth diagram of type $(\omega_1,\omega_1,\omega_1^*,\omega_1^*,\omega_1,\omega_1^*)$ with $m=3$ and $n=6$. \label{first example}}
\end{minipage}
\end{figure}

\subsection{Polygon Configuration Spaces}\label{sec:intro geometry}
In \S \ref{geometry} we relate affine growth diagrams to configuration spaces of polygons with minuscule side lengths in the affine Grassmannian. See \S\ref{background} for the definitions briefly introduced here. The affine Grassmannian is the quotient $Gr=GL_m(\mathcal K)/GL_m(\mathcal O)$ where $\mathcal K=\mathbb C((t))$ and $\mathcal O=\mathbb C[[t]]$, and is a direct limit of varieties of increasing dimension. The affine Grassmannian has a metric $d(g,h)$, in the sense of Kapovich, Leeb, and Milson \cite{KLM1, KLM2, KLM3}, that takes values in dominant $GL_m$ weights and satisfies $d(g,h)=d(h,g)^*$. For a fixed sequence of minuscule weights $\vec\lambda=(\lambda^1,\ldots,\lambda^n)$ the set of polygon configurations,
\begin{align*}
\text{Poly}(\vec\lambda)=\left\{(g_1=g_{n+1},g_2,\ldots,g_{n})\in Gr^{n}\mid d(g_i,g_{i+1})=\lambda^i, g_1=[1]=g_{n+1}\right\},
\end{align*}
forms a reducible algebraic variety, which we call \emph{the polygon space}. It is shown to be equidimensional in \cite{Hai2}.

These configuration spaces arise in the geometric Satake correspondence of Lusztig \cite{Lus}, Ginzburg \cite{Gin}, Beilinson--Drinfeld \cite{BD}, and Mirkovi\'c--Vilonen \cite{MV}. The geometric Satake correspondence is an equivalence between the tensor category of perverse sheaves on Gr and the tensor category of representations of the Langlands dual group, which is $GL_m$ in this case. In general each $\lambda^i$ can be any dominant weight, not necessarily minuscule, and the polygon space is usually called the \emph{Satake fiber} or \emph{convolution variety}. Under this equivalence the top Borel--Moore homology of the Satake fiber for $\vec\lambda$ is isomorphic to the invariant space $(V_{\lambda^1}\otimes \cdots\otimes V_{\lambda^n})^{GL_m}$ of the corresponding irreducible representations of $GL_m$. Here $V_\lambda$ denotes the finite-dimensional irreducible representation of $GL_m$ of highest weight $\lambda$. The classes of the components form the \emph{Satake basis}. 

This paper is concerned with the combinatorics of the pairwise distances $d(g_i,g_j)$ in the case that each $\lambda^i$ is a minuscule weight. The main result of \S \ref{geometry} is Theorem \ref{thm:geometry theorem}, reproduced here.
\begin{Thm*}
Let $Z$ be a component of Poly$(\vec\lambda)$ and $p=(g_1=[1]=g_{n+1},g_1,\ldots,g_n)$ a generic point in $Z$. The dominant weights $\gamma_{i,j}=d(g_{\overline{i}},g_{\overline{j}})$ for $(i,j)$ in $St_n$ form an affine growth diagram of type $\vec\lambda$ and do not depend on the choice of generic $p$ in $Z$. Furthermore, this is a bijection, that is, the affine growth diagrams of type $\vec\lambda$ index the components of Poly$(\vec\lambda)$.
\end{Thm*}
The indices $(i,j)$ range over $St_n$, so they are taken modulo $n$ in $g_i$ and $g_j$. There are two consequences of the geometric interpretation of affine growth diagrams. For any affine growth diagram the rows are periodic with period $n$ and the dual symmetry statement $\gamma_{i,j}^*=\gamma_{j,i+n}$ follows from $d(g_i,g_j)=d(g_j,g_i)^*$. This is Corollary \ref{cor:symmetry}. We apply these two facts in \S \ref{combinatorics} to study the combinatorics of these diagrams. 

With some work one can see that the first part of the theorem is implicit in the geometric arguments of Fontaine and Kamnitzer \cite{FK}. Their results however are stated in the setting of the cyclic sieving phenomenon and are formulated in terms of the action of a rotation operator $R:\text{Poly}(\lambda^1,\ldots,\lambda^n)\rightarrow \text{Poly}(\lambda^2,\ldots,\lambda^n,\lambda^1)$ on the Satake basis. We are instead concerned with the components of a fixed polygon space Poly$(\vec\lambda)$ and give a proof of the above theorem using Knutson--Tao hives. The argument is entirely combinatorial once we recall the result of Goncharov--Shen \cite{GS} that gives a bijection between the components of Poly$(\vec\lambda)$ and hives of type $\vec\lambda$. 

Another motivation for understanding the combinatorics of the weight-valued distances is the work of Fontaine, Kamnitzer, and Kuperberg \cite{FKK}. They study the relation between the Satake basis and Kuperberg's basis of non-elliptic webs \cite{Kup}, which is a different basis for the invariant space $(V_{\lambda^1}\otimes \cdots \otimes V_{\lambda^n})^{GL_3}$ for $\lambda^i$ minuscule. They view an element of $Gr$ as a vertex of the corresponding Bruhat--Tits building, an infinite dimensional simplicial complex. They then use the CAT(0) geometry of the Bruhat--Tits building and embeddings of polygon configurations to show that the change of basis matrix between the Satake basis and the non-elliptic web basis is upper-unitriangular. We hope that understanding the metric properties of polygon configurations in a rotation-invariant manner can help understand Kuperberg webs in higher rank, along the lines of \cite{FKK}. Note that in higher rank, there is no known, rotation-invariant construction of minimal web bases. See \cite{Fon} for a construction of a web basis that is not rotation invariant, nor minimal. 

We briefly sketch the proof of the main theorem of \S 2. Goncharov and Shen \cite{GS} show that components of the polygon space Poly$(\vec\lambda)$ are in bijection with $n$-hives of type $\vec\lambda$ via an explicit constructible function defined by Kamnitzer in \cite{Kam}. An $n$-hive is an assignment of integers to the following set of lattice points in an $(n-1)$-dimensional tetrahedron of size $m$ with some additional conditions.
\begin{align*}
\Delta_m^n=\{(i_1,\ldots,i_n)\in \mathbb Z_{\geq 0}^n\mid i_1+\cdots i_n=m\}
\end{align*}
The precise conditions satisfied by the integers are given in \S\ref{sec:hives}, one of which is the octahedron recurrence for every unit octahedron \cite{RR}, as depicted in Figure \ref{fig:octahedron}. In \S\ref{sec:Kamnitzer function} we recall Kamnitzer's constructible function from the polygon space Poly$(\vec\lambda)$ to $\mathbb Z^{\Delta_m^n}$. It was conjectured in \cite{Kam} that the generic value of this function on each component of the Satake fiber is a Knutson--Tao hive of type $\vec\lambda$ and that this is a bijection. This was subsequently proved in \cite{GS}, building on the work of \cite{FG}. 

\begin{figure}[b]
\begin{center}
\begin{tikzpicture}[line join=bevel,z=-7]
\coordinate (A1) at (0,0,-1);
\coordinate (A2) at (-1,0,0);
\coordinate (A3) at (0,0,1);
\coordinate (A4) at (1,0,0);
\coordinate (B1) at (0,1,0);
\coordinate (C1) at (0,-1,0);

\node at (A1) {b};
\node[left] at (A2) {a};
\node[below,left] at (A3) {d};
\node[right] at (A4) {c};
\node[above] at (B1) {e};
\node[below] at (C1) {f=max(a+c,b+d)-e};
\draw[dotted] (A1) -- (A2) -- (B1) -- cycle;
\draw[dotted] (A4) -- (A1) -- (B1) -- cycle;
\draw[dotted] (A1) -- (A2) -- (C1) -- cycle;
\draw[dotted] (A4) -- (A1) -- (C1) -- cycle;
\draw (A2) -- (A3) -- (B1) -- cycle;
\draw (A3) -- (A4) -- (B1) -- cycle;
\draw (A2) -- (A3) -- (C1) -- cycle;
\draw (A3) -- (A4) -- (C1) -- cycle;
\end{tikzpicture}
\caption{The octahedron recurrence. \label{fig:octahedron}}
\end{center}
\end{figure}

Using Kamnitzer's function we observe in Proposition \ref{prop:hive edges} (also noted in \cite{LO}) that the differences of consecutive labels along the edges of an $n$-hive give the generic distances $d(g_i,g_j)$ of the corresponding component. By edge label we mean a hive value for an index $(i_1,\ldots,i_n)$ with at most two nonzero entries. To prove the main theorem it then suffices to analyze the edges of the corresponding $n$-hives. It is immediate from the definition of Poly$(\vec\lambda)$ that $d(g_i,g_{i+1})=\lambda^i$ for all $i$. That the differences $d(g_i,g_j)-d(g_i,g_{j+1})$ and $d(g_i,g_j)-d(g_{i+1},g_j)$ are vertical strips also follows easily from the hive interpretation. 

The main technical part is in proving that the distances $d(g_i,g_j)$ satisfy the local condition (\ref{local condition}). To do so it suffices to analyze $4$-subhives that have two opposing minuscule edges. This is done by repeated application of the octahedron recurrence to determine the values on the bottom two faces of the $4$-hive given the value on the top two faces. See Figures \ref{quadrilateral} and \ref{fig:excavate}. The same method is used in \cite{KT, HK1, Zin}. The proof is elementary and combinatorial, but somewhat involved so is relegated to \S \ref{main proof}. See also \cite{HK2}, where Henriques and Kamnitzer define a tensor category in terms of hives and show that it is equivalent to the category of $\mathfrak{gl}_n$-crystals.

To prove bijectivity we show that the weights $\gamma_{1,i}=d(g_1,g_i)$ for $1\leq i\leq n$ given by the edge labels of the corresponding $n$-hive form a minuscule path of type $\vec\lambda$, and are enough to recover the entire $n$-hive. A \emph{minuscule path} of type $\vec\lambda$ is a sequence of dominant weights $(\mu^1=\vec{0}, \mu^2,\ldots,\mu^{n},\mu^{n+1}=\vec{0})$ such that $\mu^i-\mu^{i-1}=w\cdot\lambda^i$ for some $w\in S_n$. It is well known that the number of minuscule paths of type $\vec\lambda$ is equal to $\dim(V_{\lambda^1}\otimes\cdots\otimes V_{\lambda^n})^{GL_m}$. These weights label the vertices on the first horizontal line of the corresponding affine growth diagram. We show that the first line of any affine growth diagram of type $\vec\lambda$ must form a minuscule path of type $\vec\lambda$. We then show that the local condition is reversible, meaning that $\gamma_{i,j}=\text{sort}(\gamma_{i+1,j}+\gamma_{i,j+1}-\gamma_{i+1,j+1})$ (this was already proved in \cite{vLee2}). This implies that an affine growth diagram can be recovered from the first horizontal line by application of the local condition to the southeast and to the northwest. This establishes bijectivity between affine growth diagrams of type $\vec\lambda$ and components of Poly$(\vec\lambda)$. 

It was already shown in \cite{FKK} by geometric arguments that the components of Poly$(\vec\lambda)$ can be indexed by minuscule paths by measuring the distances $d(g_1,g_i)$ for all $1\leq i\leq n$. More generally, let $(i_1,j_1),\ldots,(i_{n+1},j_{n+1})$ be a path in $St_n$ such that $(i_1,j_1)=(i,i)$ for some $i$ and $(i_k,j_k)-(i_{k+1},j_{k+1})=(1,0)$ or $(0,-1)$ for all $k$. This is a path through $St_n$ consisting of eastward and northward steps. Then an affine growth diagram can be recovered from the weights $\gamma_{i_k,j_k}$ along this path. Hence, any such path specifies a way to index the components of Poly$(\vec\lambda)$. In this sense affine growth diagrams are overdetermined, but are rotationally invariant since they measure all pairwise distances. We give a proof of bijectivity using the $n$-hive as discussed rather than appealing to \cite{FKK}.

As mentioned, Fontaine and Kamnitzer \cite{FK} also studied rotation of the Satake basis in the context of the cyclic sieving phenomenon. Given a sequence of minuscule weights $\vec\lambda=(\lambda^1,\ldots,\lambda^n)$ for any complex semisimple $G$, define the rotated sequences by $\lambda^{(i)}=(\lambda_{1+i},\ldots,\lambda_n,\lambda_1,\ldots,\lambda_i)$. They defined a geometric rotation $R:\text{Poly}(\vec\lambda)\rightarrow \text{Poly}(\vec\lambda^{(1)})$ that takes components to components. They also define a rotation on minuscule paths, so that for a minuscule path $\vec\mu$ of type $\vec\lambda$, the minuscule path $R(\mu)$ is of type $\lambda^{(1)}$. Given a component $Z_{\vec\mu}$ of Poly$(\vec\lambda)$ indexed by a minuscule path $\vec\mu$ of type $\vec\lambda$, they show that $R(Z_{\vec\mu})=Z_{R(\vec\mu)}$. In the present $GL_m$ setting, applying the local condition at a single unit square to the a minuscule path labelling the first line of an affine growth diagram can be seen as a refinement of the procedure given in \cite{FK}. They observe that when $G=SL_n$ a minuscule path can be interpreted as a rectangular column-strict tableau. Under this equivalence the rotation of minuscule paths in \cite{FK} is promotion on rectangular tableaux. See \S\ref{sec:crystals} for an interpretation in terms of crystals. 

Polygon configurations also play a role in the study of higher laminations and higher Teichm\"uller spaces of Fock--Goncharov  \cite{FG}. Polygon configurations in the affine Grassmannian can be thought of as the tropicalization of the configuration space of principal affine flags. Certain positive configurations are used to define higher laminations, as described in \cite{GS}, and \cite{Le1}. We only briefly discuss this point of view as motivation.

Let $\mathcal A=GL_m/U$ be the variety of principal flags where $U$ is the subgroup of unipotent upper-triangular matrices. Such a flag is given by an ordered basis $v_1,\ldots,v_m$ together with volume forms $v_1\wedge\cdots v_k$, such that the span of $v_1,\ldots, v_k$ is the $k$-dimensional subspace of the flag and $v_1\wedge\cdots v_k$ is a volume form on the subspace spanned by $v_1,\ldots, v_k$. Two bases represent the same flag if they give the same $k$-forms for all $k\leq m$. Consider the configuration space of $n$ principal flags up to left diagonal action, $Conf_n(\mathcal A)=G\backslash(G/U)^n$. Let $F_1,\ldots,F_n$ be flags where $F_i$ has basis $v_{i1},\ldots,v_{im}$. For each $(i_1,\ldots,i_n)$ define the Fock--Goncharov functions
\begin{align*}
h_{i_1,\ldots,i_n}(F_1,\ldots,F_n)=\det(v_{11},\ldots,v_{1i_1},\ldots,v_{n1},\ldots,v_{ni_n}).
\end{align*}
If all but three of the indices $i_j$ are zero, then the function depends on three flags, and such a function is called a face function. 

It was shown in \cite{FG} that $Conf_n(\mathcal A)$ has a cluster structure. Consider the flags as labelling the vertices of an $n$-gon and fix a triangulation of the $n$-gon into $n-2$ triangles. The face functions corresponding to the triangles of the fixed triangulation give coordinates on $Conf_n(\mathcal A)$ that form a cluster. Clusters corresponding to different triangulations can be reached by certain sequences of mutations, each of which is an application of the (nontropical) octahedron recurrence (see \cite{FG, FL} for a description of the associated quiver). Kamnitzer's function can be viewed as a tropicalization of the Fock--Goncharov coordinates on the configuration space of $n$ principal flags. Our proof technique to establish that the local condition holds in Theorem A can be interpreted as an explicit analysis of a sequence of cluster mutations in the tropical setting. 

Higher laminations are studied in  \cite{Le2, LO} by interpreting the tropicalized functions in terms of the geometry of minimal spanning networks of polygon configurations in the affine building. See \cite{Le1, Le2, LO, FL} for further details on this perspective. 

\subsection{Classical Growth Diagrams and the Robinson--Schensted Correspondence}
\S \ref{combinatorics} concerns the purely combinatorial study of affine growth diagrams. The first main result of \S \ref{combinatorics} is that the classical Fomin growth diagrams, and hence the Robinson--Schensted correspondence, appear within affine growth diagrams when $\vec\lambda=(\omega_1,\ldots,\omega_1,\omega_1^*,\ldots,\omega_1^*)$. Recall that the \emph{Robinson--Schensted correspondence} is a bijection between permutations in $S_k$ and pairs of same-shape standard Young tableaux with $k$ boxes. 

Fomin growth diagrams are one of many ways to realize the Robinson--Schensted correspondence and are defined as follows (see for example \cite{Sag}). For a permutation $\pi \in S_k$ consider the corresponding $k\times k$ permutation matrix with the $i$th row containing a $1$ in position $\pi(i)$. View this permutation matrix inside of a $k\times k$ grid of squares. Each vertex of the grid will be labelled by a partition, which we identify with its Young diagram. Begin by labelling the vertices along the top and left boundary with the empty Young diagram. The remaining vertex labels are filled in one-by-one from northwest to southeast according to the following local rules applied at each unit square. Let $\alpha,\beta,\gamma$ be three of the partitions labelling a unit square as in the following diagram. 
\begin{align}
\begin{array}{|cc|c}
\cline{1-2}
\alpha&&\beta\\
&  & \\
\cline{1-2}
 \multicolumn{1}{c}{\gamma}&\multicolumn{2}{r}{\delta} \label{square setup}
\end{array}
\end{align}

Then $\delta$ is determined from $\alpha,\beta,\gamma$ according to the following local rules \cite{Fom}. These local rules will not be very important for our purposes, as we will recall a more global interpretation, but we list them for completeness. Here $\alpha\prec \beta$ means that $\alpha$ precedes $\beta$ in Young's lattice of partitions.
\begin{Rem}
Since affine growth diagrams are defined in terms of vertical strips, the local rules that we state below are transpose of the usual rules for Fomin growth diagrams. The rules stated here give the column-insertion Robinson--Schensted correspondence.
\end{Rem}
\begin{enumerate}
\item
If $\beta\not=\gamma$, then $\delta=\beta\cup\gamma$.
\item
If $\alpha \prec \beta=\gamma$, then $\beta$ must have been obtained from $\alpha$ by adding a box in some column $i$. Let $\delta$ be obtained from $\beta=\gamma$ by adding a box in column $i+1$.
\item
If $\alpha=\beta=\gamma$, then let $\delta$ be obtained from $\alpha$ by adding a box in the first column if the current square contains a $1$, and otherwise let $\delta=\alpha$.
\end{enumerate}
Reading the partitions left to right along the bottom row and down the right column gives a pair of chains of partitions ending at a common Young diagram. This information is equivalent to a pair of same-shape standard Young tableaux. Conversely, starting with such a pair labelling an empty $k\times k$ grid, the diagram can be filled-in from southeast to northwest, one square at a time by reversing the local rules, and hence revealing the $1$ entries of the corresponding permutation matrix.

Here is a full example with $\pi=1423$ corresponding to $P(\pi)=\scalebox{.6}{\begin{ytableau}1&4\\2\\3\end{ytableau}},Q(\pi)=\scalebox{.6}{\begin{ytableau}1&3\\2\\4\end{ytableau}}$.
\begin{align*}
\scalemath{.5}{
\Ylinethick1.2pt
\begin{array}{|cc|cc|cc|cc|c}
\cline{1-8}
 \emptyset&& \emptyset&   & \emptyset& & \emptyset& &\emptyset\\
  &1& & & & & & & \\
\cline{1-8}
  \emptyset&&&&&&&&\\
    && & & & & & 1& \\
\cline{1-8}
 \emptyset&&&&&&&&\\
   && & 1& & & & & \\
\cline{1-8}
 \emptyset&&&&&&&&\\
  && & & & 1& & & \\
\cline{1-8}
 \multicolumn{1}{c}{\emptyset}& \multicolumn{2}{r}{}& \multicolumn{2}{r}{}& \multicolumn{2}{r}{}& \multicolumn{2}{r}{}
\end{array}
\hspace{8mm}
\begin{array}{|cc|cc|cc|cc|c}
\cline{1-8}
 \emptyset&& \emptyset&   & \emptyset& & \emptyset& &\emptyset\\
  &1& & & & & & & \\
\cline{1-8}
  \emptyset&&\scalebox{.5}{\yng(1)}&&\scalebox{.5}{\yng(1)}&&\scalebox{.5}{\yng(1)}&&\scalebox{.5}{\yng(1)}\\
    && & & & & & 1& \\
\cline{1-8}
 \emptyset&&&&&&&&\\
   && & 1& & & & & \\
\cline{1-8}
 \emptyset&&&&&&&&\\
  && & & & 1& & & \\
\cline{1-8}
 \multicolumn{1}{c}{\emptyset}& \multicolumn{2}{r}{}& \multicolumn{2}{r}{}& \multicolumn{2}{r}{}& \multicolumn{2}{r}{}
\end{array}
\hspace{8mm}
\begin{array}{|cc|cc|cc|cc|c}
\cline{1-8}
 \emptyset&& \emptyset&   & \emptyset& & \emptyset& &\emptyset\\
  &1& & & & & & & \\
\cline{1-8}
  \emptyset&&\scalebox{.5}{\yng(1)}&&\scalebox{.5}{\yng(1)}&&\scalebox{.5}{\yng(1)}&&\scalebox{.5}{\yng(1)}\\
    && & & & & & 1& \\
\cline{1-8}
 \emptyset&&\scalebox{.5}{\yng(1)}&&\scalebox{.5}{\yng(1)}&&\scalebox{.5}{\yng(1)}&&\scalebox{.5}{\yng(1,1)}\\
   && & 1& & & & & \\
\cline{1-8}
 \emptyset&&&&&&&&\\
  && & & & 1& & & \\
\cline{1-8}
 \multicolumn{1}{c}{\emptyset}& \multicolumn{2}{r}{}& \multicolumn{2}{r}{}& \multicolumn{2}{r}{}& \multicolumn{2}{r}{}
\end{array}
\hspace{8mm}
\begin{array}{|cc|cc|cc|cc|c}
\cline{1-8}
 \emptyset&& \emptyset&   & \emptyset& & \emptyset& &\emptyset\\
  &1& & & & & & & \\
\cline{1-8}
  \emptyset&&\scalebox{.5}{\yng(1)}&&\scalebox{.5}{\yng(1)}&&\scalebox{.5}{\yng(1)}&&\scalebox{.5}{\yng(1)}\\
    && & & & & & 1& \\
\cline{1-8}
 \emptyset&&\scalebox{.5}{\yng(1)}&&\scalebox{.5}{\yng(1)}&&\scalebox{.5}{\yng(1)}&&\scalebox{.5}{\yng(1,1)}\\
   && & 1& & & & & \\
\cline{1-8}
 \emptyset&&\scalebox{.5}{\yng(1)}&&\scalebox{.5}{\yng(1,1)}&&\scalebox{.5}{\yng(1,1)}&&\scalebox{.5}{\yng(2,1)}\\
  && & & & 1& & & \\
\cline{1-8}
 \multicolumn{1}{c}{\emptyset}& \multicolumn{2}{r}{}& \multicolumn{2}{r}{}& \multicolumn{2}{r}{}& \multicolumn{2}{r}{}
\end{array}
\hspace{8mm}
\begin{array}{|cc|cc|cc|cc|c}
\cline{1-8}
 \emptyset&& \emptyset&   & \emptyset& & \emptyset& &\emptyset\\
  &1& & & & & & & \\
\cline{1-8}
  \emptyset&&\scalebox{.5}{\yng(1)}&&\scalebox{.5}{\yng(1)}&&\scalebox{.5}{\yng(1)}&&\scalebox{.5}{\yng(1)}\\
    && & & & & & 1& \\
\cline{1-8}
 \emptyset&&\scalebox{.5}{\yng(1)}&&\scalebox{.5}{\yng(1)}&&\scalebox{.5}{\yng(1)}&&\scalebox{.5}{\yng(1,1)}\\
   && & 1& & & & & \\
\cline{1-8}
 \emptyset&&\scalebox{.5}{\yng(1)}&&\scalebox{.5}{\yng(1,1)}&&\scalebox{.5}{\yng(1,1)}&&\scalebox{.5}{\yng(2,1)}\\
  && & & & 1& & & \\
\cline{1-8}
 \multicolumn{1}{c}{\emptyset}& \multicolumn{2}{r}{\scalebox{.5}{\yng(1)}}& \multicolumn{2}{r}{\scalebox{.5}{\yng(1,1)}}& \multicolumn{2}{r}{\scalebox{.5}{\yng(1,1,1)}}& \multicolumn{2}{r}{\scalebox{.5}{\yng(2,1,1)}}
\end{array}}
\end{align*}

The main result of \S\ref{combinatorics} is Theorem \ref{thm:Fomin contained}, reproduced here. For a dominant weight $\gamma$, let $\gamma^+$ (resp. $\gamma^-$) be the weight consisting of the positive (resp. negative) entries of $\gamma$. For example, for the dominant weight $\gamma=(3,1,1,0,0,-2,-4)$, we have $\gamma^+=(3,1,1,0,0,0,0)$ and $\gamma^-=(0,0,0,0,0,-2,-4)$. Interpret each as a partition. See Figure \ref{fig:RS} for an example of this theorem for $k=3$.
\begin{Thm**}
Let $n=2k$ and $\vec\lambda=(\omega_1,\ldots,\omega_1,\omega_1^*,\ldots,\omega_1^*)$ where $\omega_1$ and $\omega_1^*$ each appear $k$ times. Let $\{\gamma_{i,j}\}_{(i,j)\in St_n}$ be an affine growth diagram of type $\vec\lambda$ for $m\geq k$. Then the partitions $\gamma_{i,j}^+$ for $1\leq i\leq k+1$ and $k+1\leq j\leq n+1$ form a Fomin growth diagram, growing from the southeast to the northwest, and the partitions $\gamma_{i,j}^-$ for the same indices form a Fomin growth diagram, growing from the northwest to the southeast. Similarly for the indices $k+1 \leq i\leq n+1$ and $n+1\leq j\leq n+k+1$ the partitions $\gamma_{i,j}^+$ form a Fomin growth diagram from the northwest to southeast and the $\gamma_{i,j}^-$ from southeast to northwest.
\end{Thm**} 

To prove this theorem we actually prove something stronger. Recall the following Sundaram bijection, attributed to Stanley in \cite{Sun}, and also studied in \cite{Rob}. An \emph{oscillating tableaux} is a sequence of partitions $\left(\mu^1=\vec{0},\mu^2,\ldots,\mu^{n},\mu^{n+1}=\vec{0}\right)$ such that for all $i$ the partitions $\mu^i$ and $\mu^{i+1}$ differ by exactly one box.

\begin{Thm}[\cite{Sun, Rob}]\label{thm:Sundaram}
For $n$ even, there is a bijection between fixed-point-free involutions in $S_n$ and length $n$ oscillating tableaux.
\end{Thm}

Fixed-point-free involutions in $S_n$ can be viewed as $n\times n$ symmetric permutation matrices with zeros on the diagonal. If an oscillating tableau $\left(\mu^1=\vec{0},\mu^2,\ldots,\mu^{n},\mu^{n+1}=\vec{0}\right)$ increases for the first $k=n/2$ steps, then the pair of chains $(\vec{0}=\mu^1,\ldots, \mu^{k+1})$ and $(\vec{0}=\mu^{m+1},\ldots,\mu^{k+1})$ can be interpreted as a pair of standard Young tableau of same shape $\mu^k$. Restricted to such oscillating tableaux, the Sundaram bijection maps them to permutations of $S_k$ embedded in $n\times n$ symmetric permutation matrices as the $k\times k$ northeastern submatrix. 
\begin{Rem}
This is not exactly the RS-correspondence. For $\pi\in S_k$, if $\pi$ corresponds to $ \left(P(\pi),Q(\pi)\right)$ under the column-insertion RS-correspondence, then the Sundaram bijection sends $\pi$ (thought of as embedded in an $n\times n$ symmetric fixed-point-free matrix) to $\left(ev(P(\pi)),Q(\pi)\right)$ (thought of as an oscillating tableu) where $ev(P(\pi))$ is the evacuation tableau of $P(\pi)$.
\end{Rem}

In \S\ref{sec:combinatorial background} we recall Greene's theorem, which assigns a partition to a partially ordered set. In our setting this will be applied to partially ordered sets defined by partial permutation matrices. We then recall the interpretation of the partitions in Fomin growth diagrams in terms of Greene's theorem.

In \S\ref{sec:filling} we give a procedure for assigning a natural number to each unit square of an affine growth diagram without any restrictions on $\vec\lambda$. We then specialize to the case that each $\lambda^i$ is $\omega_1$ or $\omega_1^*$ in which case the entries are all $0$ and $1$. When $m$ is large enough for fixed $n$, the periodicity of the diagram and dual symmetry imply that the resulting $0,1$ entries can be interpreted as an $n\times n$ symmetric permutation matrix. 

In \S\ref{sec:main combinatorial theorem}, starting with such a matrix, we give a procedure to construct an affine growth diagram whose first line is labelled by an oscillating tableaux. The construction defines the weight $\gamma_{i,j}$ at each vertex of $St_n$ by adding a positive partition $\gamma_{i,j}^+$ and a negative partition $\gamma_{i,j}^-$ where the negative partition is thought of as a dominant weight with negative entries. The two partitions $\gamma_{i,j}^+$ and $\gamma_{i,j}^-$ are defined by applying Greene's theorem to two certain submatrices of the staircase diagram depending on the vertex $(i,j)$. This construction may be of independent interest. 

Finally, we show that these two constructions are inverses when $m\geq n/2$, thereby recovering the Sundaram bijection. This is Theorem \ref{thm:bijection}. Theorem \ref{thm:Fomin contained} follows since Fomin diagrams can be interpreted using Greene's theorem.

Recall that the \emph{Robinson--Schensted--Knuth correspondence} is a generalization of the Robinson--Schensted correspondence. It is a bijection between $n\times n$ matrices with natural-number entries and pairs of same-shape semistandard Young tableaux with entries from $[n]$. Fomin diagrams also have a generalization to this setting in the sense of Roby \cite{Rob}. 

In \S\ref{sec:RSK} we drop the assumption that each $\lambda^i$ is $\omega_1$ or $\omega_1^*$ to show that affine growth diagrams also contain these generalized Fomin growth diagrams when $\vec\lambda$ consists of a sequence of fundamental weights, followed by dual-fundamental weights, thereby recovering the full RSK-correspondence. This is Theorem \ref{thm:generalized Fomin}, which we don't reproduce here. As with the RS-correspondence, this will be a special case of the most general setting where there are no restrictions on $\vec\lambda$, other than $m$ being large enough with respect to fixed $\vec\lambda$. 

\subsection{Relation to Crystals}\label{sec:crystals}
The local condition (\ref{local condition}) arises in the context of crystals, first appearing in \cite{vLee2} where it is called the local move. For now let $\mathfrak{g}$ be a semisimple finite-dimensional complex Lie algebra. Recall that for any two $\mathfrak{g}$-crystals $B$ and $C$ if $b\otimes c$ is a highest (resp. lowest) weight crystal element in $B\otimes C$, then $b$ is highest weight in $B$ (resp. $c$ is lowest weight in C). Henriques and Kamnitzer \cite{HK1} define a commutor $\sigma_{B,C}:B\otimes C\rightarrow C\otimes B$ for any two crystals $B$ and $C$ by $\sigma_{B,C}(b\otimes c)=\xi(\xi(c)\otimes \xi(b))$. Here $\xi:B\rightarrow B$ is the Lusztig involution such that such that $\xi(b)$ is highest (resp. lowest) weight if $b$ is lowest (highest) weight and $\xi(b)$. For the tableaux model of $\mathfrak{sl}_m$-crystals this is just Sch\"utzenberger evacuation on tableaux. 

Let $B(\lambda)$ be the highest weight $\mathfrak{g}$-crystal of highest weight $\lambda$. Recall that the Littelmann path model \cite{Lit} is a model for crystals. In this setting, van Leeuwen \cite{vLee2} realized the commutor $\sigma_{B(\lambda),B(\mu)}:B(\lambda)\otimes B(\mu)\rightarrow B(\mu)\otimes B(\lambda)$ as an iterated application of the local move by embedding $B(\lambda)$ and $B(\mu)$ into tensor products of minuscule highest weight crystals. For weights $\lambda,\mu,\nu,\rho$ for general $\mathfrak{g}$ the local rule is $\rho=\text{dom}_W(\lambda+\mu-\nu)$ where the operator dom$_W$ returns the dominant representative in the Weyl group orbit. For $\mathfrak{g}=\mathfrak{sl}_m$ Littelmann paths can be identified with tableaux, in which case the local rule is \emph{jeu de taquin}. See \cite{Len1,Len2} for a nice discussion of the commutor and local moves.

For the present context we briefly summarize the discussion in \cite[section 2.3]{FK}. For a sequence of minuscule weights let $B(\vec\lambda)=B(\lambda^1)\otimes\cdots\otimes B(\lambda^n)$. Let $B(\vec\lambda)^{\mathfrak g}$ be the set of crystal elements that are both highest weight and lowest weight, which is equal to the set of minuscule paths of type $\vec\lambda$. For $b_1\otimes\cdots\otimes b_n\in \left(B(\lambda^1)\otimes \left(B(\lambda^2)\otimes \cdots \otimes B(\lambda^n)\right)\right)^{\mathfrak{g}}$, we have that $b_1$ is highest weight and $b_2\otimes\cdots\otimes b_n$ is lowest weight. Fontaine and Kamnitzer \cite{FK} define a rotation operation on $B(\vec\lambda)^{\mathfrak{g}}$ by $R(b_1\otimes \cdots\otimes b_n)=\xi(b_2\otimes\cdots\otimes b_n)\otimes \xi(b_1)$. The rotation map can also be written as $R(b_1\otimes\cdots\otimes b_n)=\sigma_{B(\lambda^1),B(\lambda^2)\otimes\cdots\otimes B(\lambda^n)}(b_1\otimes\left(b_2\otimes\cdots\otimes b_n\right))$. They then show that the bijection between minuscule paths and $B(\vec\lambda)^{\mathfrak g}$ intertwines the rotation of minuscule paths and crystal elements. They finally observe that in the $\mathfrak{sl}_m$ case, minuscule paths are in bijection with row-strict semistandard tableaux and the rotation operation becomes promotion of tableaux. 

The promotion operation can be decomposed into a sequence of Bender--Knuth operations and this is precisely the local move at each unit square. See \cite{Wes} for a complete discussion of the commutor, local moves, crystals, and their relation to growth diagrams. We have chosen to focus on $\mathfrak{gl}_m$ minuscule paths rather than tableaux since this leads to the classical Fomin growth diagrams of \S\ref{combinatorics}. 

As mentioned previously, \cite{HK2} define a coboundary tensor category in terms of hives with an associator and commutor defined in terms of a modified octahedron recurrence. They then give an equivalence with the tensor category of $\mathfrak{gl}_m$-crystals that respects the associator and commutor. Although we have not done so, it should be possible to establish Theorem A by following the Littelmann path model for crystals along this equivalence of categories, taking their definition of the commutor in terms of the modified octahedron recurrence, and applying the decomposition of the commutor in terms of local moves as in \cite{vLee2}. 

\subsection{Acknowledgements}
I would like to thank Allen Knutson for suggesting the question that led to this work and for many helpful discussions. I would also like to thank Pasha Pylyavskyy, David Speyer and Alex Yong for helpful conversations. Finally, I would like to thank Bruce Westbury for pointing out the occurrence of the local condition in \cite{vLee2}. I thank the anonymous referee for a careful reading of this paper. An extended abstract of these results appeared in \cite{Akh}.

\section{Polygon Configurations in the Affine Grassmannian \label{geometry}}
In \S2.1 we recall the definitions of the affine Grassmannian and polygon spaces. In \S2.2 we recall the definition of Knutson--Tao hives. In \S2.3 we define Kamnitzer's function and state the result of \cite{GS} that this function gives a bijection between hives and components of the polygon space. We then show in Proposition \ref{prop:hive edges} that the edge values of the hives give the distances $d(g_i,g_j)$, thereby establishing a link between the geometry of the polygon space and the combinatorics of hives. In \S2.4 we use this result to prove our main theorem, the bijection between affine growth diagrams and components of the polygon space for minuscule weights. 

\subsection{The Affine Grassmannian and Polygon Spaces \label{background}}
Let $\mathcal K=\mathbb C((t))$ and $\mathcal O=\mathbb C[[t]]$. The \emph{affine Grassmannian} is the quotient
\begin{align*}
Gr=GL_m(\mathcal K)/GL_m(\mathcal O)
\end{align*}
and has the structure of an ind-variety. The points of $Gr$ correspond to finitely generated rank-$m$ $\mathcal O$-submodules in $\mathcal K^m$ which are also called lattices. The group $GL_m(\mathcal K)$ acts on the space of lattices and the stabilizer of any lattice is isomorphic to $GL_m(\mathcal O)$. See \cite{AG2} for a nice exposition of this interpretation of the points as lattices.

The affine Grassmannian has a coweight-valued metric, defined as follows. Recall that the coweight lattice of $GL_m$ is $\Lambda=\Hom(\mathbb G_m,T)\cong \mathbb Z^m$ where $T$ is a maximal torus. A coweight $\mu=(\mu_1,\ldots,\mu_m)$ in $\Lambda$ corresponds to the element $\mu(t)$ in $T(\mathcal K)\subset GL_m(\mathcal K)$ that has $t^{\mu_i}$ along the diagonal. Let $t^\mu$ denote the projection of $\mu(t)$ to the quotient $Gr$. The cone of dominant coweights is $\Lambda_+=\{(\lambda_1,\ldots,\lambda_m)\in \mathbb Z^m\mid \lambda_1\geq \cdots \geq \lambda_m\}$. The double cosets 
\begin{align*}
GL_m(\mathcal O)\backslash GL_m(\mathcal K)/GL_m(\mathcal O)
\end{align*}
are indexed by dominant coweights of $GL_m$, that is, the elements $t^\mu$ for $\mu\in \Lambda_+$ form representatives for the left cosets of $GL_m(\mathcal O)$ on $Gr$. The orbits $GL_m(\mathcal O)t^\mu$ for $\mu\in \Lambda_+$ form a stratification of $Gr$.

For any group $G$ and subgroup $H$, the left cosets on $G/H$ are in bijection with the cosets of $G$ on $G/H\times G/H$. In the present setup of $G=GL_m(\mathcal K)$ and $H=GL_m(\mathcal O)$ this implies that any pair of points $(p,q)$ in $Gr$ can be moved via the $GL_m(\mathcal K)$ action to the pair $(t^0,t^\lambda)$ for a unique dominant coweight $\lambda$. In this case, the distance from $p$ to $q$ is defined to be $\lambda$, denoted $d(p,q)=\lambda$. The distance from $q$ to $p$ is given by the dual coweight, $d(q,p)=\lambda^*$, where the dual of a coweight $\lambda=(\lambda_1,\ldots,\lambda_m)$ is the negation of the reversal, that is $\lambda^*=(-\lambda_m,\ldots,-\lambda_1)$. We have stated everything in terms of $GL_m$ for simplicity, but the above definitions and the following theorem hold for any complex connected reductive group $G$. Note that we will identify the weight and coweight lattices of $GL_m$, so in the remainder of the paper we speak of $GL_m$ weights.

The polygon variety arises in the geometric Satake correspondence \cite{Lus}, \cite{Gin}, \cite{BD}, \cite{MV}, which we briefly recall now for motivation even though we do not make explicit use of these results. Let $G$ be a connected reductive group over $\mathbb C$ and $Gr_G$ the affine Grassmannian for $G$. The $G(\mathcal O)$-orbits on $Gr_G$ correspond to dominant coweights of $G$ and form a stratification of $Gr_G$. Let Perv$_{G(\mathcal O)}(Gr_G)$ denote the category of $G(\mathcal O)$-equivariant perverse sheaves on $Gr_G$.

\begin{Thm}[\cite{Lus,Gin,BD,MV} \label{geometric Satake}]
Let $G$ be a connected reductive group over $\mathbb C$. The tensor category Perv$_{G(\mathcal O)}(Gr_G)$ is equivalent to the tensor category of representations of $G^L$.
\end{Thm}
Here $G^L$ denotes the Langlands dual group of $G$, but for the case we are interested in, $G^L=GL_m$. Let $\vec\lambda=(\lambda^1,\ldots,\lambda^n)$ be any sequence of dominant coweights of $G$. The \emph{polygon space with side lengths $\vec\lambda$} is defined as
\begin{align*}
\text{Poly}(\vec\lambda)=\{(g_1=g_{n+1},\ldots,g_n)\in Gr_G^n \mid d(g_i,g_{i+1})=\lambda^i, g_1=t^0=g_{n+1}\}.
\end{align*}
We will only be concerned with the case of $G=GL_m$ and the $\lambda^i$ minuscule. It was shown in \cite{Hai} that this variety is equidimensional for $G=GL_m$ (see also \cite{Hai2} for the minuscule case for general $G$). See \cite[Lemma 4.6]{HS} for a proof of the following corollary. 
\begin{Cor}\label{cor:Satake}
Under the equivalence of the previous theorem, there is an isomorphism $\left(V_{\lambda^1}\otimes \cdots \otimes V_{\lambda^n}\right)^{G^L}\cong H_{\text{top}}(\text{Poly}(\vec\lambda))$ where $H_{\text{top}}(\text{Poly}(\vec\lambda))$ is the top Borel--Moore homology of Poly$(\vec\lambda)$. Hence, the set of top components of Poly$(\vec\lambda)$ give a basis for $\left(V_{\lambda^1}\otimes \cdots V_{\lambda^n}\right)^G$.
\end{Cor}

\subsection{Knutson--Tao Hives \label{sec:hives}}
In this section we recall the definition of $n$-hives. Consider the set 
\begin{align*}
\Delta_m^3=\{(i,j,k)\in \mathbb Z_{\geq0}^3\mid i+j+k=m\}, 
\end{align*}
a triangle of lattice points of size $m$. 
\begin{Def}
A \emph{hive}, or \emph{3-hive}, is an assignment $f_{i,j,k}$ of an integer to each point of $\Delta_m^3$ that satisfies the following rhombus inequalities, 
\begin{align*}
f_{i,j,k}+f_{i,j+1,k-1}&\geq f_{i+1,j,k-1}+f_{i-1,j+1,k}\\
f_{i,j,k}+f_{i+1,j-1,k}&\geq f_{i+1,j,k-1}+f_{i,j-1,k+1}\\
f_{i,j,k}+f_{i+1,j,k-1}&\geq f_{i,j+1,k-1}+f_{i+1,j-1,k},
\end{align*}
for all $(i,j,k)$ in $\Delta_m^3$. There are three types of unit rhombi in $\Delta_m^3$, vertical, slanted to the left, and slanted to the right. Each inequality corresponds to a unit rhombus in the triangular lattice, and it says that the sum across the short diagonal is at least the sum across the long diagonal. Two hives $f_{i,j,k}$ and $g_{i,j,k}$ are considered equivalent if there is a constant $c$ such that $f_{i,j,k}=g_{i,j,k}+c$ for all $(i,j,k)\in \Delta_m^3$. Throughout we will assume that we are dealing with the representative with $f_{m,0,0}=0$. See Figure \ref{hive examples} for two examples of $3$-hives for $m=4$.
\end{Def}

\begin{figure}
\centering
\scalemath{.5}{
\begin{tikzpicture}
  \begin{ternaryaxis}[
  ylabel=$\scalemath{2}{\lambda=(2,1,0,0)}$,xlabel=$\scalemath{2}{\mu=(2,1,0,0)}$,zlabel=$\scalemath{2}{\nu=(0,0,-2,-4)}$,
    clip=false,xticklabels={},yticklabels={},zticklabels={},
    xmin=0,
    ymin=0,
    zmin=0,
    xmax=8,
    ymax=8,
    zmax=8,
    xtick={2,4,6},
    ytick={2,4,6},
    ztick={2,4,6},
    major tick length=0,
]    
  	\node at (axis cs:0,24,0) {$\scalemath{2}{0}$};
	\node at (axis cs:2,22,0) {$\scalemath{2}{2}$};
	\node at (axis cs:4,20,0) {$\scalemath{2}{3}$};
	\node at (axis cs:6,18,0) {$\scalemath{2}{3}$};
	\node at (axis cs:8,16,0) {$\scalemath{2}{3}$};
	
	\node at (axis cs:0,22,2) {$\scalemath{2}{4}$};
	\node at (axis cs:2,20,2) {$\scalemath{2}{5}$};
	\node at (axis cs:4,18,2) {$\scalemath{2}{5}$};
	\node at (axis cs:6,16,2) {$\scalemath{2}{5}$};
	
	\node at (axis cs:0,20,4) {$\scalemath{2}{6}$};
	\node at (axis cs:2,18,4) {$\scalemath{2}{6}$};
	\node at (axis cs:4,16,4) {$\scalemath{2}{6}$};
	
	\node at (axis cs:0,18,6) {$\scalemath{2}{6}$};
	\node at (axis cs:2,16,6) {$\scalemath{2}{6}$};
	
	\node at (axis cs:0,16,8) {$\scalemath{2}{6}$};
		
  \end{ternaryaxis}
\end{tikzpicture}
}
\hspace{5mm}
\scalemath{.5}{
\begin{tikzpicture}
  \begin{ternaryaxis}[
  ylabel=$\scalemath{2}{\lambda=(2,1,0,0)}$,xlabel=$\scalemath{2}{\mu=(2,1,0,0)}$,zlabel=$\scalemath{2}{\nu=(0,-1,-1,-4)}$,
    clip=false,xticklabels={},yticklabels={},zticklabels={},
    xmin=0,
    ymin=0,
    zmin=0,
    xmax=8,
    ymax=8,
    zmax=8,
    xtick={2,4,6},
    ytick={2,4,6},
    ztick={2,4,6},
    major tick length=0,
]    
  	\node at (axis cs:0,24,0) {$\scalemath{2}{0}$};
	\node at (axis cs:2,22,0) {$\scalemath{2}{2}$};
	\node at (axis cs:4,20,0) {$\scalemath{2}{3}$};
	\node at (axis cs:6,18,0) {$\scalemath{2}{3}$};
	\node at (axis cs:8,16,0) {$\scalemath{2}{3}$};
	
	\node at (axis cs:0,22,2) {$\scalemath{2}{4}$};
	\node at (axis cs:2,20,2) {$\scalemath{2}{5}$};
	\node at (axis cs:4,18,2) {$\scalemath{2}{5}$};
	\node at (axis cs:6,16,2) {$\scalemath{2}{5}$};
	
	\node at (axis cs:0,20,4) {$\scalemath{2}{5}$};
	\node at (axis cs:2,18,4) {$\scalemath{2}{6}$};
	\node at (axis cs:4,16,4) {$\scalemath{2}{6}$};
	
	\node at (axis cs:0,18,6) {$\scalemath{2}{6}$};
	\node at (axis cs:2,16,6) {$\scalemath{2}{6}$};
	
	\node at (axis cs:0,16,8) {$\scalemath{2}{6}$};
		
  \end{ternaryaxis}
\end{tikzpicture}
}
\caption{Two examples of $3$-hives with $m=4$ \label{hive examples}}
\end{figure}

A simple consequence of the hive inequalities is that the sequence of edge labels $(f_{m,0,0},f_{m-1,1,0}\ldots,f_{0,m,0})$ is concave, that is, 
\begin{align*}
(f_{m-1,1,0}-f_{m,0,0},f_{m-2,2,0}-f_{m-1,2,0},\ldots,f_{1,m-1,0}-f_{0,m,0})
\end{align*}
is weakly decreasing, hence a dominant weight $\lambda$. Similarly, the successive differences of the edge labels along the other two edges of the hive give dominant weights $\mu$ and $\nu$ where differences are taken clockwise. Notice that taking successive differences of edge labels in the opposite order gives the dual weights $\lambda^*, \mu^*, \nu^*$. We adopt the clockwise convention throughout and say that the edges of the $3$-hive are labeled by $\lambda, \mu, \nu$, or that the hive has type $\lambda, \mu, \nu$. 

The main result about hives is that they count tensor product multiplicities of $GL_m$ representations, or equivalently the dimension of invariant spaces. That is, let $V_{\lambda}$ denote the finite-dimensional irreducible representation of $GL_m$ with highest weight $\lambda$. Then the following theorem was proved in \cite{KT} and used to establish the saturation conjecture for $GL_m$. See \cite{Buc} for a self-contained treatment of hives and the saturation conjecture.

\begin{Thm}[\cite{KT}]\label{thm:3hives}
The number of $3$-hives of type $\lambda, \mu, \nu$ is equal to the dimension of the invariant space $(V_{\lambda}\otimes V_{\mu}\otimes V_{\nu})^{GL_m}$.
\end{Thm}

We will need the following lemma in \S\ref{sec:main combinatorics}. 
\begin{Lem}\label{lem:pieri}
Let $\lambda,\mu,\nu$ be dominant weights such that $\lambda$ is minuscule. There is exactly one $3$-hive with boundary labeled clockwise by $\lambda,\mu,\nu$ if $\nu^*=\mu+w\cdot \lambda$ for some $w\in S_n$, and none otherwise. In other words, the boundary hive values completely determine the interior face values if one exists.
\end{Lem}
\begin{proof}
Recall that the tensor product with a minuscule weight is multiplicity free, that is, $V_{\lambda}\otimes V_{\mu}=\oplus_{\nu} c_{\lambda,\mu}^{\nu^*} V_{\nu^*}$, where $c_{\lambda,\mu}^{\nu^*}$ is either $0$ or $1$. In particular, by the Pieri rule, the only irreducible representations that have multiplicity $1$ correspond to the dominant $\nu^*$ gotten from $\mu$ by adding (resp. subtracting) a vertical strip of $i$ boxes if $\lambda=\omega_i$ (resp. $\omega_i^*$). Apply Theorem \ref{thm:3hives} and the fact that $c_{\lambda,\mu}^{\nu^*}=\dim(V_\lambda\otimes V_\mu\otimes V_\nu)^{GL_m}$.
\end{proof}

Theorem \ref{thm:3hives} and Corollary \ref{cor:Satake} imply that $3$-hives are equinumerous with components of Poly$(\lambda,\mu,\nu)$. Kamnitzer gave an explicit bijection in \cite{Kam} and generalized the definition of hives to more than three tensor factors. To define $n$-hives let
\begin{align*}
\Delta_m^n=\{(i_1,\ldots,i_n)\in \mathbb Z_{\geq 0}^n\mid i_1+\cdots i_n=m\}
\end{align*}
be an $(n-1)$-dimensional, size-$m$ simplex of lattice points. 
\begin{Def}
An \emph{$n$-hive of size $m$} is an assignment of integers $f_{i_1,\ldots,i_n}$ to each point of $\Delta_{m}^n$ such that the restriction to every two dimensional face of $\Delta_m^n$ is a $3$-hive as defined above, and the following condition is satisfied. For all $\vec{\jmath}=(j_1,\ldots,j_n)$ such that $\sum_{k=1}^n j_k=m-2$ and all $1\leq a<b<c<d \leq n$ we have
\begin{align}\label{oct rec}
f_{\vec{\jmath}+e_a+e_c}+f_{\vec{\jmath}+e_b+e_d}=\max\left(f_{\vec{\jmath}+e_a+e_b}+f_{\vec{\jmath}+e_c+e_d},f_{\vec{\jmath}+e_a+e_d}+f_{\vec{\jmath}+e_b+e_c}\right)
\end{align}
where $e_k$ is the vector with a single $1$ in the $k$th position and all other entries $0$. As before, two hives are considered equivalent if they differ by a global scaling by some constant. We assume that hives are scaled so that $f_{m,0,\ldots,0}=0$. The hive values corresponding to lattice points $(i_1,\ldots,i_n)$ with at most three nonzero entries are called \emph{face values} or \emph{face labels}. Those at lattice points with at most two nonzero entries are called \emph{edge values} or \emph{edge labels}.
\end{Def}
Condition (\ref{oct rec}) in the definition is called the \emph{octahedron recurrence} \cite{RR}. The octahedron recurrence preserves the validity of the rhombus inequalities from one $3$-subhive to another.  As mentioned in the introduction, it is the tropical version of the relations on the Fock--Goncharov coordinates on the configuration space of principal flags for $GL_m$ \cite[\S3]{GS} (see also Remark \ref{rem:canonical coordinates}).

Consider the cycle of lattice points $me_1,\ldots,me_n$ in $\Delta^n_m$. For an  $n$-hive the successive differences $f_{(m-j,j,0,\ldots,0)}-f_{(m-j-1,j+1,0,\ldots,0)}$ along the first edge of this cycle give a dominant weight $\lambda^1$. Likewise, let $\lambda^i$ record the differences of the edge labels along the edge from $me_i$ to $me_{i+1}$. In this case, we say that the hive has \emph{type} $\vec\lambda=(\lambda^1,\ldots,\lambda^n)$, or that the $n$-gon of the $n$-hive is labelled by $\vec\lambda$. The analogue of Theorem \ref{thm:3hives} holds for $n$ tensor factors, which was already shown for $n=4$ in \cite{KTW}. 

\begin{Thm}[\cite{Kam, GS}]\label{thm:n-hive LR}
Let $(\lambda^1,\ldots,\lambda^n)$ be a sequence of dominant weights, not necessarily minuscule. The number of $n$-hives of type $(\lambda^1,\ldots,\lambda^n)$ is equal to the dimension of the invariant space $(V_{\lambda^1}\otimes\cdots\otimes V_{\lambda^n})^{GL_m}$.
\end{Thm}

This theorem was proved using a constructible function $H:\text{Poly}(\vec\lambda)\rightarrow \mathbb Z^{\Delta_m^n}$ defined in \cite{Kam}. It was conjectured in \cite{Kam} that the generic values of this constructible function are $n$-hives and that this is a bijection between components of Poly$(\vec\lambda)$ and $n$-hives of type $\vec\lambda$. This was proved in \cite{GS}, building on the work of \cite{FG}. The previous theorem then follows from Corollary \ref{cor:Satake} of the geometric Satake correspondence, although the explicit bijection is more important for our purposes.

\subsection{Bijection Between Components and Hives \label{sec:Kamnitzer function}}
A hive can be interpreted as a collection of generic values of constructible functions on the polygon space. For each $\vec{\imath}=(i_1,\ldots,i_n)$ in $\Delta_m^n$ denote the corresponding function by
\begin{align*}
H_{\vec{\imath}}:\text{Poly}(\vec\lambda)\rightarrow \mathbb Z.
\end{align*}
These were defined in \cite{Kam} with a suggestion from Speyer as follows. For any representation $V$ of $GL_m(\mathbb C)$, the group $GL_m(\mathcal K)$ acts on $\mathcal K\otimes V$. Think of this as a right action. Recall that $\mathcal O=\mathbb C[[t]]$, so the vector space $\mathcal K\otimes V$ has the filtration
\begin{align*}
\cdots \subset t\mathcal O\otimes V\subset\mathcal O\otimes V\subset t^{-1}\mathcal O\otimes V\subset \cdots.
\end{align*}
Define the valuation $\text{val}:\mathcal K\otimes V\rightarrow \mathbb Z$ by $\text{val}(v)=-k$ if $v\in t^k\mathcal O\otimes V$, but $v\not\in t^{k+1}\mathcal O\otimes V$. The negative here is due to the fact that we are using the max convention in the definition of octahedron recurrence.

Given $\vec{\imath}=(i_1,\ldots,i_n)$ in $\Delta_m^n$, let $V_{\vec\imath}=V_{\omega_{i_1}}\otimes\cdots\otimes V_{\omega_{i_n}}$ where $V_{\omega_{j}}$ is the $j$th fundamental representation of $GL_m$. Then $V_{\vec\imath}$ is a representation of $GL_m^n$, and restricting to the diagonal subgroup $GL_m^{\Delta}$ we see that $V_{\vec\imath}$ contains a unique copy of the determinant representation. Let $\xi_{\vec\imath}$ be a vector in this one-dimensional subrepresentation of $V_{\vec\imath}$. 

Since $GL_m(\mathcal K)^n$ acts on $\mathcal K\otimes V_{\vec\imath}$, for any $(h_1,\ldots,h_n)\in G(\mathcal K)^n$ we can evaluate val$\left(\xi_{\vec\imath}\cdot (h_1,\ldots,h_n)\right)$. This descends to the quotient $Gr^n=GL_m(\mathcal K)^n/GL_m(\mathcal O)^n$ because the action of $GL_m(\mathcal O)^n$ on $\mathcal K\otimes V_{\vec\imath}$ preserves the filtration and hence the valuation of any vector. Since $\text{Poly}(\vec\lambda)$ is a subset of $Gr^n$, this gives a well-defined function.

The following theorem was proved for $n=3$ in \cite[Theorem 1.4]{Kam} and in full generality in \cite{GS}. We state it without proof.
\begin{Thm}[\cite{Kam, GS}]\label{thm:Goncharov-Shen}
The values $H_{\vec\imath}$ on a generic point of a component of $\text{Poly}(\vec\lambda)$ form an $n$-hive of type $\vec\lambda$. This is a bijection between the components of the polygon space and $n$-hives. In other words, each component of the polygon space has an open, dense set on which the values of $H_{\vec\imath}$ are constant and give a hive.
\end{Thm}

\begin{Rem}\label{rem:canonical coordinates}
Ian Le gave an equivalent interpretation of these functions in \cite{Le1}. Given a point $(g_1,\ldots,g_{n})\in Gr^n$ consider each $g_j$ as a lattice. Consider the values
\begin{align*}
-\text{val}(v_{11},\ldots,v_{1i_1},v_{21},\ldots,v_{1i_2},\ldots,v_{n1},\ldots,v_{1i_n})
\end{align*}
where $v_{j1},\ldots,v_{ji_j}$ range over the elements of the lattice $g_j$. Then $H_{i_1,\ldots,i_n}$ is the maximum of these values. As mentioned at the end of \S\ref{sec:intro geometry} of the introduction, the functions $H_{i_1,\ldots,i_n}$ are the tropicalization of the functions $h_{i_1,\ldots,i_n}$ defined there.
\end{Rem}

Restricting to the minuscule case, we will use this theorem to establish the bijection between components of Poly$(\vec\lambda)$ and affine growth diagrams of type $\vec\lambda$. When referring to a generic point of a component of Poly$(\vec\lambda)$ we mean a point in the open, dense set of the previous theorem. As noted, Theorem \ref{thm:n-hive LR} is a consequence of combining this theorem with geometric Satake.

The following proposition relates the edge labels of a hive to the pairwise distances $d(g_i,g_j)$ for generic points of the corresponding component of the polygon space. It is also mentioned in \cite[Remark 2]{LO}. We provide a proof for completeness.
\begin{Prop}\label{prop:hive edges}
Let $p=(g_1=[1]=g_{n+1},g_2,\ldots,g_n)$ be a generic point of a component of Poly$(\vec\lambda)$, so that the function values $f_{\vec{\imath}}=H_{\vec{\imath}}(p)$ form an $n$-hive. Consider the edge values $f_{\vec{s}_k}$ at the lattice points $\vec{s}_k=(0,\ldots,0,m-k,0,\ldots,0,k,0,\ldots,0)$ for all $0\leq k\leq m$ where the nonzero entries appear at indices $i$ and $j$. Then the dominant weight given by the differences $(f_{\vec{s}_1}-f_{\vec{s}_0}, f_{\vec{s}_2}-f_{\vec{s}_1},\ldots,f_{\vec{s}_m}-f_{\vec{s}_{m-1}})$ is $d(g_i,g_j)$. 
\end{Prop}
\begin{proof}
For any $k$ the value $H_{\vec{s}_k}(g_1,\ldots,g_i,\ldots,g_j,\ldots,g_n)$ depends only on $g_i$ and $g_j$, since the $0$ entries of $\vec{s}_k$ correspond to the trivial representation in the definition of $V_{\vec{s}_k}$, so denote this value by $H_{\vec{s}_k}(g_i,g_j)$. There exists an element $h\in G(\mathcal K)$ such that $(h\cdot g_i,h\cdot g_j)=(t^0,t^\lambda)$ where $d(g_i,g_j)=\lambda$. For any $h\in GL_m(\mathcal K)$, on checks from the definition that $H_{\vec{s}_k}(h\cdot g_i,h\cdot g_j)=H_{\vec{s}_k}(g_a,g_b)+\text{val}(\det(h))$ (note that $\xi_{\vec{s}_k}$ is an eigenvector for the action of $(h,\ldots,h)$). Together with the fact that $H_{\vec{s}_k}(t^0,t^\lambda)=\lambda_1+\cdots+\lambda_j$ this shows that the successive differences give $(\lambda_1,\ldots,\lambda_m)$. 
\end{proof}

\subsection{The Main Theorem Related to Geometry}\label{sec:main combinatorics}
The following theorem is the main result relating the combinatorics of growth diagrams to the geometry of polygon spaces. Recall that $St_n$ is the set of vertices of the staircase diagram, and let $\overline{i}$ denote $i$ modulo $n$.
\begin{Thm}\label{thm:geometry theorem}
Let $Z$ be a component of Poly$(\vec\lambda)$ and $p=(g_1=[1]=g_{n+1},g_1,\ldots,g_n)$ a generic point in $Z$. The dominant weights $\gamma_{i,j}=d(g_{\overline{i}},g_{\overline{j}})$ for $(i,j)$ in $St_n$ form an affine growth diagram of type $\vec\lambda$ and do not depend on the choice of generic $p$ in $Z$. Furthermore, this is a bijection, that is, the affine growth diagrams of type $\vec\lambda$ index the components of Poly$(\vec\lambda)$.
\end{Thm}

This geometric interpretation of affine growth diagrams has the following two immediate combinatorial consequences. See \cite{Hen} for related work on the periodicity of the octahedron recurrence.

\begin{Cor}\label{cor:symmetry}
Any affine growth diagram $\{\gamma_{i,j}\}_{(i,j)\in St_n}$ is periodic with period $n$, that is $\gamma_{i,j}=\gamma_{i+n,j+n}$. Furthermore, there is a dual symmetry, $\gamma_{i,j}^*=\gamma_{j,i+n}$.
\end{Cor}
\begin{proof}
Any affine growth diagram of type $\vec\lambda$ corresponds to a component of Poly$(\vec\lambda)$, so for a generic point $p=(g_1,g_2,\ldots,g_n)$, $\gamma_{i,j}=d(g_{\overline{i}},g_{\overline{j}})=\gamma_{i+n,j+n}$. Similarly, for a fixed index $i$ let $j=i+k$ for some $k$ such that $0\leq k\leq n$. Then $\gamma_{i,j}^*=d(g_{\overline{i}},g_{\overline{j}})^*=d(g_{\overline{j}},g_{\overline{i}})=\gamma_{j,i+n}$. Note $j\leq j+n-k=i+n\leq j+n$, so this is a valid vertex of the infinite staircase.
\end{proof}

The last part of the corollary says that the $i$th row, $\{\gamma_{i,j}\mid i\leq j\leq i+n\}$, read left to right is dual to the $(i+n)$th column, $\{\gamma_{j,i+n}\mid i\leq j\leq i+n\}$, read top to bottom. Put differently, consider the triangle of weights
\begin{align*}
\left\{\gamma_{i,j}\mid 1\leq i\leq n,\; i\leq j\leq n+1\right\}, 
\end{align*}
which we will call the \emph{fundamental triangular region} of the staircase or just the \emph{fundamental region}. Transposing the fundamental region and taking duals gives the triangle of weights
\begin{align*}
\left\{\gamma_{i,j}\mid 1\leq i\leq n\;, n+1\leq j\leq i+n\right\},
\end{align*}
\emph{the dual fundamental region}. Hence, the weight labels of the fundamental triangle determine the rest of the diagram by dual symmetry and periodicity.

We now discuss the proof of Theorem \ref{thm:geometry theorem}. Let $p=(g_1,\ldots,g_n)$ be a generic point in a component $Z$ of Poly$(\vec\lambda)$, and let $\gamma_{i,j}=d(g_{\overline{i}},g_{\overline{j}})$. It is immediate from the definition that $\gamma_{i,i}=\vec{0}$ and $\gamma_{i,i+1}=d(g_{\overline{i}},g_{\overline{i+1}})=\lambda^i$. To prove the first part of Theorem \ref{thm:geometry theorem} we must show that each of the neighboring pairs $\gamma_{i,j},\gamma_{i,j+1}$ and $\gamma_{i,j},\gamma_{i+1,j}$ differ by a vertical strip and that the local condition 
\begin{align*}
\gamma_{i+1,j+1}=\text{sort}(\gamma_{i+1,j}+\gamma_{i,j+1}-\gamma_{i,j})
\end{align*}
is satisfied for all $(i,j)\in St_n$ with $j\not=i$ and $j\not=i+n$. Proposition \ref{prop:hive edges} shows that the weights $\gamma_{i,j}$ are given by the edge labels of the corresponding $n$-hive $f_{\vec{\imath}}=H_{\vec{\imath}}(p)$, so we can work directly with the hive. The vertical strip condition follows from Lemma \ref{lem:pieri} by setting $\mu=\gamma_{i,j}, \nu^*=\gamma_{i,j+1}$ and $\lambda=\lambda^{i}$ because the face values $f_{\vec\imath}$ spanned by the vertices $i,j,j+1$ form a $3$-hive. Similar reasoning applies to vertices $i,i+1,j$. 

\begin{figure}[b]
\center
\begin{tikzpicture}
 \node[regular polygon, regular polygon sides=10, minimum size=4cm, draw,
        outer sep=0pt] (a) {};
\draw[decoration={markings,mark=at position 0.5 with {\arrow{>}}},postaction={decorate}] (a.corner 2) -- (a.corner 8) node[midway, above, shift={(.25,-.1)}] {\scalemath{1}{\gamma_{3,7}}};
\draw[decoration={markings,mark=at position 0.7 with {\arrow{>}}},postaction={decorate}] (a.corner 2) -- (a.corner 7) node[near end, above, shift={(.3,-.1)}] {\scalemath{1}{\gamma_{3,8}}};
\draw[decoration={markings,mark=at position 0.3 with {\arrow{>}}},postaction={decorate}] (a.corner 3) -- (a.corner 8) node[near start,above] {\scalemath{1}{\gamma_{2,7}}};
\draw[decoration={markings,mark=at position 0.5 with {\arrow{>}}}, postaction={decorate}] (a.corner 3) -- (a.corner 7) node[midway, below, shift={(-.2,.1)}] {\scalemath{1}{\gamma_{2,8}}};

\draw [decoration={markings,mark=at position 0.5 with {\arrow{>}}}, postaction={decorate}] (a.corner 2) -- (a.corner 1) node[midway, above] {$\lambda^{3}$};
\draw [decoration={markings,mark=at position 0.5 with {\arrow{>}}}, postaction={decorate}] (a.corner 3) -- (a.corner 2) node[midway, above] {$\lambda^{2}$};
\draw [decoration={markings,mark=at position 0.5 with {\arrow{>}}}, postaction={decorate}] (a.corner 4) -- (a.corner 3) node[midway, left] {$\lambda^{1}$};
\draw [decoration={markings,mark=at position 0.5 with {\arrow{>}}}, postaction={decorate}] (a.corner 5) -- (a.corner 4) node[midway, left] {$\lambda^{10}$};
\draw [decoration={markings,mark=at position 0.5 with {\arrow{>}}}, postaction={decorate}] (a.corner 6) -- (a.corner 5) node[midway, below] {$\lambda^{9}$};
\draw [decoration={markings,mark=at position 0.5 with {\arrow{>}}}, postaction={decorate}] (a.corner 7) -- (a.corner 6) node[midway, below] {$\lambda^{8}$};
\draw [decoration={markings,mark=at position 0.5 with {\arrow{>}}}, postaction={decorate}] (a.corner 8) -- (a.corner 7) node[midway, below] {$\lambda^{7}$};
\draw [decoration={markings,mark=at position 0.5 with {\arrow{>}}}, postaction={decorate}] (a.corner 9) -- (a.corner 8) node[midway, right] {$\lambda^{6}$};
\draw [decoration={markings,mark=at position 0.5 with {\arrow{>}}}, postaction={decorate}] (a.corner 10) -- (a.corner 9) node[midway, right] {$\lambda^{5}$};
\draw [decoration={markings,mark=at position 0.5 with {\arrow{>}}}, postaction={decorate}] (a.corner 1) -- (a.corner 10) node[midway, above] {$\lambda^{4}$};
\end{tikzpicture}
\caption{A 10-gon with edges indicated for $i=2$, $j=7$.\label{quadrilateral}}
\end{figure}

We are left to prove that the weights given by the edge labels $f_{\vec\imath}$ between the $i,i+1,j,j+1$ vertices of the hive satisfy the local condition (\ref{local condition}). Figure \ref{quadrilateral} shows the corresponding edges in the $n$-gon for $n=10$ and $(i,j)=(2,7)$. For fixed indices $i$ and $j$ the hive values spanned by the vertices $me_i,me_{i+1},me_j,me_{j+1}$ form a $4$-hive, so we assume that $i=1$ and $j=3$ in the statement of the following proposition.

\begin{figure}
\[
\begin{tikzpicture}[baseline=-3pt, scale=.8]
\shadedraw[top color=white!25!black, bottom color=white] (-2,0) -- (0,2) -- (2,0) -- (-2,0);
\shadedraw[top color=white, bottom color=white!25!black] (-2,0) -- (0,-2) -- (2,0) -- (-2,0);

\node at (-.15,-.25) {$\gamma_{1,3}$};
\node at (1.25,1.25) {$\gamma_{2,3}$};
\node at (-1.25,-1.25) {$\gamma_{1,4}$};
\node at (-1.25,1.25) {$\gamma_{1,2}$};
\node at (1.25,-1.25) {$\gamma_{3,4}$};

\draw (-1.5,.5) -- (1.5,.5);
\draw (-1,1) -- (1,1);
\draw (-.5,1.5) -- (.5,1.5);

\draw (1,0) -- (-.5,1.5);
\draw (0,0) -- (-1,1);
\draw (-1,0) -- (-1.5,.5);
\draw (1,0) -- (1.5,.5);
\draw (0,0) -- (1,1);
\draw (-1,0) -- (.5,1.5);

\draw (-1.5,-.5) -- (1.5,-.5);
\draw (-1,-1) -- (1,-1);
\draw (-.5,-1.5) -- (.5,-1.5);

\draw (1,0) -- (-.5,-1.5);
\draw (0,0) -- (-1,-1);
\draw (-1,0) -- (-1.5,-.5);
\draw (1,0) -- (1.5,-.5);
\draw (0,0) -- (1,-1);
\draw (-1,0) -- (.5,-1.5);

\end{tikzpicture}
\;\longrightarrow\;
\begin{tikzpicture}[baseline=-3pt, scale=.8]
\shadedraw[left color=white!25!black, right color=white] (0,-2) -- (2,0) -- (0,2) -- (0,-2);
\shadedraw[left color=white, right color=white!25!black] (0,-2) -- (-2,0) -- (0,2) -- (0,-2);

\draw  (-1.5,-.5) -- (-1.5,.5);
\draw (-1,-1) -- (-1,1);
\draw (-.5,-1.5) -- (-.5,1.5);

\draw  (1.5,-.5) -- (1.5,.5);
\draw (1,-1) -- (1,1);
\draw (.5,-1.5) -- (.5,1.5);

\draw (-.5,1.5) -- (1.5,-.5);
\draw (-1,1) -- (1,-1);
\draw (-1.5,.5) -- (.5,-1.5);
\draw (1.5,.5) -- (-.5,-1.5);
\draw (1,1) -- (-1,-1);
\draw (.5,1.5) -- (-1.5,-.5);

\node at (.6,0) {$\gamma_{2,4}$};
\node at (1.25,1.25) {$\gamma_{2,3}$};
\node at (-1.25,-1.25) {$\gamma_{1,4}$};
\node at (-1.25,1.25) {$\gamma_{1,2}$};
\node at (1.25,-1.25) {$\gamma_{3,4}$};
\end{tikzpicture}
\]

\[
\begin{tikzpicture}[baseline=-3pt, scale=.8]
\shadedraw[top color=white!25!black, bottom color=white] (-2,0) -- (0,2) -- (2,0) -- (-2,0);
\shadedraw[top color=white, bottom color=white!25!black] (-2,0) -- (0,-2) -- (2,0) -- (-2,0);
\draw[color=white, thick] (-1,0) -- (1,0);

\draw (-1.5,.5) -- (1.5,.5);
\draw (-1,1) -- (1,1);
\draw (-.5,1.5) -- (.5,1.5);

\draw (1,0) -- (-.5,1.5);
\draw (0,0) -- (-1,1);
\draw (-1,0) -- (-1.5,.5);
\draw (1,0) -- (1.5,.5);
\draw (0,0) -- (1,1);
\draw (-1,0) -- (.5,1.5);

\shadedraw[left color=white, right color=white!65!black] (0,0) -- (.5,.5) -- (.5,-.5) -- cycle;
\shadedraw[right color=white, left color=white!65!black] (1,0) -- (.5,.5) -- (.5,-.5) -- cycle;
\shadedraw[left color=white, right color=white!65!black] (-1,0) -- (-.5,.5) -- (-.5,-.5) -- cycle;
\shadedraw[right color=white, left color=white!65!black] (0,0) -- (-.5,.5) -- (-.5,-.5) -- cycle;
\draw (.5,.5) -- (.5,-.5);
\draw (-.5,.5) -- (-.5,-.5);

\draw (-1.5,-.5) -- (1.5,-.5);
\draw (-1,-1) -- (1,-1);
\draw (-.5,-1.5) -- (.5,-1.5);

\draw (1,0) -- (-.5,-1.5);
\draw (0,0) -- (-1,-1);
\draw (-1,0) -- (-1.5,-.5);
\draw (1,0) -- (1.5,-.5);
\draw (0,0) -- (1,-1);
\draw (-1,0) -- (.5,-1.5);
\end{tikzpicture} 
\;\longrightarrow\;
\begin{tikzpicture}[baseline=-3pt, scale=.8]
\shadedraw[top color=white!25!black, bottom color=white] (-2,0) -- (0,2) -- (2,0) -- (-2,0);
\shadedraw[top color=white, bottom color=white!25!black] (-2,0) -- (0,-2) -- (2,0) -- (-2,0);
\draw[color=white, thick] (-1,0) -- (1,0);

\draw (-1.5,.5) -- (1.5,.5);
\draw (-1,1) -- (1,1);
\draw (-.5,1.5) -- (.5,1.5);

\draw (1,0) -- (-.5,1.5);
\draw (0,0) -- (-1,1);
\draw (-1,0) -- (-1.5,.5);
\draw (1,0) -- (1.5,.5);
\draw (0,0) -- (1,1);
\draw (-1,0) -- (.5,1.5);

\shadedraw[right color=white!75!black, left color=white!25!black] (0,0) -- (.5,.5) -- (.5,-.5) -- cycle;
\shadedraw[right color=white, left color=white!65!black] (1,0) -- (.5,.5) -- (.5,-.5) -- cycle;
\shadedraw[left color=white, right color=white!65!black] (-1,0) -- (-.5,.5) -- (-.5,-.5) -- cycle;
\shadedraw[left color=white!75!black, right color=white!25!black] (0,0) -- (-.5,.5) -- (-.5,-.5) -- cycle;
\shadedraw[top color=white!75!black, bottom color=white!25!black] (0,0) -- (.5,.5) -- (-.5,.5) -- cycle;
\shadedraw[bottom color=white!75!black, top color=white!25!black] (0,0) -- (.5,-.5) -- (-.5,-.5) -- cycle;
\draw (.5,.5) -- (.5,-.5);
\draw (-.5,.5) -- (-.5,-.5);


\draw (-1.5,-.5) -- (1.5,-.5);
\draw (-1,-1) -- (1,-1);
\draw (-.5,-1.5) -- (.5,-1.5);

\draw (1,0) -- (-.5,-1.5);
\draw (0,0) -- (-1,-1);
\draw (-1,0) -- (-1.5,-.5);
\draw (1,0) -- (1.5,-.5);
\draw (0,0) -- (1,-1);
\draw (-1,0) -- (.5,-1.5);
\end{tikzpicture}
\]
\caption{A $4$-hive balanced on its bottom edge labelled by $\gamma_{2,4}$ and a diagrammatic depiction of an application of the octahedron recurrence at a single unit octahedron. \label{fig:excavate}}
\end{figure}

\begin{Prop}\label{prop:hive excavation}
Consider a $4$-hive such that the successive differences of the edge values along the edge from $me_i$ to $me_j$ give the dominant weight $\gamma_{i,j}$ for $1\leq i,j\leq 4$. Suppose that $\gamma_{1,2}$ and $\gamma_{2,3}$ are minuscule. Then $\gamma_{2,4}=\text{sort}(\gamma_{2,3}+\gamma_{1,4}-\gamma_{1,3})$.
\end{Prop}

The proof is the main technical part of establishing Theorem \ref{thm:geometry theorem} and appears in \S \ref{main proof}. Although involved, it is elementary and combinatorial. The idea is as follows. Picture the $4$-hive balanced on the edge labelled by $\gamma_{2,4}$, so that the top two faces are visible from above. The view from above shows the edge corresponding to $\gamma_{1,3}$, but the bottom edge labelled by $\gamma_{2,4}$ is obscured from view by the hive. 

The goal is to use the octahedron recurrence to excavate from the top two faces to the bottom two faces, revealing the hive values along the bottom edge. The octahedron recurrence can be viewed as removing a unit octahedron to reveal the integer label at a previously obscured lattice point of the hive. See Figure \ref{fig:octahedron} of the introduction and Figure \ref{fig:excavate}. The shaded tetrahedron figures were inspired by the figures in \cite{KTW} and \cite{Zin}. A similar proof technique was used in \cite{KTW} to prove associativity of a ring multiplication defined via hives, implying that $3$-hives count tensor product multiplicities. The argument is also extended to Hall polynomials in \cite{Zin}. 

\subsubsection{Proof of Bijectivity}
The rest of this section is concerned with proving the bijectivity statement of Theorem \ref{thm:geometry theorem}. We know by Theorem \ref{thm:Goncharov-Shen} that components of Poly$(\vec\lambda)$ are in bijection with $n$-hives of type $\vec\lambda$, and by Proposition \ref{prop:hive edges} that the distances $d(g_i,g_j)$ coincide with the weights given by the $(i,j)$-edge hive values. The previous proposition gives a map from $n$-hives of type $\vec\lambda$ to affine growth diagrams of type $\vec\lambda$, so it suffices to show that this map is a bijection. We first show that this map is injective by showing that $n$-hives are overdetermined.

\begin{Lem}\label{lem:triangulation}
Let $T$ be a triangulation of the $n$-gon by diagonal edges. The face values of an $n$-hive corresponding to the faces of $T$ determine the entire $n$-hive.
\end{Lem}
\begin{proof}
Given two neighboring triangles of a triangulation $T$, one can remove the common edge of the two triangles and replace it with the other diagonal of the resulting quadrilateral, giving a new triangulation $T'$. This corresponds to applying the octahedron recurrence to the face values of the two triangles in the $n$-hive to determine all of the labels $f_{\vec{\imath}}$ in the $4$-subhive determined by the four vertices, as in Proposition \ref{prop:hive excavation}. Since any triangulation can be reached from a fixed triangulation $T$ via such diagonal flips, every hive value $f_{\vec{\imath}}$ where $\vec{\imath}$ has at most four nonzero entries is determined. It is easy to see by induction that the rest of the hive values are also determined. Suppose $\vec{\imath}=(i_1,i_2,\ldots,i_{k},0,\ldots,0)$ such that $i_j\not=0$ for $1\leq j\leq k$ and $i_{k-1}$ and $i_{k}$ are the two smallest values of the $i_j$ (possibly equal). Then applying the octahedron recurrence (\ref{oct rec}) for indices $a=k-1, b=1, c=k, d=2$ and $\vec{\jmath}=(i_1,i_2,\ldots,i_{k-1}-1,i_{k}-1,0,\ldots,0)$ expresses $f_{\vec{\imath}}$ in terms of labels corresponding to indices with either both $i_{k-1}, i_k$ decreased by one, or at least one of $i_{k-1}, i_{k}$ decreased by one and the other unchanged.  
\end{proof}

The previous Lemma holds for an $n$-hive without any minuscule assumptions. It can be refined further with the added assumption that the $n$-gon weights are minuscule. More specifically, the hive values along the edges of certain triangulations fully determine the hive values. Recall that a \emph{minuscule path of type $\vec{\lambda}$} is a sequence of dominant weights $(\mu^1=\vec{0},\mu^2,\ldots,\mu^{n},\mu^{n+1}=\vec{0})$ such that for all $i$ there exists a $w$ in $S_n$ satisfying $\mu^i-\mu^{i-1}=w\cdot \lambda^i$.

\begin{Prop}\label{prop:overdetermined}
Let $\vec\lambda$ be a sequence of minuscule weights and $f_{\vec{\imath}}$ an $n$-hive of type $\vec\lambda$. Let $T$ be the fan triangulation of the $n$-gon consisting of all diagonal edges adjacent to the first vertex $me_1$. Then the entire hive $f_{\vec{\imath}}$ is determined by the edge values along the edges of $T$. Furthermore, let $\mu^j$ be the corresponding weight given by the hive values along the edge $(1,j)$ of $T$. Then $(\vec{0},\mu^2,\ldots,\mu^{n},\vec{0})$ is a minuscule path of type $\vec\lambda$ and this is a bijection.
\end{Prop}
\begin{proof}
Since each of the triangles of $T$ has an edge labeled by a minuscule weight $\lambda^i$, by Lemma \ref{lem:pieri} the face values corresponding to the faces of $T$ are determined by the hive edge values of $T$. The rest of the hive values are determined by Lemma \ref{lem:triangulation}. That the $\mu^i$ form a minuscule path of type $\vec\lambda$ also follows from the Lemma \ref{lem:pieri}. This, together with the assumption that $f_{(m,0,\ldots,0)}=0$, gives the bijection.
\end{proof}

\begin{Rem}\label{rem:extroverted triangulation}
The proof of the previous proposition applies to any \emph{extroverted triangulation}, a triangulation of the $n$-gon such that each triangle contains an external edge of the $n$-gon.
\end{Rem}

The weights $\gamma_{1,j}=\mu^j$ given by the edges of the fan triangulation correspond to the labels along the first line of the corresponding affine growth diagram. Since the edge values determine the entire hive, the map from $n$-hives to growth diagrams $\gamma_{i,j}$ is injective. The previous proposition also shows that the image consists of growth diagrams such that the first row is a minuscule path of type $\vec\lambda$. We will show that this is true for any affine growth diagram of type $\vec\lambda$ and that the first row completely determines the rest of the diagram. 

To see that an affine growth diagram can be recovered from a single row we need the following lemma. This was shown in \cite{vLee2}, but we sketch a direct proof for $GL_m$.

\begin{Lem}\label{lem:reversible}
Let $\gamma_{i,j}$ be an affine growth diagram. Then
\begin{align*}
\gamma_{i,j}=\text{sort}(\gamma_{i+1,j}+\gamma_{i,j+1}-\gamma_{i+1,j+1}), 
\end{align*}
i.e. the local condition in a growth diagram is symmetric in $\gamma_{i,j}$ and $\gamma_{i+1,j+1}$ for fixed $\gamma_{i+1,j}$ and $\gamma_{i,j+1}$.
\end{Lem}
\begin{proof}
The local condition states that $\gamma_{i+1,j+1}=\text{sort}(\gamma_{i+1,j}+\gamma_{i,j+1}-\gamma_{i,j})$. We claim that there exists a $w\in S_n$ such that $\gamma_{i+1,j+1}=w\cdot(\gamma_{i+1,j}+\gamma_{i,j+1}-\gamma_{i,j})$, and $w$ stabilizes $\gamma_{i+1,j}$ and $\gamma_{i,j+1}$. Then $\gamma_{i+1,j+1}=\gamma_{i+1,j}+\gamma_{i,j+1}-w\cdot(\gamma_{i,j})$ implies $\gamma_{i,j}=w^{-1}\cdot(\gamma_{i+1,j}+\gamma_{i,j+1}-\gamma_{i+1,j+1})$, which is dominant since $\gamma_{i,j}$ is dominant.

To prove the claim let $\nu=\gamma_{i,j}$, $\lambda=\gamma_{i,j+1}$, $\mu=\gamma_{i,j+1}$, and $\rho=\gamma_{i+1,j+1}$. There are four cases depending on whether the differences $\lambda-\nu$ and $\mu-\nu$ are positive or negative vertical strips. We'll check the case that $\lambda-\nu=w'\cdot \omega_k$ and $\mu-\nu=w''\cdot\omega_l$, the others being similar. Hence, $\nu=(\nu_1,\ldots,\nu_n)$ is a fixed dominant weight such that adding a vector of $k$ $1$'s to $\nu$ gives $\lambda$, and adding a vector of $l$ $1$'s to $\nu$ gives $\mu$. Then $\mu+\lambda-\nu=\mu+w'\cdot\omega_k$ can be viewed as adding $k$ $1$'s to $\mu$ in the same entries as the nonzero entries of $\lambda-\nu$. The result is non-dominant only if a $1$ is added to $\mu_\alpha$ for an index $\alpha$ such that $\mu_{\alpha}=\mu_{\alpha-1}$. Since the difference $\mu-\nu$ is also a vertical strip, this can only happen if the $\alpha$ entry in $\mu-\nu=w''\cdot \omega_l$ contains a $1$. Here is an example with $k=2$ and $l=2$ where a dominant weight $\lambda$ is depicted as an outline of a partition with infinitely many boxes in row $i$ to the left of column $\lambda_i$.

$\nu=$\begin{tikzpicture}[baseline=5ex,scale=.35]
\draw[thick] (0,0) -- (1,0) -- (1,3) -- (2,3) -- (2,6) -- (4,6);
\end{tikzpicture}
$\lambda=$\begin{tikzpicture}[baseline=5ex,scale=.35]
\draw[thick] (0,0) -- (1,0) -- (1,3) -- (2,3) -- (2,6) -- (4,6);
\draw (1,3) rectangle (2,2);
\draw (1,3) rectangle (2,1);
\end{tikzpicture}
$\mu=$\begin{tikzpicture}[baseline=5ex,scale=.35]
\draw[thick] (0,0) -- (1,0) -- (1,3) -- (2,3) -- (2,6) -- (4,6);
\draw (1,3) rectangle (2,2);
\draw (2,6) rectangle (3,5);
\end{tikzpicture}
$(\mu+\lambda-\nu)=$\begin{tikzpicture}[baseline=5ex,scale=.35]
\draw[thick] (0,0) -- (1,0) -- (1,3) -- (2,3) -- (2,6) -- (4,6);
\draw (1,3) rectangle (2,1);
\draw (1,3) rectangle (3,2);
\draw (2,6) rectangle (3,5);
\end{tikzpicture}\;$\xrightarrow{sort}$\;
$\rho=$\begin{tikzpicture}[baseline=5ex,scale=.35]
\draw[thick] (0,0) -- (1,0) -- (1,3) -- (2,3) -- (2,6) -- (4,6);
\draw (1,3) rectangle (2,2);
\draw (1,2) rectangle (2,1);
\draw (2,6) rectangle (3,5);
\draw (2,5) rectangle (3,4);
\end{tikzpicture}

The necessary sorting only occurs within a range of indices for which the components of $\mu$ are equal, i.e. $\mu_{\alpha}=\mu_{\alpha-1}=\cdots=\mu_{\alpha-\beta}$, and likewise the corresponding entries of $\lambda$ are equal, so there exists a $w\in S_n$ stabilizing $\lambda$ and $\mu$ such that $\rho=w\cdot(\mu+\lambda-\nu)$. Of course, $\mu+\lambda-\nu$ may already be dominant, so taking $w$ to be the identity element in this case satisfies the statement.
\end{proof}

As a consequence, an affine growth diagram can be recovered from a single row by applying the deterministic local condition (\ref{local condition}) from the northwest to the southeast and from the southeast to the northwest. We now show that the first row must be a minuscule path of type $\vec\lambda$. As always, the indices $i$ in $\lambda^i$ are taken modulo $n$.

\begin{Lem}{\label{Lem:edge differences}}
Let $\gamma_{i,j}$ be an affine growth diagram of type $\vec{\lambda}$. For all $i$ and $j$ such that $i+1\leq j\leq i+n$ there exists a $w\in S_n$ such that $\gamma_{i,j}-\gamma_{i+1,j}=w\cdot \lambda^i$. In particular, $\gamma_{i+1,i+n}={\lambda^i}^*$. Likewise, for all $j$ and $i$ such that $j-n\leq i\leq j-1$ there exists $w\in S_n$ such that $\gamma_{i,j}-\gamma_{i,j-1}=w\cdot \lambda^j$. 
\end{Lem}

\begin{proof}
Fix a row index $i$ and suppose that $\lambda^i=\omega_k$. The first statement says that the differences across row $i$ and $i+1$ are always a positive vertical strip of size $k$. To see this, propagate differences to the right along a row using the local condition. For $j=i+1$, $\gamma_{i,i+1}-\gamma_{i+1,i+1}=\omega_k-\vec{0}=\omega_k$ obviously satisfies the statement. The local condition states that $\gamma_{i+1,i+2}=\text{sort}(\gamma_{i,i+2}+\gamma_{i+1,i+1}-\gamma_{i,i+1})=\text{sort}(\gamma_{i,i+2}-\omega_k)$, that is, $\gamma_{i+1,i+2}$ and $\gamma_{i,i+2}$ differ by a vertical strip of size $k$. Hence, $\gamma_{i,i+2}-\gamma_{i+1,i+2}=w\cdot \omega_k$ for some $w\in S_n$. Continuing along by induction gives the result for all $j$. Then for $j=i+n$, $\gamma_{i,i+n}-\gamma_{i+1,i+n}=w\cdot \omega_k$, but $\gamma_{i,i+n}=\vec{0}$ implies $\gamma_{i+1,i+n}=-w\cdot \omega_k$. Since $\gamma_{i+1,i+n}$ is dominant this implies that $\gamma_{i+1,i+n}=\omega_k^*$. For the final statement, propagate down columns and use the observation that $\gamma_{i+1,i+n}={\lambda^i}^*$, or alternatively propagate up columns using the local condition applied northwestward.
\end{proof}

\begin{Prop}\label{prop:diagram bijection}
The first row of an affine growth diagram of type $\vec\lambda$ is a minuscule path of type $\vec\lambda$. More generally, the $i$th row is a minuscule path of type $(\lambda^i,\lambda^{i+1},\ldots,\lambda^n,\lambda^1,\ldots,\lambda^{i-1})$. There is a bijection between affine growth diagrams of type $\vec\lambda$ and minuscule paths of type $\vec\lambda$.
\end{Prop}
\begin{proof}
The first two statements follows from the previous lemma. The last statement follows from the first together with the observation that a growth diagram may be recovered from the first row by applying the local condition to the southeast and northwest.
\end{proof}

\begin{proof}[Proof of Theorem \ref{thm:geometry theorem}]
The first statement follows from Propositions \ref{prop:hive edges} and \ref{prop:hive excavation}. Each component corresponds to a hive such that the weights along the edges $(1,i)$ of the hive are $d(g_1,g_i)$. Bijectivity follows from Proposition \ref{prop:overdetermined} and Proposition \ref{prop:diagram bijection}.
\end{proof}

\begin{Rem}
It was shown in \cite{FKK} via a geometric argument that the components of Poly$(\vec\lambda)$ are indexed by minuscule paths given by the distances $d(g_1,g_j)$ along the fan triangulation. These distances comprise the first row of the corresponding affine growth diagram. As mentioned in Remark \ref{rem:extroverted triangulation}, the $n$-hive is determined by the edges of any extroverted triangulation. Likewise, an affine growth diagram is determined by the weights along any path from southwest diagonal to northeast diagonal. That is, any path $(i_1,j_1),\ldots,(i_{n+1},j_{n+1})$ in $St_n$ such that $(i_1,j_1)=(i,i)$ for some $i$ and for all $k$, $(i_k,j_k)-(i_{k+1},j_{k+1})=(1,0)$ or $(0,-1)$. The weights along such path form a minuscule path whose type is determined by the order of eastward and northward steps. 
\end{Rem}


\section{Combinatorics of Affine Growth Diagrams \label{combinatorics}}
This section is devoted to showing that the classical Robinson--Schensted correspondence and Fomin growth diagrams appear within affine growth diagrams.

\subsection{Greene's Theorem Interpretation of Fomin Diagrams}\label{sec:combinatorial background}
Recall from the introduction that Fomin growth diagrams are defined by local growth rules. The partitions in a Fomin growth diagram also have a more global interpretation. To see this, recall the following theorem due to Greene \cite{Gre2} (see also \cite{Gre1} and \cite{Sag}). Let $P$ be a finite partially ordered set and let $c_k(P)$ (resp. $a_k(P)$) be the largest size of a union of $k$ disjoint chains (resp. antichains) in $P$.

\begin{Thm}[\cite{Gre2}]\label{thm:Greene}
For any finite partially ordered set $P$ there exists a partition $\lambda=\lambda(P)$ such that $a_k(P)=\lambda_1+\cdots+\lambda_k$ and $c_k(P)=\lambda^t_1+\cdots+\lambda^t_k$ for all $k$, where $\lambda^t$ denotes the transpose of $\lambda$.
\end{Thm}

In what follows we will be interested in applying Greene's theorem to partially ordered sets defined by permutation matrices or partial permutation matrices. For a partial permutation matrix define the partially ordered set on the set of $1$ entries as follows. For two locations $A$ and $B$, each containing a $1$, define $A<B$ if $B$ is southeast of $A$. For a matrix $M$ let $M_\searrow$ denote this poset and $\lambda(M_{\searrow})$ the associated partition given by Theorem \ref{thm:Greene}. Define similarly the partitions $\lambda(M_{\nwarrow}), \lambda(M_{\nearrow}), \lambda(M_{\swarrow})$ associated to the poset on the set of $1$ entires as before, but the larger elements are to the northwest, northeast, southeast respectively. 

\begin{Prop}\label{prop:reflect}
Let $M$ be a permutation matrix and set $\lambda=\lambda(M_{\searrow})$. Then $\lambda=\lambda(M_{\nwarrow})$ and $\lambda^t=\lambda(M_{\swarrow})=\lambda(M_{\nearrow})$.
\end{Prop}
\begin{proof}
Reversing the ordering doesn't affect chains and antichains, whereas the other two reflections swap chains and antichains.
\end{proof}

The following proposition from \cite{Rob} interprets the partitions of a Fomin growth diagram in terms of Greene's theorem.

\begin{Prop}[\cite{Rob}]\label{prop:Roby original}
Let $\alpha_{i,j}$ denote the partition labeling the vertex in row $i$, column $j$ of a Fomin growth diagram and $\pi$ the corresponding permutation matrix. Let $\pi(i,j)$ be the $(i-1) \times (j-1)$ northwest-justified submatrix of $\pi$. Then $\alpha_{i,j}=\lambda(\pi(i,j)_{\searrow})$ for all $i,j$.
\end{Prop}

Hence, Fomin's local rules govern the monotonic growth of the partition $\lambda(\pi_{\searrow})$ as more of the permutation matrix is revealed. Note, in particular, taking a horizontal step from vertex $(i,j)$ to $(i,j+1)$ reveals a new column in the corresponding matrices $\pi(i,j)$ and $\pi(i,j+1)$ and possibly a new maximal element of the associated poset, thereby growing $\alpha_{i,j}$ to $\alpha_{i,j+1}$ by at most one box. Similarly, a vertical step from $(i,j)$ to $(i+1,j)$ reveals a new row.

As mentioned in the introduction, the RS-correspondence can be viewed as a special case of the RSK-correspondence and the Sundaram bijection. We will show how to realize the Sundaram bijection in terms of affine growth diagrams. In \S\ref{sec:filling} we give a rule to assign natural numbers to the unit squares of an affine growth diagrams. For an affine growth diagram of type $\vec\lambda$ where each $\lambda^i$ is $\omega_1$ or $\omega_1^*$ this assignment gives $0,1$ entries, and when $m$ is large this can be interpreted as a fixed-point-free involution.

In \S\ref{sec:main combinatorial theorem} we define a construction of an affine growth diagram starting from natural number entries by applying Greene's theorem to certain submatrices. We then establish the main combinatorial result, Theorem \ref{thm:bijection}, which says that these constructions are inverses for $m$ large. From the construction it follows that the first row of the resulting diagram consists of weights that have no negative parts, hence an oscillating tableau. It follows from the construction that Fomin growth diagrams, and the RS-correspondence, appear within affine growth diagrams when $\vec\lambda=(\omega_1,\ldots,\omega_1,\omega_1^*,\ldots,\omega_1^*)$.

In section \ref{sec:RSK} we drop the $\omega_1,\omega_1^*$ assumption to realize Roby's generalized Fomin diagrams within affine growth diagrams. These diagrams correspond to the RSK-correspondence \cite{Rob}. All three bijections can be seen as specializations of yet a fourth bijection, which we now describe. 
\begin{Def}
Let a \emph{row-strict semistandard oscillating tableau} of length $n$ be a sequence of partitions $(\mu^1=\vec{0},\mu^2,\ldots,\mu^n,\mu^{n+1}=\vec{0})$ such that for all $i$ the partitions $\mu^i$ and $\mu^{i+1}$ differ by a vertical strip. The \emph{content} is the sequence of integers $(a_1,\ldots,a_n)$ where $a_i$ is the signed size of the vertical strip $\mu^{i+1}-\mu^i$. Since we are always dealing with vertical strips, we will just say semistandard oscillating tableau to mean row-strict.
\end{Def}
For an $n\times n$ symmetric matrix $M$ of natural numbers, let $r_i=\sum_{j=i}^n M_{i,j}$ be the sum of the entries in row $i$ from the main diagonal onward. Let $c_i=\sum_{i=1}^j M_{i,j}$ be the sum of the entries in column $i$ from the top entry to the main diagonal. When the first row of an affine growth diagram is a semistandard oscillating tableau, we show in \S\ref{sec:RSK} that the natural number entries give such a matrix. Furthermore, this also has an inverse construction, as stated in Theorem \ref{thm:inverse map}. A corollary is the following theorem, which although stated a little differently, is also proved in \cite{Rob}.

\begin{Thm}[\cite{Rob}]
There is a bijection between semistandard oscillating tableaux of length $n$ and $n\times n$ symmetric matrices such that for all $i$, either $r_i=0$ or $c_i=0$. 
\end{Thm}
Note that the condition $r_i=0$ or $c_i=0$ forces zeroes on the main diagonal. These symmetric matrices can be thought of as the natural-number analogues of fixed-point-free involutions. Although these bijections are not new, we believe that the main construction of \S\ref{sec:filling} is interesting in its own right. As a final corollary, we show in Theorem \ref{thm:generalized Fomin} that the generalized Fomin growth diagrams appear within affine growth diagrams.

\subsection{Natural Number Entries \label{sec:filling}}

To realize the bijections discussed, we assign a natural number to each unit square of the growth diagram that depends only on the four weights labelling the four corners of the square. These entries will be $0$ or $1$ when restricting to the case that each $\lambda^i$ of $\vec\lambda$ is $\omega_1$ or $\omega_1^*$. 

For any dominant weight $\gamma$ let $\gamma^+$ be the weight consisting of positive entries and $\gamma^-$ consisting of negative entries. For example, for the dominant weight $\gamma=(3,1,1,0,0,-2,-4)$, we have $\gamma^+=(3,1,1,0,0,0,0)$ and $\gamma^-=(0,0,0,0,0,-2,-4)$. We will often think of these as a pair of partitions. When defining the inverse construction we will need $m$ to be large enough so that the positive and negative parts do not interact.

Each difference, $\gamma_{i,j}-\gamma_{i+1,j}$, is a vertical strip of some size $k$, so that $\gamma_{i,j}^+, \gamma_{i+1,j}^+$ also differ by a vertical strip of some size $l$ that is at most $k$. Define the integer $v_{i,j}$ to be $l$ if the difference $\gamma_{i,j}^+-\gamma_{i+1,j}^+$ is a positive strip and $-l$ if it is a negative strip. 
\begin{Def}
Let $n_{i,j}=\lvert v_{i,j}-v_{i,j+1}\rvert$ be the natural number associated to the unit square with vertices $(i,j), (i,j+1), (i+1,j), (i+1,j+1)$. Likewise, define $h_{i,j}$ to be the signed size of the vertical strip $\gamma_{i,j}^+-\gamma_{i,j+1}^+$, and $m_{i,j}=\lvert h_{i,j}-h_{i+1,j}\rvert$.
\end{Def}

\begin{Lem}\label{lem:natural entries}
For an affine growth diagram, the natural numbers $n_{i,j}$ and $m_{i,j}$ are equal for all indices $i$ and $j$. The $n_{i,j}$ are $n$-periodic, that is $n_{i,j}=n_{i+n,j+n}$. Furthermore, there is a symmetry, $n_{i,j}=n_{j,i+n}$ for all $i$ and $j$.
\end{Lem}
\begin{proof}
There are two paths from vertex $(i,j)$ to $(i+1,j+1)$ around the corresponding unit square, implying that $v_{i,j}+h_{i+1,j}=h_{i,j}+v_{i,j+1}$. Hence, $v_{i,j}-v_{i,j+1}=h_{i,j}-h_{i+1,j}$, so in particular $n_{i,j}=m_{i,j}$. The periodicity follows from the periodicity of $\gamma_{i,j}$. The symmetry statement follows from the dual symmetry $\gamma_{j,i+n}=\gamma_{i,j}^*$ of Corollary \ref{cor:symmetry}.
\end{proof}

A convenient way to visualize the last symmetry statement of the proposition is to place the dual fundamental triangular region $\{(i,j)\mid 1\leq i\leq n+1,\; n+1\leq j\leq i+n\}$, along with its entries $n_{i,j}$, underneath the fundamental triangular region consisting of the vertices $\{(i,j)\mid 1\leq i\leq n+1,\; i\leq j\leq n+1\}$. This identifies the vertices $(i,i)$ and $(i,n+i)$, giving an $n\times n$ symmetric matrix with zero entries along the main diagonal, \emph{the matrix associated to the affine growth diagram}. Technically, the diagonal $n_{i,i}$ are not even defined, so we set them to zero.

\begin{Rem}
We suspect that it may be interesting to allow negative entries by removing the absolute value in the definitions of $n_{i,j}$ and $m_{i,j}$, but it is not necessary for our goal of realizing the Robinson--Schensted correspondence and we do not explore this in the present work. 
\end{Rem}

When $m$ is large enough, the fundamental weights give the row and column sums of the $n_{i,j}$ entries. Informally speaking, taking $m$ large prevents the positive and negative partitions from interacting other than by a large sorting.

\begin{Prop}
Fix a sequence of nonzero integers $(k_1,\ldots,k_n)$ such that $\sum_{k_i>0}k_i=\sum_{k_i<0}\lvert k_i\rvert$ and let $m$ be at least $\sum_{k_i>0}k_i$. Let $\lambda^i=\omega_{k_i}$ if $k_i$ is positive and let $\lambda^i=\omega_{\lvert k_i\rvert}^*$ otherwise. For an affine growth diagram of type $\vec\lambda$, for all $i$ the row sum $\sum_{j=i+1}^{i+n-1} n_{i,j}$ is equal to $k_i$. Likewise, for all $i$ the column sum is $\sum_{j=i+1}^{i+n-1} n_{j,i+n}=k_i$.
\end{Prop}
\begin{proof}
The condition $\sum_{k_i>0}k_i=\sum_{k_i<0}\lvert k_i\rvert$ is necessary for the existence of an affine growth diagram of type $\vec\lambda$.

Fix an index $i$ and set $k=k_i$. Suppose that $k$ is positive, so $\lambda^i=\omega_k$ (the negative case is analogous). By Lemma \ref{Lem:edge differences}, for all $j$ there exists a $w_j\in S_n$ such that $\gamma_{i,j}-\gamma_{i+1,j}=w_j\cdot \omega_k$ is a positive, vertical strip of size $k$. In particular, for $j=i+1$, $\gamma_{i,i+1}=\lambda^i=\omega_k$ and $\gamma_{i+1,i+1}=\vec{0}$, so $v_{i,i+1}=k$. We claim that $v_{i,j}$ is monotonically decreasing from $k$ to $0$ as $j$ increases from $i+1$ to $i+n-1$, from which the result follows. In particular, $v_{i,i+n-1}=0$ because $\gamma_{i,i+n}=\vec{0}$ and $\gamma_{i+1,i+n}=\omega_k^*$. 

Suppose the claim holds up to index $j$ and that $v_{i,j}=l$. Let $A\subset [m]$ be the set of $l$ indices where $\gamma_{i,j}^+$ and $\gamma_{i+1,j}^+$ differ. Likewise, let $B\subset [m]$ be the set of $k-l$ indices where $\gamma_{i,j}^-$ and $\gamma_{i+1,j}^-$ differ. Then the local condition, $\gamma_{i+1,j+1}=\text{sort}(\gamma_{i,j+1}-(\gamma_{i,j}-\gamma_{i+1,j}))$, implies that $1$ is subtracted from $\gamma_{i,j+1}$ in positions $A$ and positions $B$, followed by a possible sorting. Hence, the only way for $v_{i,j+1}$ to be strictly greater than $v_{i,j}$ is if $\gamma_{i,j+1}$ has a positive entry for some index $b\in B$. For $m$ sufficiently large this is impossible. One can show that $m\geq \sum_{k_i>0}k_i$ suffices. For the column sum statement, notice that $\gamma_{i+1,i+n}={\lambda^i}^*=\omega_k^*$, and apply the same reasoning together with the previous lemma.
\end{proof}

Let us now restrict to the case that each $\lambda^i$ is equal to $\omega_1$ or $\omega_1^*$ and $m$ large. In this case, the previous proposition says that every row and column has exactly one entry equal to $1$. Recall that an \emph{affine permutation} is a bijection $f:\mathbb Z\rightarrow \mathbb Z$ such that $f(x+n)=f(x)+n$. In particular, an affine permutation is determined by the values $f(1),\ldots, f(n)$.

\begin{Cor}
If $\vec\lambda$ is a sequence of weights such that each $\lambda^i$ is $\omega_1$ or $\omega_1^*$, then for $m\geq n/2$ each row and column of an affine growth diagram of type $\vec\lambda$ has exactly one $n_{i,j}$ entry equal to $1$ and all others equal to $0$. The function $f:\mathbb Z\rightarrow \mathbb Z$ defined by $f(i)=j$ if $n_{i,j}=1$ is an affine permutation satisfying $f\circ f(i)=i+n$.
\end{Cor}
\begin{proof}
The first part is just a special case of the previous proposition. The condition $f(i+n)=f(i)+n$ is satisfied by the periodicity of affine growth diagrams. The last part follows from the symmetry statement, $n_{i,j}=n_{j,i+n}$ of Lemma \ref{lem:natural entries}.
\end{proof}

From the affine permutation $f$ define a permutation $\pi \in S_n$ by $\pi(i)=f(i) \mod n$. As previously mentioned, this is the $n\times n$ permutation matrix constructed by placing the dual fundamental triangular region underneath the fundamental region. The permutation $\pi$ is a fixed-point-free involution. It is fixed-point-free because $n_{i,i}=0$ for all $i$, and it is an involution because the corresponding permutation matrix is symmetric.

This defines a map from such affine growth diagrams to fixed-point-free involutions. This map is not a bijection. For example, the two growth diagrams with first line labelled by 
\begin{align*}
(0,\ldots,0),(1,0,\ldots,0),(2,0,\ldots,0,0),(1,0,\ldots,0),(0,\ldots,0)\\
(0,\ldots,0),(1,0,\ldots,0),(1,0,\ldots,0,-1),(1,0,\ldots,0),(0,\ldots,0) 
\end{align*}
both map to the involution $1\leftrightarrow 4$, $2\leftrightarrow 3$. However, the map is a bijection if restricted to affine growth diagrams such that the first line is labelled by an oscillating tableau, that is, if $\gamma_{1,j}=\gamma_{1,j}^+$ for all $j$.

\begin{Def}
Let $\mathcal A_{n,m}$ be the set of $GL_m$ affine growth diagrams of type $(\lambda^1,\ldots,\lambda^n)$ such that each $\lambda^i$ is $\omega_1$ or $\omega_1^*$, and $\gamma_{1,j}=\gamma_{1,j}^+$ for all $j$. Let $\mathcal{FI}_n$ denote the set of fixed-point-free involutions in $S_n$. Let $\Phi_{n,m}:\mathcal A_{n,m}\rightarrow \mathcal{FI}_n$  for $m\geq n/2$ denote the map that associates the above fixed-point-free involution to each affine growth diagram. 
\end{Def}
We will show that this map has an inverse.

\subsection{Constructing Affine Growth Diagrams from Natural Numbers \label{sec:main combinatorial theorem}}

We will now construct an affine growth diagram from a fixed-point-free involution and show that the construction is an inverse to $\Phi_{n,m}$. The construction applies Greene's theorem (Theorem \ref{thm:Greene}) to the posets corresponding to certain submatrices of the staircase diagram defined below. From this construction it will follow that Fomin growth diagrams appear within affine growth diagrams.

\begin{Def}
To define the inverse map $\Psi_{n,m}:\mathcal{FI}_n\rightarrow \mathcal A_{n,m}$ for $m\geq n/2$ begin with an empty staircase diagram and a fixed-point-free involution $\pi$. For indices $(i,i+k)$ such that $1\leq i\leq n$ and $0\leq k\leq n$, set $n_{i,i+k}=1$ if $\pi(i)=k$ and $0$ otherwise. Extend to the infinite staircase periodically. 

To define $\gamma_{i,j}$ we will define $\gamma_{i,j}^+$ and $\gamma_{i,j}^-$ individually. For $\gamma_{i,j}^+$ consider the $(j-i)\times (n+1-j)$ matrix $M(i,j)^+$ consisting of entries $n_{a,b}$ of the rectangular region 
\begin{align*}
\{(a,b)\mid i\leq a\leq j-1,\; j\leq b\leq n \}. 
\end{align*}
This matrix is contained in the largest rectangle with northwestern-most vertex $(i,j)$ that fits inside of the fundamental triangular region (see Figure \ref{rectangle domain}). Since $M(i,j)^+$ is a partial permutation matrix, we can interpret the set of $1$ entries as a partially ordered set where for two locations $(i_1,j_1), (j_1,j_2)$ that contain a $1$ entry, $(i_1,j_1)<(i_2,j_2)$ if $i_1> i_2$ and $j_1> j_2$. That is, larger elements are to the northwest. Define $\gamma_{i,j}^+$ to be the $GL_m$ weight with parts given by the partition $\lambda(M(i,j)^+_{\nwarrow})$ according to Greene's theorem. 

To define $\gamma_{i,j}^-$ consider the matrix $M(i,j)^-$ consisting of entries $n_{a,b}$ of the rectangular region 
\begin{align*}
\{(a,b)\mid 1\leq a\leq i-1,\; j-i\leq b\leq j-1\}. 
\end{align*}
This is an $i\times(j-i)$ matrix, the largest matrix with southeastern-most vertex $(i,j)$ that fits inside the fundamental triangular region. Let $\gamma_{i,j}^-$ be the $GL_m$ weight whose negative entries are given by the partition $\lambda(M(i,j)^-_\searrow)$.

Finally, define $\gamma_{i,j}$ to be $\gamma_{i,j}^++{\gamma_{i,j}^-}$, the dominant $GL_m$ weight with positive (resp. negative) parts given by $\gamma_{i,j}^+$ (resp. $\gamma_{i,j}^-$), which is guaranteed to be well-defined if $m\geq n/2$, so that there are no collisions between positive and negative entries. 
\end{Def}

\begin{figure}
\center
\scalemath{.6}{
\begin{tikzpicture}
\draw[step=1cm] (-5,2) grid (0,3);
\draw[step=1cm] (-4,1) grid (1,2);
\draw[step=1cm] (-3,0) grid (2,1);
\draw[step=1cm] (-2,-1) grid (3,0);
\draw[step=1cm] (-1,-2) grid (4,-1);
\draw[step=1cm] (0,-3) grid (5,-2);
\draw (0,-3)--(6,-3);
\draw (-6,3)--(0,3);

\fill[opacity=.4] (-3,2) -- (-3,3) -- (-5,3) -- (-5,2) -- cycle;
\fill[opacity=.4] (-3,2) -- (0,2) -- (0,0) -- (-3,0) -- cycle;

\node[above right] at (-3,2) {\scalemath{1.4}{\gamma_{2,4}}};
\node[circle, fill] at (-3,2) {};

\node at (-.8,-.3) {$\bf M(2,4)^+$};
\node at (-4,3.3) {$\bf M(2,4)^-$};

\end{tikzpicture}}
\caption{The rectangles corresponding to the partial permutation matrices $M(2,4)^-$ and $M(2,4)^+$ for a staricase diagram with $n=6$.\label{rectangle domain}}
\end{figure}

Theorem \ref{thm:bijection} below states that this construction does indeed produce an affine growth diagram. The proof will require a technical lemma regarding the way the partition associated to a finite poset by Greene's theorem evolves under addition of a new extremal element. This was proved by Thomas Roby in his thesis \cite{Rob}, which is stated below without proof. Let $P_{11}$ be a poset containing $P_{00}$ such that $P_{01}=P_{00}\cup e_1$, $P_{10}=P_{00}\cup e_2$, and $P_{11}=P_{00}\cup e_1\cup e_2$ where $e_1\not=e_2$. For $i,j=0,1$ let $\lambda_{ij}$ be the partition corresponding to the poset $P_{i,j}$ according to Greene's theorem. For two partitions $\mu\subseteq \nu$, let $\nu/\mu$ denote the skew diagram of the cells in $\nu$ but not in $\mu$. When referring to a cell $(x,y)$ of a partition, $x$ denotes the row and $y$ the column.

\begin{Lem}[{\cite[A.3.1]{Rob}}]{\label{lem:Roby lemma}}
Assume $\lambda_{01}=\lambda_{10}$. Let $a=(x_a,y_a)$ be the cell $\lambda_{01}/\lambda_{00}$ and $b=(x_b,y_b)$ be the cell $\lambda_{11}/\lambda_{01}$.
\begin{itemize}
\item
If $e_1$ and $e_2$ are extremal elements of $P_{11}$ of different types, then $x_b=x_a$ or $x_b=x_a+1$.
\item
If $e_1$ and $e_2$ are extremal elements of $P_{11}$ of the same type, then $x_b\leq x_a$.
\end{itemize}
\end{Lem}

The main idea in the proof of the following theorem is a variation on an argument from \cite[A.3.3]{Rob}.

\begin{Thm}\label{thm:bijection}
The dominant weights $\Psi_{n,m}(\pi)=\{\gamma_{i,j}\}_{(i,j)\in St_n}$ defined from a fixed-point-free involution $\pi$ form an affine growth diagram. The type of $\Psi_{n,m}(\pi)$ is $\vec\lambda$ where $\lambda^i=\omega_1$ if $\pi(i)>i$ and $\lambda^i=\omega_1^*$ otherwise. Furthermore, $\Phi_{n,m}\circ \Psi_{n,m}(\pi)=\pi$, and the maps are bijections for $m\geq n/2$.
\end{Thm}
\begin{proof}
We must show that the local condition is satisfied for each unit square, so let $X$ be a unit square with entry $n_{i,j}$ and dominant weights $\nu=\gamma_{i,j}, \mu=\gamma_{i,j+1}, \lambda=\gamma_{i+1,j}, \rho=\gamma_{i+1,j+1}$. Assume without loss of generality that $X$ is contained in the fundamental triangular region, and let $X^*$ denote the dual unit square in the dual fundamental region, with corresponding entry $n_{j,i+n}=n_{i,j}$. 

Consider the row containing $X$. Let $A$ be the portion of the row before $X$, $B$ the portion of the row after $X$ contained in the fundamental region, and $C^*$ the portion of the row after $X$ contained in the dual fundamental region. See Figure \ref{9 cases}. Let $C$ be the column portion in the fundamental region that is dual to the region $C^*$. The row containing $X$ has exactly one entry equal to $1$, which by the dual symmetry is equivalent to the L-shape $C, A, X, B$ in the fundamental region containing exactly one entry equal to $1$. Similarly, consider the column containing $X$. Let $D$ be the portion below $X$, $F$ the portion above $X$ contained in the fundamental region, and $E$ the row in the fundamental region completing the L-shape $F,X,D,E$. The L-shape $F, X, D, E$ contains exactly one entry equal to $1$. 

Either $X$ and $X^*$ both contain a $1$, or neither do. If neither do, then there are $9$ possibilities depending on whether $A,B$ or $C$ contains a $1$ and whether $D,E$ or $F$ contains a $1$. We will check the cases that $X$ or $B$ contain a $1$.

Suppose first that the unit square $X$ contains a $1$, so that the regions $A,B,C,D,E,F$ only contain zeroes. Let $\nu^{\pm}, \lambda^{\pm}, \mu^{\pm}, \rho^{\pm}$ be the weights defined by the submatrices $M(i,j)^{\pm}$, $M(i+1,j)^{\pm}$, $M(i,j+1)^{\pm}$, $M(i+1,j+1)^{\pm}$. We will interpret the weights as partitions. Since $M(i,j)^+$ contains a $1$ at position $X$ that is not contained in any of $M(i+1,j)^+, M(i,j+1)^+, M(i+1,j+1)^+$, it follows that $\nu^+$ covers $\lambda^+=\mu^+=\rho^+$ in Young's lattice. Likewise, $\rho^-$ cover $\lambda^-=\mu^-=\nu^-$ because $M(i+1,j+1)^-$ contains a $1$ at position $X$ not contained in $M(i,j)^-, M(i+1,j)^-, M(i,j+1)^-$.

Since the $1$ entry in square $X$ dominates all other $1$ entries in $M(i,j)^+$, it must be part of the longest chain. Therefore, $\nu^+/\lambda^+$ is the last box in the first column of $\nu^+$, say in row $k$. Similarly, $\rho^-/\lambda^-$ is the last box of the first column of $\rho^-$, say in row $l$. Therefore, the difference $\nu-\lambda$ contains a single $1$ in position $k$ since $\nu^-=\lambda^-$. The weight $\mu$ contains a $0$ in position $k$ since $\mu^+=\lambda^+$, so the difference $\mu-(\nu-\lambda)$ contains $-1$ in position $k$. This $-1$ gets sorted to position $m+1-l$, becoming the corresponding box in $\rho^-/\lambda^-$. Hence, $\rho=\text{sort}(\mu+\lambda-\nu)$.

Now suppose that $X$ does not contain a $1$, but region $B$ does. There are three subcases depending on which region $D, E$ or $F$ contains a $1$. Suppose that region $E$ contains a $1$, so that $A,C,D,F$ only contain zeros. The matrices $M(i,j)^-,M(i,j+1)^-,M(i+1,j)^-,M(i+1,j+1)^-$ induce the same poset, so $\lambda^-=\mu^-=\nu^-=\rho^-$. Hence, we only need to compare the positive partitions to see that the local condition is satisfied. 

Let $e_1$ (resp. $e_2$) represent the $1$ entry of region $B$ (resp. $E$) as an element of the poset $M(i,j+1)^+_\nwarrow$. We claim that 
\begin{align*}
M(i,j)^+_\nwarrow&=M(i+1,j)^+_\nwarrow\cup e_1,\\
M(i+1,j+1)^+_\nwarrow&=M(i+1,j)^+_\nwarrow\cup e_2\\
M(i,j+1)^+_\nwarrow&=M(i+1,j)^+_\nwarrow\cup e_1\cup e_2.
\end{align*}
This follows because the matrices $M(i,j+1)^+, M(i+1,j+1)^+$ contain region $E$ but not $D$ (which is empty), whereas the matrices $M(i,j)^+, M(i+1,j)^+$ contain $D$ but not $E$. Similarly, $M(i,j)^+, M(i,j+1)^+$ contain $B$, but $M(i+1,j)^+, M(i+1,j+1)^+$ do not. In terms of partitions, $\mu^+$ covers both $\nu^+$ and $\rho^+$, which both cover $\lambda^+$.

If $\nu^+$ and $\rho^+$ are not equal, then it must be that $\mu^+=\nu^+\cup \rho^+$ and $\lambda^+=\nu^+\cap \rho^+$ simply by the properties of Young's lattice. This gives two paths of length two in Young's lattice with common start and end points, so if $\nu^+$ and $\lambda^+$ differ in row $i$, then so must $\mu^+$ and $\rho^+$, so the local condition is satisfied.

Now consider the case that $\nu^+=\rho^+$. We have $M(i,j+1)^+_\nwarrow=M(i+1,j)^+_\nwarrow\cup e_1\cup e_2$ where $e_1$ and $e_2$ where $e_1$ is maximal and $e_2$ is minimal. Let $a=(x_a,y_a)$ and $b=(x_b,y_b)$ be the cells of $\nu^+/\lambda^+$ and $\mu^+/\nu^+$ respectively. By the first part of Lemma \ref{lem:Roby lemma}, $x_b=x_a$ or $x_b=x_a+1$. If $x_b=x_a$, then both boxes were added to the same row. Thus, $\nu^+-\lambda^+=\mu^+-\rho^+$, so the local condition is satisfied. Now suppose that $x_b=x_a+1$. Consider the dual order posets, $M(i,j)^+_\nearrow,M(i,j+1)^+_\nearrow,M(i+1,j)^+_\nearrow,M(i+1,j+1)^+_\nearrow$. By Proposition \ref{prop:reflect} the associated partitions given by Greene's theorem are the transposes of the original $\lambda^+, \mu^+,\nu^+,\rho^+$. The extremal elements $e_1$ and $e_2$ are still different types, so by another application of Lemma \ref{lem:Roby lemma}, $y_b=y_a$ or $y_b=y_a+1$. Since $y_b=y_a+1$ and $x_b=x_a+1$ cannot simultaneously hold, we conclude $y_b=y_a$. This means that two boxes were added in the same column, one in row $x_a$ and the next in row $x_b=x_a+1$. Then $\nu^+-\lambda^+$ contains a single $1$ in position $x_a$, implying that row $x_a+1$ of $\mu^+-(\nu^+-\lambda^+)$ is one greater than row $x_a$, which after sorting, gives $\rho^+$.

The case that $B$ and $D$ both contain a $1$ has a similar argument to the previous one, except now the $1$ entries in $B$ and $D$ are both maximal in the poset $M(i,j)^+_{\nwarrow}$. This is the argument given in \cite{Rob} to prove Proposition \ref{prop:Roby original}. Finally, if $B$ and $F$ contain a $1$, then $M(i,j)^+_{\nwarrow}=M(i,j+1)^+_{\nwarrow}$ and $M(i+1,j)^+_{\nwarrow}=M(i+1,j+1)^+_{\nwarrow}$, so $\nu^+=\mu^+$, $\lambda^+=\rho^+$, and hence $\nu^+-\lambda^+=\mu^+-\rho^+$. By similar reasoning, $\nu^-=\lambda^-$ and $\mu^-=\rho^-$, so $\nu-\lambda=\mu-\rho$. 

The other six cases are similar, or follow from dual symmetry. The statement about the $\lambda^i$ follows from considering $M(i,i+1)^+$ and $M(i,i+1)^-$. That $\Phi_{n,m}\circ \Psi_{n,m}(\pi)=\pi$ follows by examining the above cases. Since a diagram is determined by it's first row, these maps must be bijections by Sundaram's bijection.
\end{proof}
 
\begin{Rem}
One can show directly that $\Psi_{n,m}\circ \Phi_{n,m}$ is also the identity, thereby recovering Sundaram's bijection.
\end{Rem}
 
\begin{figure}
\centering
\scalemath{.5}{
\begin{tikzpicture}
\draw (0,0) -- (6,0) -- (12,-6) -- (6,-6) -- (0,0);
\draw (6,0) -- (6,-6);

\draw (1,-1) -- (7,-1);
\draw (1.5,-1.5) -- (7.5,-1.5);

\draw (3.5,-3.5) -- (9.5,-3.5);
\draw (4,-4) -- (10,-4);

\draw (1,0) -- (1,-1);
\draw (1.5,0) -- (1.5,-1.5);

\draw (3.5,0) -- (3.5,-3.5);
\draw (4,0) -- (4,-4);

\draw (7,-1) -- (7,-6);
\draw (7.5,-1.5) -- (7.5,-6);

\draw (9.5,-3.5) -- (9.5,-6);
\draw (10,-4) -- (10,-6);

\node at (3.75,-1.25) {X};
\node at (3.75,-2.5) {D};
\node at (5,-3.75) {E};
\node at (5,-1.25) {B};

\node at (7.25,-3.75) {X$^*$};
\node at (7.25,-2.5) {A$^*$};
\node at (6.5,-3.75) {F$^*$};
\node at (6.5,-1.25) {C$^*$};

\node at (2.5,-1.25) {A};
\node at (1.25,-.5) {C};
\node at (3.75,-.5) {F};

\node at (9.75,-5) {E$^*$};
\node at (7.25,-5) {B$^*$};
\node at (8.5,-3.75) {D$^*$};

\end{tikzpicture}
}
\caption{The regions $A, B, C, D, E, F, X$, and their duals, in the proof of Theorem \ref{thm:bijection}. \label{9 cases}}
\end{figure}

Let $n=2k$ and $\vec\lambda=(\omega_1,\ldots,\omega_1,\omega_1^*,\ldots,\omega_1^*)$. As previously mentioned, in this case the first line of affine growth diagrams is an oscillating tableau that can be interpreted as a pair of same-shape standard Young tableaux, the two increasing chains of partitions meeting in the middle. By the last part of Theorem \ref{thm:bijection}, all of the $1$ entries in the corresponding affine growth diagram appear in two $k\times k$ matrices consisting of the $n_{i,j}$ for the indices $\{(i,j)\mid 1\leq i\leq k, k+1\leq j\leq n\}$ and $\{(i,j)\mid k+1\leq i\leq n, n+1\leq j\leq n+k\}$. These are the $k\times k$ squares in the fundamental triangular region and dual triangular region. See Figure \ref{fig:RS} for an example with $n=6$ of the following theorem.

\begin{Thm}{\label{thm:Fomin contained}}
Let $n=2k$ and $\vec\lambda=(\omega_1,\ldots,\omega_1,\omega_1^*,\ldots,\omega_1^*)$. Let $\{\gamma_{i,j}\}_{(i,j)\in St_n}$ be an affine growth diagram of type $\vec\lambda$ for $m\geq n/2$. Then the partitions $\gamma_{i,j}^+$ for $1\leq i\leq k+1$ and $k+1\leq j\leq n+1$ form a Fomin growth diagram, growing from the southeast to the northwest, and the partitions $\gamma_{i,j}^-$ for the same indices form a Fomin growth diagram, growing from the northwest to the southeast. Similarly for the indices $k+1 \leq i\leq n+1$ and $n+1\leq j\leq n+1+k$ the partitions $\gamma_{i,j}^+$ form a Fomin growth diagram from the northwest to southeast and the $\gamma_{i,j}^-$ from southeast to northwest.
\end{Thm} 
\begin{proof}
By the previous theorem all of the $n_{i,j}$ entries that are equal to $1$ appear in the two $k\times k$ squares contained in the two triangular regions. The definition of the $\gamma_{i,j}^+$ according to $\Psi_{n,m}$ on the vertices of this $k\times k$ array coincides with the global definition of Fomin growth diagrams in Proposition \ref{prop:Roby original}.
\end{proof}

\begin{Rem}
This theorem realizes the Robinson--Schensted bijection as a restriction of the Sundaram bijection. Let $P,Q$ be a pair of same-shape standard Young tableaux interpreted as an oscillating tableau written along the first line of an affine growth diagram. Let $\pi\in S_n$ be the corresponding fixed-point-free involution and $\sigma$ the permutation given by the $k\times k$ permutation matrix in the fundamental region. Then $\sigma$ corresponds to $(ev(P),Q)$ under our notion of the column-insertion Robinson--Schensted correspondence where $ev(P)$ denotes Sch\"utzenberger evacuation. See Figure \ref{fig:RS}.
\end{Rem}

\begin{figure}[h!]
\centering
\begin{minipage}{.49\textwidth}
\scalemath{.7}{
\begin{tikzpicture}
\draw[step=1cm] (-5,2) grid (0,3);
\draw[step=1cm] (-4,1) grid (1,2);
\draw[step=1cm] (-3,0) grid (2,1);
\draw[step=1cm] (-2,-1) grid (3,0);
\draw[step=1cm] (-1,-2) grid (4,-1);
\draw[step=1cm] (0,-3) grid (5,-2);
\draw (0,-3)--(6,-3);
\draw (-6,3)--(0,3);

\draw[blue,ultra thick] (0,0)--(0,-3)--(3,-3)--(3,0)--cycle;
\draw[red,ultra thick] (0,0)--(0,3)--(-3,3)--(-3,0)--cycle;
\Yboxdim{2mm}
\node at (6.2,-2.8) {$\scalemath{1}{\vec{0}}$};
\node at (5.2,-2.8) {\yng(1)};
\node at (4.3,-2.8) {\yng(2)};
\node at (3.3,-2.8) {\yng(3)};
\node at (2.3,-2.8) {\yng(2)};
\node at (1.2,-2.8) {\yng(1)};
\node at (.2,-2.8) {$\scalemath{1}{\vec{0}}$};
\node at (.6,-2.5) {\scalemath{1}{1}};
\node at (5.2,-1.8) {$\scalemath{1}{\vec{0}}$};
\node at (4.2,-1.8) {\yng(1)};
\node at (3.3,-1.8) {\yng(2)};
\node at (2.3,-1.8) {\yng(1)};
\node at (1.3,-1.8) {$\scalemath{1}{\vec{0}}$};
\node at (0.2,-1.5) {$\young(:\empty,:\empty,:\empty,\empty)$};
\node at (-.8,-1.8) {$\scalemath{1}{\vec{0}}$};  
\node at (1.6,-1.5) {\scalemath{1}{1}};
\node at (4.2,-.8) {$\scalemath{1}{\vec{0}}$};
\node at (3.2,-.8) {\yng(1)};
\node at (2.3,-.8) {$\scalemath{1}{\vec{0}}$};
\node at (1.3,-.5) {$\young(:\empty,:\empty,:\empty,\empty)$};
\node at (0.3,-.5) {$\young(::\empty,::\empty,::\empty,~~)$};
\node at (-.8,-.5) {$\young(:\empty,:\empty,:\empty,\empty)$};
\node at (-1.8,-.8) {$\scalemath{1}{\vec{0}}$};  
\node at (2.6,-.5) {\scalemath{1}{1}};
\node at (3.2,.2) {$\scalemath{1}{\vec{0}}$};
\node at (2.2,.5) {$\young(:\empty,:\empty,:\empty,~)$};
\node at (1.3,.5) {$\young(::\empty,::\empty,::\empty,~~)$};
\node at (0.325,.5) {$\young(:::\empty,:::\empty,:::\empty,~~~)$};
\node at (-.7,.5) {$\young(::\empty,::\empty,::\empty,~~)$};
\node at (-1.8,.5) {$\young(:\empty,:\empty,:\empty,\empty)$};
\node at (-2.8,.2) {$\scalemath{1}{\vec{0}}$};  
\node at (-2.5,.5) {\scalemath{1}{1}};
\node at (2.2,1.2) {$\scalemath{1}{\vec{0}}$};
\node at (1.2,1.5) {$\young(:\empty,:\empty,:\empty,~)$};
\node at (0.3,1.5) {$\young(::\empty,::\empty,::\empty,~~)$};
\node at (-.7,1.5) {$\young(:\empty,:\empty,:\empty,~)$};
\node at (-1.7,1.2) {$\scalemath{1}{\vec{0}}$};
\node at (-2.8,1.2) {$\yng(1)$};
\node at (-3.8,1.2) {$\scalemath{1}{\vec{0}}$};  
\node at (-1.5,1.5) {\scalemath{1}{1}};
\node at (1.2,2.2) {$\scalemath{1}{\vec{0}}$};
\node at (0.2,2.5) {$\young(:\empty,:\empty,:\empty,~)$};
\node at (-.7,2.2) {$\scalemath{1}{\vec{0}}$};
\node at (-1.7,2.2) {$\yng(1)$};
\node at (-2.7,2.2) {$\yng(2)$};
\node at (-3.8,2.2) {$\yng(1)$};
\node at (-4.8,2.2) {$\scalemath{1}{\vec{0}}$}; 
\draw (0,-3)--(6,-3);
\node at (-.5,2.5) {\scalemath{1}{1}};
\node at (0.2,3.2) {$\scalemath{1}{\vec{0}}$};
\node at (0-.7,3.2) {$\yng(1)$};
\node at (-1.7,3.2) {$\yng(2)$};
\node at (-2.7,3.2) {$\yng(3)$};
\node at (-3.7,3.2) {$\yng(2)$};
\node at (-4.8,3.2) {$\yng(1)$};
\node at (-5.8,3.2) {$\scalemath{1}{\vec{0}}$}; 
\draw (0,-3)--(6,-3);
\draw (-6,3)--(0,3);

\node at (-3.5,-1) {$\sigma=321$};
\node at (3,2) {$\pi=654321$};
\end{tikzpicture}}
\scalemath{.7}{
\begin{tikzpicture}
\draw[step=1cm] (-5,2) grid (0,3);
\draw[step=1cm] (-4,1) grid (1,2);
\draw[step=1cm] (-3,0) grid (2,1);
\draw[step=1cm] (-2,-1) grid (3,0);
\draw[step=1cm] (-1,-2) grid (4,-1);
\draw[step=1cm] (0,-3) grid (5,-2);
\draw (0,-3)--(6,-3);
\draw (-6,3)--(0,3);

\draw[blue,ultra thick] (0,0)--(0,-3)--(3,-3)--(3,0)--cycle;
\draw[red,ultra thick] (0,0)--(0,3)--(-3,3)--(-3,0)--cycle;
\Yboxdim{2mm}
\node at (6.2,-2.8) {$\scalemath{1}{\vec{0}}$};
\node at (5.2,-2.8) {\yng(1)};
\node at (4.2,-2.7) {\yng(1,1)};
\node at (3.2,-2.65) {\yng(1,1,1)};
\node at (2.2,-2.7) {\yng(1,1)};
\node at (1.2,-2.8) {\yng(1)};
\node at (.2,-2.8) {$\scalemath{1}{\vec{0}}$};
\node at (2.5,-2.5) {\scalemath{1}{1}};
\node at (5.2,-1.8) {$\scalemath{1}{\vec{0}}$};
\node at (4.2,-1.8) {\yng(1)};
\node at (3.3,-1.8) {\yng(1,1)};
\node at (2.3,-1.65) {$\young(:~,:~,~)$};
\node at (1.25,-1.65) {$\young(:~,:\empty,~)$};
\node at (0.2,-1.65) {$\young(:\empty,:\empty,\empty)$};
\node at (-.8,-1.8) {$\scalemath{1}{\vec{0}}$};  
\node at (1.6,-1.5) {\scalemath{1}{1}};
\node at (4.2,-.8) {$\scalemath{1}{\vec{0}}$};
\node at (3.2,-.8) {\yng(1)};
\node at (2.3,-.65) {$\young(:~,:\empty,~)$};
\node at (1.3,-.65) {$\young(:~,~,~)$};
\node at (0.25,-.65) {$\young(:\empty,~,~)$};
\node at (-.8,-.65) {$\young(:\empty,:\empty,\empty)$};
\node at (-1.8,-.8) {$\scalemath{1}{\vec{0}}$};  
\node at (.6,-.5) {\scalemath{1}{1}};
\node at (3.2,.2) {$\scalemath{1}{\vec{0}}$};
\node at (2.2,.35) {$\young(:\empty,:\empty,~)$};
\node at (1.3,.35) {$\young(:\empty,~,~)$};
\node at (0.3,.4) {$\young(:\empty,~,~,~)$};
\node at (-.8,.35) {$\young(:\empty,~,~)$};
\node at (-1.8,.35) {$\young(:\empty,:\empty,\empty)$};
\node at (-2.8,.2) {$\scalemath{1}{\vec{0}}$};  
\node at (-2.5,2.5) {\scalemath{1}{1}};
\node at (2.2,1.2) {$\scalemath{1}{\vec{0}}$};
\node at (1.2,1.3) {$\young(:\empty,:\empty,~)$};
\node at (0.2,1.3) {$\young(:\empty,~,~)$};
\node at (-.78,1.3) {$\young(:~,~,~)$};
\node at (-1.8,1.3) {$\young(:~,:\empty,~)$};
\node at (-2.85,1.15) {$\yng(1)$};
\node at (-3.8,1.2) {$\scalemath{1}{\vec{0}}$};  
\node at (-1.4,1.5) {\scalemath{1}{1}};
\node at (1.2,2.2) {$\scalemath{1}{\vec{0}}$};
\node at (0.2,2.3) {$\young(:\empty,:\empty,~)$};
\node at (-.78,2.325) {$\young(:~,:\empty,~)$};
\node at (-1.78,2.325) {$\young(:~,:~,~)$};
\node at (-2.85,2.225) {$\yng(1,1)$};
\node at (-3.8,2.2) {$\yng(1)$};
\node at (-4.8,2.2) {$\scalemath{1}{\vec{0}}$}; 
\draw (0,-3)--(6,-3);
\node at (-.5,.5) {\scalemath{1}{1}};
\node at (0.2,3.2) {$\scalemath{1}{\vec{0}}$};
\node at (0-.8,3.15) {$\yng(1)$};
\node at (-1.8,3.25) {$\yng(1,1)$};
\node at (-2.8,3.35) {$\yng(1,1,1)$};
\node at (-3.8,3.25) {$\yng(1,1)$};
\node at (-4.8,3.15) {$\yng(1)$};
\node at (-5.8,3.2) {$\scalemath{1}{\vec{0}}$}; 
\draw (0,-3)--(6,-3);
\draw (-6,3)--(0,3);

\node at (-3.5,-1) {$\sigma=123$};
\node at (3,2) {$\pi=456123$};
\end{tikzpicture}
}

\scalemath{.7}{
\begin{tikzpicture}
\draw[step=1cm] (-5,2) grid (0,3);
\draw[step=1cm] (-4,1) grid (1,2);
\draw[step=1cm] (-3,0) grid (2,1);
\draw[step=1cm] (-2,-1) grid (3,0);
\draw[step=1cm] (-1,-2) grid (4,-1);
\draw[step=1cm] (0,-3) grid (5,-2);
\draw (0,-3)--(6,-3);
\draw (-6,3)--(0,3);

\draw[blue,ultra thick] (0,0)--(0,-3)--(3,-3)--(3,0)--cycle;
\draw[red,ultra thick] (0,0)--(0,3)--(-3,3)--(-3,0)--cycle;
\Yboxdim{2mm}
\node at (-5.8,3.2) {$\scalemath{1}{\vec{0}}$}; 
\node at (-4.8,3.2) {$\yng(1)$};
\node at (-3.7,3.2) {$\yng(2)$};
\node at (-2.7,3.25) {$\yng(2,1)$};
\node at (-1.7,3.2) {$\yng(2)$};
\node at (0-.7,3.2) {$\yng(1)$};
\node at (0.2,3.2) {$\scalemath{1}{\vec{0}}$};
\node at (-.5,2.5) {\scalemath{1}{1}};
\node at (-4.8,2.2) {$\scalemath{1}{\vec{0}}$}; 
\node at (-3.8,2.2) {$\yng(1)$};
\node at (-2.75,2.25) {$\yng(1,1)$};
\node at (-1.7,2.2) {$\yng(1)$};
\node at (-.7,2.2) {$\scalemath{1}{\vec{0}}$};
\node at (0.15,2.35) {$\young(:\empty,:\empty,~)$};
\node at (1.2,2.2) {$\scalemath{1}{\vec{0}}$};
\node at (-2.5,1.5) {\scalemath{1}{1}};
\node at (-3.8,1.2) {$\scalemath{1}{\vec{0}}$};  
\node at (-2.85,1.15) {$\yng(1)$};
\node at (-1.75,1.325) {$\young(:~,:\empty,~)$};
\node at (-.8,1.35) {$\young(:\empty,:\empty,~)$};
\node at (0.3,1.35) {$\young(::\empty,::\empty,~~)$};
\node at (1.2,1.35) {$\young(:\empty,:\empty,~)$};
\node at (2.2,1.2) {$\scalemath{1}{\vec{0}}$};
\node at (-1.5,.5) {\scalemath{1}{1}};
\node at (-2.8,.2) {$\scalemath{1}{\vec{0}}$};  
\node at (-1.8,.35) {$\young(:\empty,:\empty,\empty)$};
\node at (-.75,.35) {$\young(:\empty,~,~)$};
\node at (0.3,.35) {$\young(::\empty,:~,~~)$};
\node at (1.3,.35) {$\young(:\empty,~,~)$};
\node at (2.15,.35) {$\young(:\empty,:\empty,~)$};
\node at (3.2,.175) {$\scalemath{1}{\vec{0}}$};
\node at (1.5,-.5) {\scalemath{1}{1}};
\node at (-1.8,-.8) {$\scalemath{1}{\vec{0}}$};  
\node at (-.85,-.68) {$\young(:\empty,:\empty,\empty)$};
\node at (0.25,-.68) {$\young(::\empty,::\empty,~~)$};
\node at (1.2,-.68) {$\young(:\empty,:\empty,~)$};
\node at (2.25,-.68) {$\young(:~,:\empty,~)$};
\node at (3.2,-.825) {\yng(1)};
\node at (4.2,-.8) {$\scalemath{1}{\vec{0}}$};
\node at (2.5,-1.5) {\scalemath{1}{1}};
\node at (-.8,-1.8) {$\scalemath{1}{\vec{0}}$};  
\node at (0.2,-1.65) {$\young(:\empty,:\empty,~)$};
\node at (1.3,-1.8) {$\scalemath{1}{\vec{0}}$};
\node at (2.2,-1.8) {$\yng(1)$};
\node at (3.2,-1.75) {\yng(1,1)};
\node at (4.2,-1.825) {\yng(1)};
\node at (5.2,-1.8) {$\scalemath{1}{\vec{0}}$};
\node at (.5,-2.5) {\scalemath{1}{1}};
\node at (6.2,-2.8) {$\scalemath{1}{\vec{0}}$};
\node at (5.2,-2.8) {\yng(1)};
\node at (4.3,-2.8) {\yng(2)};
\node at (3.3,-2.775) {\yng(2,1)};
\node at (2.3,-2.8) {\yng(2)};
\node at (1.2,-2.8) {\yng(1)};
\node at (.2,-2.8) {$\scalemath{1}{\vec{0}}$};
\draw (0,-3)--(6,-3);
\draw (-6,3)--(0,3);

\node at (-3.5,-1) {$\sigma=231$};
\node at (3,2) {$\pi=645231$};
\end{tikzpicture}}
\end{minipage}
\begin{minipage}{.49\textwidth}
\scalemath{.7}{
\begin{tikzpicture}
\draw[step=1cm] (-5,2) grid (0,3);
\draw[step=1cm] (-4,1) grid (1,2);
\draw[step=1cm] (-3,0) grid (2,1);
\draw[step=1cm] (-2,-1) grid (3,0);
\draw[step=1cm] (-1,-2) grid (4,-1);
\draw[step=1cm] (0,-3) grid (5,-2);
\draw (0,-3)--(6,-3);
\draw (-6,3)--(0,3);

\draw[blue,ultra thick] (0,0)--(0,-3)--(3,-3)--(3,0)--cycle;
\draw[red,ultra thick] (0,0)--(0,3)--(-3,3)--(-3,0)--cycle;
\Yboxdim{2mm}
\node at (-5.8,3.2) {$\scalemath{1}{\vec{0}}$}; 
\node at (-4.8,3.2) {$\yng(1)$};
\node at (-3.7,3.225) {$\yng(1,1)$};
\node at (-2.7,3.225) {$\yng(2,1)$};
\node at (-1.7,3.225) {$\yng(1,1)$};
\node at (0-.7,3.2) {$\yng(1)$};
\node at (0.2,3.2) {$\scalemath{1}{\vec{0}}$};
\node at (-1.5,2.5) {\scalemath{1}{1}};
\node at (-4.8,2.2) {$\scalemath{1}{\vec{0}}$}; 
\node at (-3.85,2.15) {$\yng(1)$};
\node at (-2.75,2.15) {$\yng(2)$};
\node at (-1.85,2.15) {$\yng(1)$};
\node at (-.75,2.35) {$\young(:~,:\empty,~)$};
\node at (0.2,2.325) {$\young(:\empty,:\empty,~)$};
\node at (1.2,2.2) {$\scalemath{1}{\vec{0}}$};
\node at (-.4,1.5) {\scalemath{1}{1}};
\node at (-3.8,1.2) {$\scalemath{1}{\vec{0}}$};  
\node at (-2.8,1.15) {$\yng(1)$};
\node at (-1.7,1.2) {$\scalemath{1}{\vec{0}}$};
\node at (-.85,1.35) {$\young(:\empty,:\empty,~)$};
\node at (0.3,1.35) {$\young(:\empty,~,~)$};
\node at (1.2,1.35) {$\young(:\empty,:\empty,~)$};
\node at (2.2,1.2) {$\scalemath{1}{\vec{0}}$};
\node at (-2.5,.5) {\scalemath{1}{1}};
\node at (-2.8,.2) {$\scalemath{1}{\vec{0}}$};  
\node at (-1.85,.35) {$\young(:\empty,:\empty,\empty)$};
\node at (-.79,.35) {$\young(::\empty,::\empty,~~)$};
\node at (0.25,.35) {$\young(::\empty,:~,~~)$};
\node at (1.25,.35) {$\young(::\empty,::\empty,~~)$};
\node at (2.2,.35) {$\young(:\empty,:\empty,~)$};
\node at (3.2,.2) {$\scalemath{1}{\vec{0}}$};
\node at (2.5,-.5) {\scalemath{1}{1}};
\node at (-1.8,-.8) {$\scalemath{1}{\vec{0}}$};  
\node at (-.8,-.675) {$\young(:\empty,:\empty,\empty)$};
\node at (0.2,-.675) {$\young(:\empty,~,~)$};
\node at (1.2,-.675) {$\young(:\empty,:\empty,~)$};
\node at (2.3,-.8) {$\scalemath{1}{\vec{0}}$};
\node at (3.15,-.85) {\yng(1)};
\node at (4.2,-.8) {$\scalemath{1}{\vec{0}}$};
\node at (.5,-1.5) {\scalemath{1}{1}};
\node at (-.8,-1.8) {$\scalemath{1}{\vec{0}}$};  
\node at (0.2,-1.65) {$\young(:\empty,:\empty,~)$};
\node at (1.275,-1.65) {$\young(:~,:\empty,~)$};
\node at (2.2,-1.8) {$\yng(1)$};
\node at (3.3,-1.8) {\yng(2)};
\node at (4.2,-1.8) {\yng(1)};
\node at (5.2,-1.8) {$\scalemath{1}{\vec{0}}$};
\node at (1.5,-2.5) {\scalemath{1}{1}};
\node at (6.2,-2.8) {$\scalemath{1}{\vec{0}}$};
\node at (5.2,-2.8) {\yng(1)};
\node at (4.3,-2.75) {\yng(1,1)};
\node at (3.3,-2.75) {\yng(2,1)};
\node at (2.3,-2.75) {\yng(1,1)};
\node at (1.2,-2.8) {\yng(1)};
\node at (.2,-2.8) {$\scalemath{1}{\vec{0}}$};
\draw (0,-3)--(6,-3);
\draw (-6,3)--(0,3);

\node at (-3.5,-1) {$\sigma=312$};
\node at (3,2) {$\pi=564312$};
\end{tikzpicture}
}

\scalemath{.7}{
\begin{tikzpicture}
\draw[step=1cm] (-5,2) grid (0,3);
\draw[step=1cm] (-4,1) grid (1,2);
\draw[step=1cm] (-3,0) grid (2,1);
\draw[step=1cm] (-2,-1) grid (3,0);
\draw[step=1cm] (-1,-2) grid (4,-1);
\draw[step=1cm] (0,-3) grid (5,-2);
\draw (0,-3)--(6,-3);
\draw (-6,3)--(0,3);

\draw[blue,ultra thick] (0,0)--(0,-3)--(3,-3)--(3,0)--cycle;
\draw[red,ultra thick] (0,0)--(0,3)--(-3,3)--(-3,0)--cycle;
\Yboxdim{2mm}
\node at (-5.8,3.2) {$\scalemath{1}{\vec{0}}$}; 
\node at (-4.8,3.2) {$\yng(1)$};
\node at (-3.8,3.25) {$\yng(1,1)$};
\node at (-2.7,3.25) {$\yng(2,1)$};
\node at (-1.7,3.2) {$\yng(2)$};
\node at (0-.8,3.2) {$\yng(1)$};
\node at (0.2,3.2) {$\scalemath{1}{\vec{0}}$};
\node at (-2.5,2.5) {\scalemath{1}{1}};
\node at (-4.8,2.2) {$\scalemath{1}{\vec{0}}$}; 
\node at (-3.8,2.2) {$\yng(1)$};
\node at (-2.7,2.2) {$\yng(2)$};
\node at (-1.65,2.35) {$\young(:~~,:\empty,~)$};
\node at (-.75,2.35) {$\young(:~,:\empty,~)$};
\node at (0.15,2.35) {$\young(:\empty,:\empty,~)$};
\node at (1.2,2.2) {$\scalemath{1}{\vec{0}}$};
\node at (-.4,1.5) {\scalemath{1}{1}};
\node at (-3.8,1.2) {$\scalemath{1}{\vec{0}}$};  
\node at (-2.85,1.15) {$\yng(1)$};
\node at (-1.75,1.325) {$\young(:~,:\empty,~)$};
\node at (-.8,1.325) {$\young(:\empty,:\empty,~)$};
\node at (0.2,1.325) {$\young(:\empty,~,~)$};
\node at (1.2,1.325) {$\young(:\empty,:\empty,~)$};
\node at (2.2,1.2) {$\scalemath{1}{\vec{0}}$};
\node at (-1.5,.5) {\scalemath{1}{1}};
\node at (-2.8,.2) {$\scalemath{1}{\vec{0}}$};  
\node at (-1.8,.325) {$\young(:\empty,:\empty,\empty)$};
\node at (-.8,.325) {$\young(:\empty,~,~)$};
\node at (0.3,.325) {$\young(::\empty,:~,~~)$};
\node at (1.3,.325) {$\young(::\empty,::\empty,~~)$};
\node at (2.2,.325) {$\young(:\empty,:\empty,~)$};
\node at (3.2,.2) {$\scalemath{1}{\vec{0}}$};
\node at (.6,-.5) {\scalemath{1}{1}};
\node at (-1.8,-.8) {$\scalemath{1}{\vec{0}}$};  
\node at (-.8,-.675) {$\young(:\empty,:\empty,\empty)$};
\node at (0.275,-.675) {$\young(::\empty,::\empty,~~)$};
\node at (1.325,-.675) {$\young(::~,::\empty,~~)$};
\node at (2.275,-.675) {$\young(:~,:\empty,~)$};
\node at (3.2,-.675) {\yng(1)};
\node at (4.2,-.8) {$\scalemath{1}{\vec{0}}$};
\node at (2.5,-1.5) {\scalemath{1}{1}};
\node at (-.8,-1.8) {$\scalemath{1}{\vec{0}}$};  
\node at (0.175,-1.625) {$\young(:\empty,:\empty,~)$};
\node at (1.275,-1.675) {$\young(:~,:\empty,~)$};
\node at (2.2,-1.825) {$\yng(1)$};
\node at (3.2,-1.775) {\yng(1,1)};
\node at (4.2,-1.825) {\yng(1)};
\node at (5.2,-1.8) {$\scalemath{1}{\vec{0}}$};
\node at (1.5,-2.5) {\scalemath{1}{1}};
\node at (.2,-2.8) {$\scalemath{1}{\vec{0}}$};
\node at (1.2,-2.8) {\yng(1)};
\node at (2.3,-2.775) {\yng(1,1)};
\node at (3.3,-2.775) {\yng(2,1)};
\node at (4.3,-2.8) {\yng(2)};
\node at (5.2,-2.8) {\yng(1)};
\node at (6.2,-2.8) {$\scalemath{1}{\vec{0}}$};
\draw (0,-3)--(6,-3);
\draw (-6,3)--(0,3);

\node at (-3.5,-1) {$\sigma=132$};
\node at (3,2) {$\pi=465132$};
\end{tikzpicture}
}

\scalemath{.7}{
\begin{tikzpicture}
\draw[step=1cm] (-5,2) grid (0,3);
\draw[step=1cm] (-4,1) grid (1,2);
\draw[step=1cm] (-3,0) grid (2,1);
\draw[step=1cm] (-2,-1) grid (3,0);
\draw[step=1cm] (-1,-2) grid (4,-1);
\draw[step=1cm] (0,-3) grid (5,-2);
\draw (0,-3)--(6,-3);
\draw (-6,3)--(0,3);

\draw[blue,ultra thick] (0,0)--(0,-3)--(3,-3)--(3,0)--cycle;
\draw[red,ultra thick] (0,0)--(0,3)--(-3,3)--(-3,0)--cycle;
\Yboxdim{2mm}
\node at (-5.8,3.2) {$\scalemath{1}{\vec{0}}$}; 
\node at (-4.8,3.2) {$\yng(1)$};
\node at (-3.7,3.2) {$\yng(2)$};
\node at (-2.7,3.25) {$\yng(2,1)$};
\node at (-1.7,3.25) {$\yng(1,1)$};
\node at (0-.7,3.2) {$\yng(1)$};
\node at (0.2,3.2) {$\scalemath{1}{\vec{0}}$};
\node at (-1.5,2.5) {\scalemath{1}{1}};
\node at (-4.8,2.2) {$\scalemath{1}{\vec{0}}$}; 
\node at (-3.8,2.2) {$\yng(1)$};
\node at (-2.8,2.25) {$\yng(1,1)$};
\node at (-1.8,2.2) {$\yng(1)$};
\node at (-.725,2.35) {$\young(:~,:\empty,~)$};
\node at (0.175,2.35) {$\young(:\empty,:\empty,~)$};
\node at (1.2,2.2) {$\scalemath{1}{\vec{0}}$};
\node at (-2.4,1.5) {\scalemath{1}{1}};
\node at (-3.8,1.2) {$\scalemath{1}{\vec{0}}$};  
\node at (-2.8,1.2) {$\yng(1)$};
\node at (-1.725,1.35) {$\young(:~,:\empty,~)$};
\node at (-.675,1.35) {$\young(::~,::\empty,~~)$};
\node at (0.25,1.35) {$\young(::\empty,::\empty,~~)$};
\node at (1.2,1.35) {$\young(:\empty,:\empty,~)$};
\node at (2.2,1.2) {$\scalemath{1}{\vec{0}}$};
\node at (-.4,.5) {\scalemath{1}{1}};
\node at (-2.8,.2) {$\scalemath{1}{\vec{0}}$};  
\node at (-1.8,.35) {$\young(:\empty,:\empty,\empty)$};
\node at (-.75,.35) {$\young(::\empty,::\empty,~~)$};
\node at (0.3,.35) {$\young(::\empty,:~,~~)$};
\node at (1.2,.35) {$\young(:\empty,~,~)$};
\node at (2.2,.35) {$\young(:\empty,:\empty,~)$};
\node at (3.2,.2) {$\scalemath{1}{\vec{0}}$};
\node at (1.5,-.5) {\scalemath{1}{1}};
\node at (-1.8,-.8) {$\scalemath{1}{\vec{0}}$};  
\node at (-.8,-.65) {$\young(:\empty,:\empty,\empty)$};
\node at (0.2,-.65) {$\young(:\empty,~,~)$};
\node at (1.2,-.65) {$\young(:\empty,:\empty,~)$};
\node at (2.3,-.65) {$\young(:~,:\empty,~)$};
\node at (3.2,-.8) {\yng(1)};
\node at (4.2,-.8) {$\scalemath{1}{\vec{0}}$};
\node at (.5,-1.5) {\scalemath{1}{1}};
\node at (-.8,-1.8) {$\scalemath{1}{\vec{0}}$};  
\node at (0.2,-1.65) {$\young(:\empty,:\empty,~)$};
\node at (1.275,-1.65) {$\young(:~,:\empty,~)$};
\node at (2.325,-1.65) {$\young(:~~,:\empty,~)$};
\node at (3.275,-1.825) {\yng(2)};
\node at (4.175,-1.825) {\yng(1)};
\node at (5.2,-1.8) {$\scalemath{1}{\vec{0}}$};
\node at (2.5,-2.5) {\scalemath{1}{1}};
\node at (.2,-2.8) {$\scalemath{1}{\vec{0}}$};
\node at (1.2,-2.8) {\yng(1)};
\node at (2.3,-2.8) {\yng(2)};
\node at (3.3,-2.775) {\yng(2,1)};
\node at (4.3,-2.775) {\yng(1,1)};
\node at (5.2,-2.8) {\yng(1)};
\node at (6.2,-2.8) {$\scalemath{1}{\vec{0}}$};
\draw (0,-3)--(6,-3);
\draw (-6,3)--(0,3);

\node at (-3.5,-1) {$\sigma=213$};
\node at (3,2) {$\pi=546213$};
\end{tikzpicture}
}
\end{minipage}
\caption{The six affine growth diagrams of the Robinson--Schensted correspondence for $k=3$. \label{fig:RS}}
\end{figure}

\subsection{Robinson--Schensted--Knuth Generalizations \label{sec:RSK}}

We now drop the assumption that each $\lambda^i$ is equal to $\omega_1$ or $\omega_1^*$ and prove the analogue of Theorem \ref{thm:bijection} in this general setting. To this end, we briefly review how to extend Fomin growth diagrams to natural number matrices by applying Knuth's relabelling scheme \cite{Knu}, as shown in \cite{Rob} (see also \cite{Ful}). For a matrix $M$ of naturals construct a permutation matrix $M'$ as follows. Let $r_i$ (resp. $c_i$) denote the sum of the entries in the $i$th row (resp. $i$th column) of $M$ and for each row (resp. column) of $M$ create $r_i$ rows (resp. $c_i$ columns) in $M'$, so that $M'$ is an $n\times n$ matrix where $n$ is equal to the sum of all the entries of $M$. Each entry $M_{i,j}$ is replaced by $M_{i,j}$ many $1$'s in a diagonal such that for a pair of indices $(i,j)\not=(l,k)$, if $i\leq l$ and $j\leq k$, then all of the $1$ entries of $M'$ corresponding to $M_{i,j}$ occur strictly northwest of the $1$'s corresponding to $M_{l,k}$. 

Let $M'(i,j)$ be the result of applying this relabelling scheme to $M(i,j)$, the northwest-justified submatrix of $M$. Define a \emph{generalized Fomin diagram} to be the set of partitions $\alpha_{i,j}$, one for each vertex $(i,j)$ of the array containing the matrix $M$, where $\alpha_{i,j}=\lambda(M'(i,j)_{\searrow})$. Equivalently, the partitions $\alpha_{i,j}$ can be calculated by applying Fomin's growth rules to the partial permutation matrix $M'(i,j)$. The following lemma, stated without proof, shows that two adjacent partitions in the diagram differ by a vertical strip.

\begin{Lem}[{\cite[Thm.~4.1.4]{Rob}}]\label{lem:consecutive}
In a Fomin growth diagram, let $\mu^i,\mu^{i+1},\ldots, \mu^j$ be consecutive shapes growing along the bottom of the $n\times n$ grid and let the partial permutation in the $j-i$ columns above them be given by $w_i,w_{i+1},\ldots,w_j$. If $w_i<w_{i+1}<\ldots<w_j$, then $\mu^j$ and $\mu^i$ differ by a vertical strip and the boxes are added sequentially in $\mu^i,\mu^{i+1},\ldots,\mu^j$ from top to bottom. 
\end{Lem}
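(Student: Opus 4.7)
The strategy is to reduce the lemma to the classical fact that column-inserting a strictly increasing sequence of values into a Young tableau adds a vertical strip, with the new boxes appearing in strict top-to-bottom order.

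First I would confirm that each single step $\mu^k \to \mu^{k+1}$ adds exactly one box to the shape. Propagating the transpose Fomin local rules up the $(k{+}1)$-st column of the grid, the new mark at row $w_{k+1}$ triggers rule $(3)$ and appends one box to the first column; rules $(1)$ and $(2)$ then carry this single-box difference down to the bottom edge. Thus $\mu^i \subset \mu^{i+1}\subset \cdots \subset \mu^j$ with $|\mu^{k+1}| - |\mu^k|=1$ for each $k$. In particular the boxes of $\mu^j/\mu^i$ can be indexed by the steps at which they appear.

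Next I would invoke the transpose of the classical equivalence between Fomin growth and Schensted row insertion (the row-insertion case is in the appendix of \cite{Rob}) to identify $\mu^k$ with the shape obtained by column-inserting the partial permutation $w_1, w_2, \ldots, w_k$ into the empty tableau; this transpose follows formally by reflecting the diagram across the main diagonal and swapping the roles of rows and columns throughout the local rules. Under this identification, $\mu^j/\mu^i$ records precisely the new boxes produced by column-inserting the increasing values $w_{i+1} < w_{i+2} < \cdots < w_j$ into a tableau of shape $\mu^i$. The lemma then reduces to the standard Pieri-type statement: when $v_k < v_{k+1}$, the column-insertion bump path of $v_{k+1}$ (after $v_k$ has been inserted) lies strictly below, and weakly to the right of, the bump path of $v_k$ in every column each visits. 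Indeed, in any column the pre-existing entries are strictly increasing top-to-bottom, so $v_{k+1}$ either bumps an entry strictly larger than the one $v_k$ bumped (hence strictly lower in that column) or settles at the bottom of a column not reached by $v_k$; passing this comparison downward column by column shows the terminal new box of $v_{k+1}$ sits strictly below that of $v_k$.

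The main obstacle is the careful comparison of the two bump paths across all columns they may visit, but this is the transpose of a textbook property of Schensted insertion on an increasing word, so once the reduction to column insertion is in place the remaining work is standard. Combining strict-southward monotonicity of the new boxes with the single-box-per-step property yields both the vertical-strip conclusion and the top-to-bottom ordering asserted by the lemma.
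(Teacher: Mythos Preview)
The paper does not supply its own proof of this lemma; it simply quotes \cite[Thm.~4.1.4]{Rob} and uses the statement as a black box. Your reduction to the column-insertion form of the Row Bumping Lemma is exactly the standard argument behind Roby's result, so the overall strategy is sound and there is nothing in the paper to compare against.

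One small correction to your bump-path analysis: when $v_k<v_{k+1}$ are column-inserted in that order, the new box created by $v_{k+1}$ lands strictly \emph{below} and weakly to the \emph{left} of the one created by $v_k$ (this is the transpose of the usual Row Bumping Lemma, in which the later box sits strictly to the right and weakly above). Consequently $v_{k+1}$'s bump path never visits a column beyond those visited by $v_k$, so your alternative ``settles at the bottom of a column not reached by $v_k$'' does not actually arise. This slip is harmless for the lemma: only the ``strictly below'' conclusion is needed to see that the successive new cells occupy distinct, increasing rows, which is precisely the vertical-strip-added-top-to-bottom statement.
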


These generalized Fomin growth diagrams realize the full Robinson--Schensted--Knuth correspondence, a content preserving bijection between $\mathbb N$-matrices of size $n$ and pairs of same-shape semistandard Young tableaux on $n$ boxes. We will show that they appear as submatrices of affine growth diagrams with first row an oscillating tableau consisting of $k$ increasing steps followed by $n-k$ decreasing steps. More generally, as mentioned in \S\ref{sec:combinatorial background}, there is a bijection between size $n$ semistandard oscillating tableaux and symmetric $\mathbb N$-matrices such that $r_i=0$ or $c_i=0$ for all $i$. 

Let $\vec{a}=(a_1,\ldots,a_n)$ be a sequence of nonzero integers. Let $\mathcal A_{\vec{a}, m}$ be the set of $GL_m$ affine growth diagrams such that the first row is an oscillating tableau, and the type $\vec\lambda$ is defined by $\lambda^i=\omega_{a_i}$ if $a_i$ is positive and $\lambda^i=\omega_{\lvert a_i\rvert}^*$ if $a_i$ is negative. Note that the sum of the positive entries of $\vec{a}$ has to equal the sum of the negative entries for $\mathcal A_{\vec{a},m}$ to be nonempty. 

As before for an $n\times n$ matrix $M$, let $r_i$ denote the partial row sum $\sum_{j=i}^n M_{i,j}$ and $c_i$ the partial column sum $\sum_{j=1}^i M_{j,i}$. Let $\mathcal N_{\vec{a}}$ denote the set of $n\times n$ natural-number symmetric matrices such that $r_i=a_i$ and $c_i=0$ if $a_i$ is positive, and $r_i=0, c_i=\lvert a_i\rvert$ if $a_i$ is negative. Note that these conditions force zeros in the main diagonal. The matrices in $\mathcal N_{\vec{a}}$ can be thought of as the natural-number analogs of fixed-point-free involutions.

\begin{Def}
Let $\Phi_{\vec{a},m}:\mathcal A_{\vec{a},m}\rightarrow \mathcal N_{\vec{a}}$ denote the map producing the entries $n_{i,j}$, as previously defined in \S\ref{sec:filling}. Define an inverse map $\Psi_{\vec{a},m}:\mathcal N_{\vec{a}}\rightarrow \mathcal A_{\vec{a},m}$ for $m\geq \sum_{a_i>0}a_i$ as follows. Given a matrix $N$ in $\mathcal N_{\vec{a}}$ write the entries into a staircase diagram as in the previous section and extend periodically. Define $\gamma_{i,j}^+$ (resp. $\gamma_{i,j}^-$) by applying the relabelling scheme to $M(i,j)^+$ (resp. $M(i,j)^-$) to get a partial permutation matrix and taking the associated partition given by Greene's theorem. Set $\gamma_{i,j}=\gamma_{i,j}^++\gamma_{i,j}^-$.
\end{Def}

\begin{Thm}\label{thm:inverse map}
For fixed $\vec{a}$ and $m\geq \sum_{a_i>0}a_i$, we have $\Phi_{\vec{a},m}\circ\Psi_{\vec{a},m}(N)=N$. These maps are bijections.
\end{Thm}

\begin{proof}
The proof is an application of the relabeling scheme, followed by an application of Theorem \ref{thm:bijection}, and finally contracting the resulting affine growth diagram.  

Consider an empty staircase diagram of size $n$ and for $1\leq i\leq n$ and $i+1\leq j\leq i+n$ let $n_{i,j}=N_{\overline{i},\overline{j}}$ be the entries given by $N$ where $\overline{j}=j \mod n$. Apply the expansion scheme to get a larger staircase diagram, so that a row (resp. column) with sum $r$ (resp. $c$) in the old diagram is replaced by $r$ (resp. $c$) individual rows (resp. columns) in the new diagram. The resulting new diagram has $0,1$ entries such that each $n_{i,j}$ entry of the old diagram becomes a diagonal of $n_{i,j}$ many $1$'s. The $1$'s corresponding to entry $n_{i,j}$ appear strictly northwest of the $1$'s corresponding to an entry $n_{l,k}$ for $i\leq l$ and $j\leq k$. This process is reversible by coalescing consecutive rows and columns according to the content $\vec a$.

Applying Theorem \ref{thm:bijection} gives an affine growth diagram $\Gamma$ where each $\lambda^i$ is $\omega_1$ or $\omega_1^*$. Let $\gamma_{i,j}$ and $\gamma_{i,j+l}$ be weights labelling vertices of $\Gamma$ that correspond to adjacent vertices of the original width-$n$ staircase diagram. By Lemma \ref{lem:consecutive} these weights differ by a vertical strip. Construct a diagram $\Gamma'$ by forgetting all vertex labels in $\Gamma$ other than those corresponding to the original vertices and contracting. One checks that the resulting diagram $\Gamma'$ satisfies the local condition and is of type $\vec\lambda$ given by $\vec{a}$. We leave the details to the reader. This process is also reversible, since starting with a diagram from $\mathcal A_{\vec{a},m}$, if $\gamma_{i,j+1}-\gamma_{i,j}$ is a positive strip of length $l$, then this single step can be replaced by $l$ steps where $1$'s are added sequentially to $\gamma_{i,j}$ from top to bottom to the positions given by the positive strip. Similar reasoning applies if $\gamma_{i,j+1}-\gamma_{i,j}$ is a negative strip.
\end{proof}

For an oscillating tableau $\vec\mu$ let the \emph{content} be the sequence $\vec{a}=(a_1,\ldots,a_n)$ where $a_i$ is the signed size of the vertical strip $\mu^{i+1}-\mu^{i}$.

\begin{Cor}
There is a content preserving bijection between $\bigcup_{\vec{a}}\mathcal N_{\vec{a}}$ and length $n$ semistandard oscillating tableaux.
\end{Cor}

Finally, consider the content $\vec{a}$ where the first $k$ entries are positive and the last $n-k$ entries are negative. According to the construction $\Psi_{\vec{a},m}$ the first line of an affine growth diagram in $\mathcal A_{\vec{a},m}$ is an oscillating tableau that increases for $k$ steps and then decreases for $n-k$ steps, and can therefore be viewed as a pair of same-shape (row-strict) semistandard Young tableaux. 

\begin{Thm}\label{thm:generalized Fomin}
Suppose that the first $k$ integers of $\vec a=(a_1,\ldots,a_n)$ are positive and the last $n-k$ are negative. Suppose $m\geq \sum_{a_i>0}a_i$ and let $\vec\lambda$ be given according to $\vec a$. For an affine growth diagram in $\mathcal A_{\vec{a},m}$ the partitions $\gamma_{i,j}^+$ (resp. $\gamma_{i,j}^-$) for $1\leq i\leq k+1$ and $k+1\leq j\leq n+1$ form a generalized Fomin--Roby growth diagram growing from the southeast to the northwest (resp. from the northwest to the southeast). Likewise the $\gamma_{i,j}^+$ (resp. $\gamma_{i,j}^-$) for $k+1\leq i\leq n+1$, $n+1\leq j\leq n+1+k$ form a generalized Fomin--Roby growth diagram to the southeast (resp. northwest).
\end{Thm}

\section{Proof of Proposition \ref{prop:hive excavation}\label{main proof}}
This final section is devoted to the proof of Proposition \ref{prop:hive excavation}. For ease of notation let $\lambda=\gamma_{2,3}$, $\mu=\gamma_{1,4}$, $\nu=\gamma_{1,3}$, and $\rho=\gamma_{2,4}$.  Fix the orientation of the $4$-hive as in Figures \ref{fig:excavate} and \ref{fig:4hive example}, so that it is balanced on its bottom edge labelled by $\rho$ and is viewed from above. The northern face of the two closer faces will be called the top face A and the southern the top face B. The two farther faces will be called the left bottom face and the right bottom face. 

Recall that the successive differences of the hive values along the edges give dominant weights. The bottom edge is labelled by $\rho$ when read top to bottom. We will read the successive differences along all other edges from left to right given the fixed orientation of the hive. Let N, E, S, W denote the compass directions. Suppose that the NW, NE, and S edges of face A are labelled respectively by $\omega_r, \lambda,$ and $\nu$. Let the SW, SE, and N edges of face B be labelled respectively by $\mu, \omega_s,$ and $\nu$. We will verify that the local condition holds for this setup. There are three other cases, involving dual fundamental weights in place of $\omega_r$, $\omega_s$, but these are similar.

Consider the face-A $3$-hive with boundary weights $\omega_r, \lambda$, and $\nu$. The NW edge has differences $\omega_r$ when read from SW to NE, so the integer labels along this edge increase by one for $r$ steps and are constant thereafter. A lemma in \cite{KTW} gives an easy description of the differences along the strip one step in from this NW external edge. The proof is not difficult and is omitted.
\begin{Lem}[{\cite[Lem.~2]{KTW}}]\label{lem:strips}
If the NW external edge of a $3$-hive has differences $\omega_r$, then the SW-to-NE-oriented strip one step in from the NW edge has differences $\omega_r$ or $\omega_{r-1}$. Furthermore, the first case occurs if and only if $\nu_1=\lambda_1$ and the second case if and only if $\nu_1=\lambda_1+1$.
\end{Lem}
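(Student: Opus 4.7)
The approach is to apply the two rhombus inequalities that involve an outer unit edge and the adjacent inner unit edge, which pins down the inner strip differences, and then compare the total inner weight to what the boundary data forces.

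Label the hive values along the NW edge as $a_0, a_1, \ldots, a_m$ and those along the parallel strip one step inward as $b_0, \ldots, b_{m-1}$. Set $d_i := a_i - a_{i-1}$ and $e_i := b_i - b_{i-1}$; by hypothesis $(d_1, \ldots, d_m) = (1,\ldots,1,0,\ldots,0)$ with $j$ ones. For each $i = 1, \ldots, m-1$ there are two unit rhombi straddling the NW edge and the inner strip, with outer sides $a_{i-1} a_i$ and $a_i a_{i+1}$ respectively, each sharing the inner side $b_{i-1} b_i$. Identifying the short diagonal in each case (in the first rhombus it is the cross-edge $a_i b_{i-1}$ and in the second it is $a_i b_i$) and applying the rhombus inequality yields
\[
d_{i+1} \le e_i \le d_i.
\]
Plugging in the staircase values of $d$ forces $e_i = 1$ for $i < j$, $e_i = 0$ for $i > j$, and $e_j \in \{0,1\}$. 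The sequence $(e_1, \ldots, e_{m-1})$ is therefore either $\omega_j$ or $\omega_{j-1}$, proving the first assertion.

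For the distinguishing criterion, compute the total $b_{m-1} - b_0 = (j-1) + e_j$, which equals $j$ in the $\omega_j$ case and $j-1$ in the $\omega_{j-1}$ case. On the other hand, $b_0$ and $b_{m-1}$ are the first interior lattice points on the two edges adjacent to the NW edge, so they are determined by the corner values on the $\omega_j$ edge (which, after normalizing the hive constant, are $0$ and $j$) together with the first differences $\nu_1$ and $\lambda_1$ of the two adjacent boundary weights. A direct check gives $b_{m-1} - b_0 = j + \lambda_1 - \nu_1$. Equating the two expressions yields $e_j = 1 + \lambda_1 - \nu_1$, so $e_j = 1$ (the $\omega_j$ case) iff $\nu_1 = \lambda_1$, and $e_j = 0$ (the $\omega_{j-1}$ case) iff $\nu_1 = \lambda_1 + 1$.

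The only really fiddly step is matching the reading directions of $\lambda$ and $\nu$ in figure \ref{hive setup 1} so that the boundary calculation produces exactly $+\lambda_1 - \nu_1$ (as opposed to a sign-flipped variant); this is a pure convention-check, not a conceptual obstacle.
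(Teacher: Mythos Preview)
Your argument is correct. The paper does not supply its own proof of this lemma; it is quoted verbatim from \cite{KTW} and used as a black box in \S7, so there is no in-paper proof to compare against. Your approach---sandwiching each inner difference $e_i$ between the adjacent outer differences $d_i$ and $d_{i+1}$ via the two rhombus inequalities, and then reading off the single undetermined value $e_j$ from the endpoint identity $b_{m-1}-b_0 = j + \lambda_1 - \nu_1$---is exactly the natural route and is essentially what the cited lemma in \cite{KTW} does. The identification of the two rhombi and their short diagonals is accurate, and your endpoint bookkeeping ($b_0 = a_0 + \nu_1$, $b_{m-1} = a_m + \lambda_1$, $a_m - a_0 = j$) matches the orientation conventions used in figure~\ref{hive setup 1}.
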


Let the second strip of face A be $\omega_{r'}$ where $r'$ must be $r$ or $r-1$. Since this strip can be viewed as the NW external edge of a smaller $3$-hive contained in face A, applying the lemma again implies that the third strip must have differences $\omega_{r'}$ or $\omega_{r'-1}$. This pattern continues so that there are $r$ decrements from the first strip to the $(n+1)$st strip, which is just a single lattice point. A decrement occurs between strip $k$ and $k+1$ if and only if $\nu_k=\lambda_k+1$. Figure \ref{top break paths} shows the top of a $4$-hive for $n=5$, $k=3$, $\lambda=(3,3,1,1,0)$, and $\nu=(4,3,2,2,0)$. The blue path indicates the lattice points after which the strips are constant. We call this the \emph{break path}. The break path determines the difference $\nu - \lambda$.

Likewise, face B of the $4$-hive has $\omega_s$ along its SE edge with $\omega_s$ being read from the SW to the NE. The strip one step in from this boundary edge is either $\omega_s$ or $\omega_{s-1}$, and so forth with $s$ decrements between the first strip and the $n+1$st strip. The break path of face B connects the SE external edge to the western lattice point. In order for the numbering of the strips to coincide with that of face A, we can view the break path of face B as starting at the western lattice point, thought of as labelled by $\omega_0$, so that there are $s$ increments before reaching $\omega_s$ on the SE edge. As with face A, there is an increase between strip $k$ and $k+1$ if and only if $\nu_k=\mu_k+1$, and no increase if and only if $\nu_k=\mu_k$. Thus, $\nu-\mu$ is determined by the face B break path. Figure \ref{top break paths} shows the top view of a $4$-hive with face B for $s=2$, $\mu=(4,2,2,1,0)$. 

\begin{figure}[t!]
\centering
\minipage[b]{0.5\textwidth}
\centering
\scalemath{.4}{
\begin{tikzpicture}
  \begin{ternaryaxis}[ylabel=$\scalemath{2}{\omega_3}$,xlabel=$\scalemath{2}{\lambda}$,zlabel=$\scalemath{2}{\nu}$,
    clip=false,xticklabels={},yticklabels={},zticklabels={},xmin=0,
    ymin=0,
    zmin=0,
    xmax=10,
    ymax=10,
    zmax=10,
    xtick={2,4,6,8},
    ytick={2,4,6,8},
    ztick={2,4,6,8},
    major tick length=0,
]    
  	\node at (axis cs:0,24,0) {$\scalemath{1.75}{0}$};
	\node at (axis cs:2,22,0) {$\scalemath{1.75}{1}$};
	\node at (axis cs:4,20,0) {$\scalemath{1.75}{2}$};
	\node at (axis cs:6,18,0) {$\scalemath{1.75}{3}$};
	\node at (axis cs:8,16,0) {$\scalemath{1.75}{3}$};
	\node at (axis cs:10,14,0) {$\scalemath{1.75}{3}$};
	
	\node at (axis cs:0,22,2) {$\scalemath{1.75}{4}$};
	\node at (axis cs:2,20,2) {$\scalemath{1.75}{5}$};
	\node at (axis cs:4,18,2) {$\scalemath{1.75}{6}$};
	\node at (axis cs:6,16,2) {$\scalemath{1.75}{6}$};
	\node at (axis cs:8,14,2) {$\scalemath{1.75}{6}$};
	
	\node at (axis cs:0,20,4) {$\scalemath{1.75}{7}$};
	\node at (axis cs:2,18,4) {$\scalemath{1.75}{8}$};
	\node at (axis cs:4,16,4) {$\scalemath{1.75}{9}$};
	\node at (axis cs:6,14,4) {$\scalemath{1.75}{9}$};
	
	\node at (axis cs:0,18,6) {$\scalemath{1.75}{9}$};
	\node at (axis cs:2,16,6) {$\scalemath{1.75}{10}$};
	\node at (axis cs:4,14,6) {$\scalemath{1.75}{10}$};
	
	\node at (axis cs:0,16,8) {$\scalemath{1.75}{11}$};
	\node at (axis cs:2,14,8) {$\scalemath{1.75}{11}$};
	
	\node at (axis cs:0,14,10) {$\scalemath{1.75}{11}$};
	
	\addplot3[color=blue,ultra thick] coordinates {(6,18,0)(4,18,2)(4,16,4)(2,16,6)(0,16,8)(0,14,10)};

  \end{ternaryaxis}
\end{tikzpicture}}

\vspace{-5.2mm}

\scalemath{.4}{
\begin{tikzpicture}
  \begin{ternaryaxis}[ylabel=$\scalemath{2}{\mu}$,xlabel=$\scalemath{2}{\omega_2}$,xlabel style={xshift=0.5cm},
    clip=false,xticklabels={},yticklabels={},zticklabels={},xmin=0,
    ymin=0,
    zmin=0,
    xmax=10,
    ymax=10,
    zmax=10,
    yscale=-1,
    xtick={2,4,6,8},
    ytick={2,4,6,8},
    ztick={2,4,6,8},
    major tick length=0,
]    
  	\node at (axis cs:0,24,0) {$\scalemath{1.75}{0}$};
	\node at (axis cs:2,22,0) {$\scalemath{1.75}{4}$};
	\node at (axis cs:4,20,0) {$\scalemath{1.75}{6}$};
	\node at (axis cs:6,18,0) {$\scalemath{1.75}{8}$};
	\node at (axis cs:8,16,0) {$\scalemath{1.75}{9}$};
	\node at (axis cs:10,14,0) {$\scalemath{1.75}{9}$};
	
	\node at (axis cs:0,22,2) {$\scalemath{1.75}{4}$};
	\node at (axis cs:2,20,2) {$\scalemath{1.75}{7}$};
	\node at (axis cs:4,18,2) {$\scalemath{1.75}{9}$};
	\node at (axis cs:6,16,2) {$\scalemath{1.75}{10}$};
	\node at (axis cs:8,14,2) {$\scalemath{1.75}{10}$};
	
	\node at (axis cs:0,20,4) {$\scalemath{1.75}{7}$};
	\node at (axis cs:2,18,4) {$\scalemath{1.75}{9}$};
	\node at (axis cs:4,16,4) {$\scalemath{1.75}{11}$};
	\node at (axis cs:6,14,4) {$\scalemath{1.75}{11}$};
	
	\node at (axis cs:0,18,6) {$\scalemath{1.75}{9}$};
	\node at (axis cs:2,16,6) {$\scalemath{1.75}{11}$};
	\node at (axis cs:4,14,6) {$\scalemath{1.75}{11}$};
	
	\node at (axis cs:0,16,8) {$\scalemath{1.75}{11}$};
	\node at (axis cs:2,14,8) {$\scalemath{1.75}{11}$};
	
	\node at (axis cs:0,14,10) {$\scalemath{1.75}{11}$};
	
	\addplot3[color=red,ultra thick] coordinates {(0,10,0)(2,8,0)(2,6,2)(4,4,2)(4,2,4)(6,0,4)};

  \end{ternaryaxis}
\end{tikzpicture}}
\subcaption{top faces \label{top break paths}}
\endminipage
\minipage[b]{0.5\textwidth}
\centering
\scalemath{.4}{
\begin{tikzpicture}
  \begin{ternaryaxis}[rotate=-30,ylabel=$\scalemath{2}{\omega_3}$,xlabel=$\scalemath{2}{\rho}$,xlabel style={xshift=-.4cm,yshift=-.2},zlabel=$\scalemath{2}{\mu}$,
    clip=false,xticklabels={},yticklabels={},zticklabels={},xmin=0,
    ymin=0,
    zmin=0,
    xmax=10,
    ymax=10,
    zmax=10,
    xtick={2,4,6,8},
    ytick={2,4,6,8},
    ztick={2,4,6,8},
    major tick length=0,
]    
  	\node at (axis cs:0,24,0) {$\scalemath{1.75}{0}$};
	\node at (axis cs:2,22,0) {$\scalemath{1.75}{1}$};
	\node at (axis cs:4,20,0) {$\scalemath{1.75}{2}$};
	\node at (axis cs:6,18,0) {$\scalemath{1.75}{3}$};
	\node at (axis cs:8,16,0) {$\scalemath{1.75}{3}$};
	\node at (axis cs:10,14,0) {$\scalemath{1.75}{3}$};
	
	\node at (axis cs:0,22,2) {$\scalemath{1.75}{4}$};
	\node at (axis cs:2,20,2) {$\scalemath{1.75}{5}$};
	\node at (axis cs:4,18,2) {$\scalemath{1.75}{6}$};
	\node at (axis cs:6,16,2) {$\scalemath{1.75}{6}$};
	\node at (axis cs:8,14,2) {$\scalemath{1.75}{6}$};
	
	\node at (axis cs:0,20,4) {$\scalemath{1.75}{6}$};
	\node at (axis cs:2,18,4) {$\scalemath{1.75}{7}$};
	\node at (axis cs:4,16,4) {$\scalemath{1.75}{8}$};
	\node at (axis cs:6,14,4) {$\scalemath{1.75}{8}$};
	
	\node at (axis cs:0,18,6) {$\scalemath{1.75}{8}$};
	\node at (axis cs:2,16,6) {$\scalemath{1.75}{9}$};
	\node at (axis cs:4,14,6) {$\scalemath{1.75}{9}$};
	
	\node at (axis cs:0,16,8) {$\scalemath{1.75}{9}$};
	\node at (axis cs:2,14,8) {$\scalemath{1.75}{9}$};
	
	\node at (axis cs:0,14,10) {$\scalemath{1.75}{9}$};
	
	\addplot3[color=blue,ultra thick] coordinates {(6,18,0)(4,18,2)(4,16,4)(2,16,6)(0,16,8)(0,14,10)};

  \end{ternaryaxis}
\end{tikzpicture}}
\hspace{-6.3mm}
\scalemath{.4}{
\begin{tikzpicture}
  \begin{ternaryaxis}[rotate=-30,xlabel=$\scalemath{2}{\omega_2}$,xlabel style={xshift=.5cm,yshift=-.5cm},zlabel=$\scalemath{2}{\lambda}$,zlabel style={yshift=1cm,xshift=.5},
    clip=false,xticklabels={},yticklabels={},zticklabels={},xmin=0,
    ymin=0,
    zmin=0,
    xmax=10,
    ymax=10,
    zmax=10,
    xtick={2,4,6,8},
    ytick={2,4,6,8},
    ztick={2,4,6,8},
    yscale=-1,
    major tick length=0,
]    
  	\node at (axis cs:0,24,0) {$\scalemath{1.75}{3}$};
	\node at (axis cs:2,22,0) {$\scalemath{1.75}{6}$};
	\node at (axis cs:4,20,0) {$\scalemath{1.75}{8}$};
	\node at (axis cs:6,18,0) {$\scalemath{1.75}{9}$};
	\node at (axis cs:8,16,0) {$\scalemath{1.75}{9}$};
	\node at (axis cs:10,14,0) {$\scalemath{1.75}{9}$};
	
	\node at (axis cs:0,22,2) {$\scalemath{1.75}{6}$};
	\node at (axis cs:2,20,2) {$\scalemath{1.75}{9}$};
	\node at (axis cs:4,18,2) {$\scalemath{1.75}{10}$};
	\node at (axis cs:6,16,2) {$\scalemath{1.75}{10}$};
	\node at (axis cs:8,14,2) {$\scalemath{1.75}{10}$};
	
	\node at (axis cs:0,20,4) {$\scalemath{1.75}{9}$};
	\node at (axis cs:2,18,4) {$\scalemath{1.75}{10}$};
	\node at (axis cs:4,16,4) {$\scalemath{1.75}{11}$};
	\node at (axis cs:6,14,4) {$\scalemath{1.75}{11}$};
	
	\node at (axis cs:0,18,6) {$\scalemath{1.75}{10}$};
	\node at (axis cs:2,16,6) {$\scalemath{1.75}{11}$};
	\node at (axis cs:4,14,6) {$\scalemath{1.75}{11}$};
	
	\node at (axis cs:0,16,8) {$\scalemath{1.75}{11}$};
	\node at (axis cs:2,14,8) {$\scalemath{1.75}{11}$};
	
	\node at (axis cs:0,14,10) {$\scalemath{1.75}{11}$};
	
	\addplot3[color=red,ultra thick] coordinates {(0,24,0)(2,22,0)(2,20,2)(4,18,2)(4,16,4)(6,14,4)};

  \end{ternaryaxis}
\end{tikzpicture}}
\subcaption{bottom faces\label{bottom break paths}}
\endminipage
\caption{The top and bottom of a $4$-hive with break paths indicated. (The top and bottom of the tetrahedron have been laid flat.) \label{fig:4hive example}}
\end{figure}

The left bottom face is labelled by $\omega_r, \mu,$ and $\rho$ along the NW, SW, and E edges respectively. As with the top faces, the NW external edge is labelled by $\omega_r$, and the successive strips are labelled by the same fundamental weight or one smaller giving a break path. The final strip is the southern lattice point. The decreases in $r$ together with $\mu$, determine $\rho$. More specifically, $\rho_k=\mu_k$ if there is no decrease between the $k$ and $k+1$ strip and $\rho_k=\mu_k-1$ if there is a decrease. 

For fixed $\mu$ the $i$ and $i+1$ strips of the left bottom face determine $\rho_i$. We will use the octahedron recurrence to excavate individual lattice points to reveal the decrease steps of the break path, and hence determine $\rho$. The lattice points of the second strip of the bottom left face lie directly beneath the lattice points of the second strip of face A. To reveal the second strip of the bottom face, and hence determine $\rho_1$, it suffices to ``shave off'' face A by a depth of $1$ by applications of the octahedron recurrence. This leaves a smaller tetrahedron, whose main horizontal edge visible from the top is one row down on face B from the original main horizontal edge. 

Figure $\ref{top shaving}$ demonstrates this first shave step for our previous example with the new tetrahedron indicated. At this point the first two strips of the bottom left face are known, so in particular $\rho_1=3$ can be seen either by directly reading off the difference $6-3$, or by noting that there is a decrease from $\omega_3$ to $\omega_2$ between the the first and second strips and that $\mu_1=4$.

\begin{figure}
\centering
\minipage{0.5\textwidth}
\centering
\scalemath{.4}{
\begin{tikzpicture}
  \begin{ternaryaxis}[
  separate axis lines,
    z axis line style= { draw opacity=.1 },
  ylabel=$\scalemath{2}{\omega_3}$,xlabel=$\scalemath{2}{\lambda}$,
    clip=false,xticklabels={},yticklabels={},zticklabels={},xmin=0,
    ymin=0,
    zmin=0,
    xmax=10,
    ymax=10,
    zmax=10,
    xtick={2,4,6,8},
    ytick={2,4,6,8},
    ztick={2,4,6,8},
    major tick length=0,
]    
  	\node at (axis cs:0,24,0) {$\scalemath{1.75}{0}$};
	\node at (axis cs:2,22,0) {$\scalemath{1.75}{1}$};
	\node at (axis cs:4,20,0) {$\scalemath{1.75}{2}$};
	\node at (axis cs:6,18,0) {$\scalemath{1.75}{3}$};
	\node at (axis cs:8,16,0) {$\scalemath{1.75}{3}$};
	\node at (axis cs:10,14,0) {$\scalemath{1.75}{3}$};
	
	\node[color=gray] at (axis cs:0,22,2) {$\scalemath{1.75}{5}$};
	\node at (axis cs:2,20,2) {$\scalemath{1.75}{5}$};
	\node at (axis cs:4,18,2) {$\scalemath{1.75}{6}$};
	\node at (axis cs:6,16,2) {$\scalemath{1.75}{6}$};
	\node at (axis cs:8,14,2) {$\scalemath{1.75}{6}$};
	
	\node[color=gray] at (axis cs:0,20,4) {$\scalemath{1.75}{8}$};
	\node at (axis cs:2,18,4) {$\scalemath{1.75}{8}$};
	\node at (axis cs:4,16,4) {$\scalemath{1.75}{9}$};
	\node at (axis cs:6,14,4) {$\scalemath{1.75}{9}$};
	
	\node[color=gray] at (axis cs:0,18,6) {$\scalemath{1.75}{10}$};
	\node at (axis cs:2,16,6) {$\scalemath{1.75}{10}$};
	\node at (axis cs:4,14,6) {$\scalemath{1.75}{10}$};
	
	\node[color=gray] at (axis cs:0,16,8) {$\scalemath{1.75}{11}$};
	\node at (axis cs:2,14,8) {$\scalemath{1.75}{11}$};
	
	\node at (axis cs:0,14,10) {$\scalemath{1.75}{11}$};
	
	\addplot3[color=blue,ultra thick] coordinates {(6,18,0)(4,18,2)(4,16,4)(2,16,6)(0,16,8)(0,14,10)};
	\addplot3[color=black,thick] coordinates {(2,8,0)(2,0,8)};

  \end{ternaryaxis}
\end{tikzpicture}}

\vspace{-3.15mm}

\scalemath{.4}{
\begin{tikzpicture}
  \begin{ternaryaxis}[
  separate axis lines,
    z axis line style= { draw opacity=.1 },
  ylabel=$\scalemath{2}{\mu}$,xlabel=$\scalemath{2}{\omega_2}$,xlabel style={xshift=0.5cm},
    clip=false,xticklabels={},yticklabels={},zticklabels={},xmin=0,
    ymin=0,
    zmin=0,
    xmax=10,
    ymax=10,
    zmax=10,
    yscale=-1,
    xtick={2,4,6,8},
    ytick={2,4,6,8},
    ztick={2,4,6,8},
    major tick length=0,
]    
  	\node at (axis cs:0,24,0) {$\scalemath{1.75}{0}$};
	\node at (axis cs:2,22,0) {$\scalemath{1.75}{4}$};
	\node at (axis cs:4,20,0) {$\scalemath{1.75}{6}$};
	\node at (axis cs:6,18,0) {$\scalemath{1.75}{8}$};
	\node at (axis cs:8,16,0) {$\scalemath{1.75}{9}$};
	\node at (axis cs:10,14,0) {$\scalemath{1.75}{9}$};
	
	\node at (axis cs:0,22,2) {$\scalemath{1.75}{\mathbf{5}}$};
	\node at (axis cs:2,20,2) {$\scalemath{1.75}{7}$};
	\node at (axis cs:4,18,2) {$\scalemath{1.75}{9}$};
	\node at (axis cs:6,16,2) {$\scalemath{1.75}{10}$};
	\node at (axis cs:8,14,2) {$\scalemath{1.75}{10}$};
	
	\node at (axis cs:0,20,4) {$\scalemath{1.75}{\mathbf{8}}$};
	\node at (axis cs:2,18,4) {$\scalemath{1.75}{9}$};
	\node at (axis cs:4,16,4) {$\scalemath{1.75}{11}$};
	\node at (axis cs:6,14,4) {$\scalemath{1.75}{11}$};
	
	\node at (axis cs:0,18,6) {$\scalemath{1.75}{\mathbf{10}}$};
	\node at (axis cs:2,16,6) {$\scalemath{1.75}{11}$};
	\node at (axis cs:4,14,6) {$\scalemath{1.75}{11}$};
	
	\node at (axis cs:0,16,8) {$\scalemath{1.75}{\mathbf{11}}$};
	\node at (axis cs:2,14,8) {$\scalemath{1.75}{11}$};
	
	\node at (axis cs:0,14,10) {$\scalemath{1.75}{11}$};
	
	\addplot3[color=red,ultra thick] coordinates {(0,10,0)(2,8,0)(2,6,2)(4,4,2)(4,2,4)(6,0,4)};
        \addplot3[color=black,thick] coordinates {(2,8,0)(2,0,8)};

  \end{ternaryaxis}
\end{tikzpicture}}
\subcaption{Main horizontal excavated, new values in bold\label{horizontal excavation}}
\endminipage
\minipage{0.5\textwidth}
\centering
\scalemath{.4}{
\begin{tikzpicture}
  \begin{ternaryaxis}[
    separate axis lines,
    z axis line style= { draw opacity=.1 },
    ylabel=$\scalemath{2}{\omega_3}$,xlabel=$\scalemath{2}{\lambda}$,
    clip=false,xticklabels={},yticklabels={},zticklabels={},xmin=0,
    ymin=0,
    zmin=0,
    xmax=10,
    ymax=10,
    zmax=10,
    xtick={2,4,6,8},
    ytick={2,4,6,8},
    ztick={2,4,6,8},
    major tick length=0,
]    

	\fill[gray,opacity=.5] (axis cs:0,24,0)--(axis cs:10,14,0)--(axis cs:8,14,2)--(axis cs:0,22,2)--cycle;
	\fill[gray,opacity=.5] (axis cs:0,14,10)--(axis cs:8,14,2)--(axis cs:6,16,2)--(axis cs:0,16,8)--cycle;

  	\node[color=gray] at (axis cs:0,24,0) {$\scalemath{1.75}{0}$};
	\node[color=gray] at (axis cs:2,22,0) {$\scalemath{1.75}{1}$};
	\node[color=gray] at (axis cs:4,20,0) {$\scalemath{1.75}{2}$};
	\node[color=gray] at (axis cs:6,18,0) {$\scalemath{1.75}{3}$};
	\node[color=gray] at (axis cs:8,16,0) {$\scalemath{1.75}{3}$};
	\node[color=gray] at (axis cs:10,14,0) {$\scalemath{1.75}{3}$};
	
	\node at (axis cs:0,22,2) {$\scalemath{1.75}{5}$};
	\node at (axis cs:2,20,2) {$\scalemath{1.75}{6}$};
	\node at (axis cs:4,18,2) {$\scalemath{1.75}{6}$};
	\node at (axis cs:6,16,2) {$\scalemath{1.75}{6}$};
	\node[color=gray] at (axis cs:8,14,2) {$\scalemath{1.75}{6}$};
	
	\node at (axis cs:0,20,4) {$\scalemath{1.75}{8}$};
	\node at (axis cs:2,18,4) {$\scalemath{1.75}{9}$};
	\node at (axis cs:4,16,4) {$\scalemath{1.75}{9}$};
	\node[color=gray] at (axis cs:6,14,4) {$\scalemath{1.75}{9}$};
	
	\node at (axis cs:0,18,6) {$\scalemath{1.75}{10}$};
	\node at (axis cs:2,16,6) {$\scalemath{1.75}{10}$};
	\node[color=gray] at (axis cs:4,14,6) {$\scalemath{1.75}{10}$};
	
	\node at (axis cs:0,16,8) {$\scalemath{1.75}{11}$};
	\node[color=gray] at (axis cs:2,14,8) {$\scalemath{1.75}{11}$};
	
	\node[color=gray] at (axis cs:0,14,10) {$\scalemath{1.75}{11}$};
	
	\addplot3[color=blue,ultra thick] coordinates {(2,6,2)(2,4,4)(0,4,6)};
	
	\addplot3[color=black, thick] coordinates {(6,2,2)(0,8,2)};
	\addplot3[color=black, thick] coordinates {(6,2,2)(0,2,8)};
	
  \end{ternaryaxis}
\end{tikzpicture}}

\vspace{-3.15mm}

\scalemath{.4}{
\begin{tikzpicture}
  \begin{ternaryaxis}[
    separate axis lines,
    z axis line style= { draw opacity=.1 },
    ylabel=$\scalemath{2}{\mu}$,xlabel=$\scalemath{2}{\omega_2}$,xlabel style={xshift=0.5cm},
    clip=false,xticklabels={},yticklabels={},zticklabels={},xmin=0,
    ymin=0,
    zmin=0,
    xmax=10,
    ymax=10,
    zmax=10,
    yscale=-1,
    xtick={2,4,6,8},
    ytick={2,4,6,8},
    ztick={2,4,6,8},
    major tick length=0,
]    
  	\node[color=gray] at (axis cs:0,24,0) {$\scalemath{1.75}{0}$};
	\node at (axis cs:2,22,0) {$\scalemath{1.75}{4}$};
	\node at (axis cs:4,20,0) {$\scalemath{1.75}{6}$};
	\node at (axis cs:6,18,0) {$\scalemath{1.75}{8}$};
	\node at (axis cs:8,16,0) {$\scalemath{1.75}{9}$};
	\node at (axis cs:10,14,0) {$\scalemath{1.75}{9}$};
	
	\node at (axis cs:0,22,2) {$\scalemath{1.75}{5}$};
	\node at (axis cs:2,20,2) {$\scalemath{1.75}{7}$};
	\node at (axis cs:4,18,2) {$\scalemath{1.75}{9}$};
	\node at (axis cs:6,16,2) {$\scalemath{1.75}{10}$};
	\node at (axis cs:8,14,2) {$\scalemath{1.75}{10}$};
	
	\node at (axis cs:0,20,4) {$\scalemath{1.75}{8}$};
	\node at (axis cs:2,18,4) {$\scalemath{1.75}{9}$};
	\node at (axis cs:4,16,4) {$\scalemath{1.75}{11}$};
	\node at (axis cs:6,14,4) {$\scalemath{1.75}{11}$};
	
	\node at (axis cs:0,18,6) {$\scalemath{1.75}{10}$};
	\node at (axis cs:2,16,6) {$\scalemath{1.75}{11}$};
	\node at (axis cs:4,14,6) {$\scalemath{1.75}{11}$};
	
	\node at (axis cs:0,16,8) {$\scalemath{1.75}{11}$};
	\node at (axis cs:2,14,8) {$\scalemath{1.75}{11}$};
	
	\node[color=gray] at (axis cs:0,14,10) {$\scalemath{1.75}{11}$};
	
	\addplot3[color=red,ultra thick] coordinates {(2,8,0)(2,6,2)(4,4,2)(4,2,4)(6,0,4)};
	\addplot3[color=blue,ultra thick] coordinates {(0,4,6)(2,2,6)(2,0,8)};

	\addplot3[color=black] coordinates {(2,8,0)(2,0,8)};
	\addplot3[color=black, thick] coordinates {(0,8,2)(2,8,0)};
	\addplot3[color=black, thick] coordinates {(0,2,8)(2,0,8)};
	
	\fill[gray, opacity=.5] (axis cs:0,22,2)--(axis cs:0,24,0)--(axis cs:2,22,0)--cycle;
        \fill[gray, opacity=.5] (axis cs:0,14,10)--(axis cs:2,14,8)--(axis cs:0,16,8)--cycle;
	
  \end{ternaryaxis}
\end{tikzpicture}}
\subcaption{One shave step, old values in gray \label{top shaving}}
\endminipage
\caption{Excavating a $4$-hive}
\end{figure}

The newly exposed, smaller tetrahedron has a new top face B, whose values were the already exposed values of the original top face B, and a new top face A, whose values were previously hidden. To get $\rho_2$, or equivalently the third strip of the bottom left face, the top face A of this new tetrahedron is shaved off and so forth. We call such a step in our process a \emph{shave step} to differentiate from a single application of the octahedron recurrence at a specific lattice point. We already observed that every strip of the original face A is an $\omega_t$ for some $t$. The same reasoning applies to the top face A of the new tetrahedron, and induction gives the following lemma. 

\begin{Lem}
Every strip of a newly exposed top face A has differences $\omega_t$ for some $t$.
\end{Lem}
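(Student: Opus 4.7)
The plan is a direct proof using the KTW strip lemma (the first cited lemma of this section) twice, combined with a geometric identification of which lattice points belong to which face. First I would set up coordinates $(i_1,i_2,i_3,i_4)$ on the $4$-hive with $\sum_a i_a = m$, chosen so that the original top face A lies in the hyperplane $\{i_4=0\}$ and its NW edge (with differences $\omega_j$) is the segment $\{i_3=i_4=0\}$. In these coordinates, after $k$ shave steps the newly exposed top face A is the slice $\{i_4=k\}$, itself a $3$-hive of size $m-k$, and its edge in the position analogous to the original NW edge is the segment $\{i_3=0,\,i_4=k\}$. The crucial geometric observation is that these lattice points lie inside the bottom-left face $\{i_3=0\}$ of the original $4$-hive.

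Next, I would use that the bottom-left face is itself a $3$-hive containing the edge $\{i_3=i_4=0\}$, shared with top A and carrying the minuscule differences $\omega_j$. Viewing this edge as the NW edge of the bottom-left $3$-hive and iterating the KTW strip lemma within that $3$-hive, every strip parallel to it has differences $\omega_i$ for some $i$. In particular the $k$-th such strip, $\{i_3=0,\,i_4=k\}$, which is exactly the NW edge of the newly exposed top face A, has differences $\omega_{i_0}$ for some $i_0$. Thus the newly exposed top face A is a $3$-hive whose NW edge is minuscule; a second iterated application of the KTW strip lemma, now within this $3$-hive, then yields that every one of its strips parallel to its NW edge has differences $\omega_i$ for some $i$, as desired.

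The main obstacle is purely geometric bookkeeping: verifying that the NW edge of the newly exposed top face A can indeed be identified with the $k$-th parallel strip of the bottom-left face of the original $4$-hive (rather than being, say, distorted by the excavations). Once this identification is established, the two applications of the KTW lemma close the argument, which matches the excerpt's indication that the lemma follows from ``the same reasoning'' applied inductively to the newly revealed tetrahedron.
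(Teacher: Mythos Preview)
Your proposal is correct and follows essentially the same approach as the paper: identify the NW edge of the newly exposed top face A as a strip of the bottom-left face of the $4$-hive, apply the KTW strip lemma there to see that edge is some $\omega_{i_0}$, and then apply the KTW lemma again inside the new top face A. The paper's proof is just a two-sentence version of this, without the explicit coordinate bookkeeping you provide.
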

\begin{proof}
The NW external edge of each new top face A is a strip of the bottom left face, which we know by Lemma \ref{lem:strips} has differences $\omega_t$ for some $t$. Apply the lemma repeatedly to conclude the same for every strip.
\end{proof}

To get each $\rho_k$ we must understand which $\omega_t$ appear after a single shave step, or equivalently, how the break paths change after a shave step. Since the new top face B is obtained simply by ignoring the previous main horizontal, the break path contracts along the slanted step from the main horizontal to the horizontal one row down. If there is no such slanted step, then the break path must itself be the main horizontal, in which case the new break path is the new main horizontal. 

Analyzing the change in the break path of top face A requires looking at a number of cases. The goal is the following technical lemma. We say that the break path of face A has an \emph{out elbow} at strip $k$ if there's a slanted step into strip $k$ followed by a horizontal step. Similarly, we say that face B has an \emph{out elbow} at strip $k$ if there's a horizontal step into strip $k$ followed by a slanted step.

\begin{Lem}{\label{lem:shave step}}
Let $\omega_{t_k}$ denote the weight along the $k$th strip of face A where the strips are numbered $1$ to $n+1$ from left to right. For $k$ between $2$ and $n+1$ let $\omega_{t_k'}$ be the weight along the $k$th strip of the newly exposed face A after a single shave step. Then for any $k$ either $t_k'=t_k$ or $t_k'=t_k+1$. 

There is at most a single interval $[a,b]$, possibly empty, of indices $k$ for which $t_k'=t_k+1$. In this case, it is necessary that the break path of face A has an out elbow at strip $b$ and that the break path of face B has an out elbow on the main horizontal at strip $b$. Furthermore, $\mu_k=\mu_{k-1}$ for all $k\in [a,b]$. In particular, if the first step of the face B break path is a slanted step, or if $\mu_k>\mu_{k+1}$ for all $k$, then $t_k'=t_k$ for all $k$.
\end{Lem}

\begin{proof}
Recall the octahedron recurrence (\ref{oct rec}), pictured in Figure \ref{fig:octahedron}. Excavating a single position depends on the value of the position itself and the four values that are directly NE, SE, SW, NW of the position in question. However, in analyzing the cases arising, it will be easier to keep track of two extra lattice points. The cases will depend on values of the following relative positions where we are excavating the middle position of the middle row. The values of these relative positions will be called the \emph{local data}.
\begin{align*}
\begin{array}{ccccccc}
&&\square&&\square&&\\
&\square&&\square&&\square&\\
&&\square&&\square&&\\
\end{array}
\end{align*}
A single shave step consists of inductive steps of excavating rows (first the main horizontal, then the one above, and so forth). The base case of this induction is the excavation of the main horizontal of the original $4$-hive and is pictured in Figure \ref{horizontal excavation}. 
\subsection{Excavating Below the Break Path}

\begin{figure}[t!]
\centering
\minipage{0.32\textwidth}
\centering
\scalemath{.4}{
\begin{tikzpicture}
  \begin{ternaryaxis}[
    xmin=0,
    ymin=0,
    zmin=0,
    xmax=15,
    ymax=15,
    zmax=15,
    xtick={0, ..., 15},
    ytick={0, ..., 15},
    ztick={0, ..., 15},
    xticklabels={},
    yticklabels={},
    zticklabels={},
    major tick length=0,
        minor tick length=0,
            every axis grid/.style={gray},
    minor tick num=1,
    ]
 
   \addplot3[color=blue,ultra thick] coordinates {(0,0,15)(0,3,12)(1,3,11)(2,3,10)(3,3,9)(4,3,8)(5,3,7)(5,4,6)(5,5,5)(5,6,4)(5,7,3)(6,7,2)(7,7,1)(8,7,0)};

  \end{ternaryaxis}
\end{tikzpicture}}

\vspace{-3.5mm}

\scalemath{.4}{
\begin{tikzpicture}
  \begin{ternaryaxis}[
  clip=false,
    xmin=0,
    ymin=0,
    zmin=0,
    xmax=15,
    ymax=15,
    zmax=15,
    yscale=-1,
    xtick={0, ..., 15},
    ytick={0, ..., 15},
    ztick={0, ..., 15},
    xticklabels={},
    yticklabels={},
    zticklabels={},
    major tick length=0,
        minor tick length=0,
            every axis grid/.style={gray},
    minor tick num=1,
    ]
            \node[label={180:{}},circle,fill,inner sep=2pt] at (axis cs:0,6,9) {};
            \node[label={180:{}},circle,fill,inner sep=2pt] at (axis cs:0,7,8) {};
            \node[label={180:{}},circle,fill,inner sep=2pt] at (axis cs:0,8,7) {};
            
            \node[label={180:{}},circle,fill,inner sep=2pt] at (axis cs:1,6,8) {};
            \node[label={180:{}},circle,fill,inner sep=2pt] at (axis cs:1,7,7) {};
            
            \node[label={180:{}},circle,fill,inner sep=2pt] at (axis cs:-1,7,9) {};
            \node[label={180:{}},circle,fill,inner sep=2pt] at (axis cs:-1,8,8) {};
 
   \addplot3[color=red,ultra thick] coordinates {(0,15,0)(0,11,4)(11,0,4)};
   
  \end{ternaryaxis}
\end{tikzpicture}}
    \subcaption{Case A(i)}
\endminipage\hfill
\minipage{0.32\textwidth}
\centering
\scalemath{.4}{
\begin{tikzpicture}
  \begin{ternaryaxis}[
    xmin=0,
    ymin=0,
    zmin=0,
    xmax=15,
    ymax=15,
    zmax=15,
    xtick={0, ..., 15},
    ytick={0, ..., 15},
    ztick={0, ..., 15},
    xticklabels={},
    yticklabels={},
    zticklabels={},
    major tick length=0,
        minor tick length=0,
            every axis grid/.style={gray},
    minor tick num=1,
    ]
 
   \addplot3[color=blue,ultra thick] coordinates {(0,0,15)(0,3,12)(1,3,11)(2,3,10)(3,3,9)(4,3,8)(5,3,7)(5,4,6)(5,5,5)(5,6,4)(5,7,3)(6,7,2)(7,7,1)(8,7,0)};

  \end{ternaryaxis}
\end{tikzpicture}}

\vspace{-3.5mm}

\scalemath{.4}{
\begin{tikzpicture}
  \begin{ternaryaxis}[
  clip=false,
    xmin=0,
    ymin=0,
    zmin=0,
    xmax=15,
    ymax=15,
    zmax=15,
    yscale=-1,
    xtick={0, ..., 15},
    ytick={0, ..., 15},
    ztick={0, ..., 15},
    xticklabels={},
    yticklabels={},
    zticklabels={},
    major tick length=0,
        minor tick length=0,
            every axis grid/.style={gray},
    minor tick num=1,
    ]
               \node[label={180:{}},circle,fill,inner sep=2pt] at (axis cs:0,6,9) {};
            \node[label={180:{}},circle,fill,inner sep=2pt] at (axis cs:0,7,8) {};
            \node[label={180:{}},circle,fill,inner sep=2pt] at (axis cs:0,8,7) {};
            
            \node[label={180:{}},circle,fill,inner sep=2pt] at (axis cs:1,6,8) {};
            \node[label={180:{}},circle,fill,inner sep=2pt] at (axis cs:1,7,7) {};
            
            \node[label={180:{}},circle,fill,inner sep=2pt] at (axis cs:-1,7,9) {};
            \node[label={180:{}},circle,fill,inner sep=2pt] at (axis cs:-1,8,8) {};
 
   \addplot3[color=red,ultra thick] coordinates {
 (0,15,0)(0,4,11)(4,0,11)
};
  \end{ternaryaxis}
\end{tikzpicture}}
\subcaption{Case A(ii)}
\endminipage\hfill
\minipage{0.32\textwidth}
\centering
\scalemath{.4}{
\begin{tikzpicture}
  \begin{ternaryaxis}[
    xmin=0,
    ymin=0,
    zmin=0,
    xmax=15,
    ymax=15,
    zmax=15,
    xtick={0, ..., 15},
    ytick={0, ..., 15},
    ztick={0, ..., 15},
    xticklabels={},
    yticklabels={},
    zticklabels={},
    major tick length=0,
        minor tick length=0,
            every axis grid/.style={gray},
    minor tick num=1,
    ]
 
   \addplot3[color=blue,ultra thick] coordinates {(0,0,15)(0,3,12)(1,3,11)(2,3,10)(3,3,9)(4,3,8)(5,3,7)(5,4,6)(5,5,5)(5,6,4)(5,7,3)(6,7,2)(7,7,1)(8,7,0)};

  \end{ternaryaxis}
\end{tikzpicture}}

\vspace{-3.5mm}

\scalemath{.4}{
\begin{tikzpicture}
  \begin{ternaryaxis}[
  clip=false,
    xmin=0,
    ymin=0,
    zmin=0,
    xmax=15,
    ymax=15,
    zmax=15,
    yscale=-1,
    xtick={0, ..., 15},
    ytick={0, ..., 15},
    ztick={0, ..., 15},
    xticklabels={},
    yticklabels={},
    zticklabels={},
    major tick length=0,
        minor tick length=0,
            every axis grid/.style={gray},
    minor tick num=1,
    ]
            \node[label={180:{}},circle,fill,inner sep=2pt] at (axis cs:0,6,9) {};
            \node[label={180:{}},circle,fill,inner sep=2pt] at (axis cs:0,7,8) {};
            \node[label={180:{}},circle,fill,inner sep=2pt] at (axis cs:0,8,7) {};
            
            \node[label={180:{}},circle,fill,inner sep=2pt] at (axis cs:1,6,8) {};
            \node[label={180:{}},circle,fill,inner sep=2pt] at (axis cs:1,7,7) {};
            
            \node[label={180:{}},circle,fill,inner sep=2pt] at (axis cs:-1,7,9) {};
            \node[label={180:{}},circle,fill,inner sep=2pt] at (axis cs:-1,8,8) {};
 
   \addplot3[color=red,ultra thick] coordinates {
   (0,15,0)(0,7,8)(7,0,8)

};
  \end{ternaryaxis}
\end{tikzpicture}}
\subcaption{Case A(iii)}
\endminipage
\caption{Case A (main horizontal)\label{case A1}}
\end{figure}



\begin{figure}[t!]
\centering
\minipage{0.32\textwidth}
\centering
\scalemath{.4}{
\begin{tikzpicture}
  \begin{ternaryaxis}[
    xmin=0,
    ymin=0,
    zmin=0,
    xmax=15,
    ymax=15,
    zmax=15,
    xtick={0, ..., 15},
    ytick={0, ..., 15},
    ztick={0, ..., 15},
    xticklabels={},
    yticklabels={},
    zticklabels={},
    major tick length=0,
        minor tick length=0,
            every axis grid/.style={gray},
    minor tick num=1,
    ]
 
   \addplot3[color=blue,ultra thick] coordinates {(0,0,15)(0,3,12)(1,3,11)(2,3,10)(3,3,9)(4,3,8)(5,3,7)(5,4,6)(5,5,5)(5,6,4)(5,7,3)(6,7,2)(7,7,1)(8,7,0)};

	   \node[label={180:{}},circle,fill,inner sep=2pt] at (axis cs:3,5,7) {};
            \node[label={180:{}},circle,fill,inner sep=2pt] at (axis cs:3,4,8) {};
            
            \node[label={180:{}},circle,fill,inner sep=2pt] at (axis cs:2,4,9) {};
            \node[label={180:{}},circle,fill,inner sep=2pt] at (axis cs:2,5,8) {};
            \node[label={180:{}},circle,fill,inner sep=2pt] at (axis cs:2,6,7) {};
            
             \node[label={180:{}},circle,fill,inner sep=2pt] at (axis cs:1,5,9) {};
            \node[label={180:{}},circle,fill,inner sep=2pt] at (axis cs:1,6,8) {};

  \end{ternaryaxis}
\end{tikzpicture}}

\vspace{-2.3mm}

\scalemath{.4}{
\begin{tikzpicture}
  \begin{ternaryaxis}[
  clip=false,
    xmin=0,
    ymin=0,
    zmin=0,
    xmax=15,
    ymax=15,
    zmax=15,
    yscale=-1,
    xtick={0, ..., 15},
    ytick={0, ..., 15},
    ztick={0, ..., 15},
    xticklabels={},
    yticklabels={},
    zticklabels={},
    major tick length=0,
        minor tick length=0,
            every axis grid/.style={gray},
    minor tick num=1,
    ]
 
   \addplot3[color=red,ultra thick] coordinates {(0,15,0)(0,11,4)(11,0,4)};
   
  \end{ternaryaxis}
\end{tikzpicture}}
    \subcaption{Case A(i)}
\endminipage\hfill
\minipage{0.32\textwidth}
\centering
\scalemath{.4}{
\begin{tikzpicture}
  \begin{ternaryaxis}[
    xmin=0,
    ymin=0,
    zmin=0,
    xmax=15,
    ymax=15,
    zmax=15,
    xtick={0, ..., 15},
    ytick={0, ..., 15},
    ztick={0, ..., 15},
    xticklabels={},
    yticklabels={},
    zticklabels={},
    major tick length=0,
        minor tick length=0,
            every axis grid/.style={gray},
    minor tick num=1,
    ]
 
   \addplot3[color=blue,ultra thick] coordinates {(0,0,15)(0,3,12)(1,3,11)(2,3,10)(3,3,9)(4,3,8)(5,3,7)(5,4,6)(5,5,5)(5,6,4)(5,7,3)(6,7,2)(7,7,1)(8,7,0)};

	\node[label={180:{}},circle,fill,inner sep=2pt] at (axis cs:3,5,7) {};
            \node[label={180:{}},circle,fill,inner sep=2pt] at (axis cs:3,4,8) {};
            
            \node[label={180:{}},circle,fill,inner sep=2pt] at (axis cs:2,4,9) {};
            \node[label={180:{}},circle,fill,inner sep=2pt] at (axis cs:2,5,8) {};
            \node[label={180:{}},circle,fill,inner sep=2pt] at (axis cs:2,6,7) {};
            
             \node[label={180:{}},circle,fill,inner sep=2pt] at (axis cs:1,5,9) {};
            \node[label={180:{}},circle,fill,inner sep=2pt] at (axis cs:1,6,8) {};

  \end{ternaryaxis}
\end{tikzpicture}}

\vspace{-2.3mm}

\scalemath{.4}{
\begin{tikzpicture}
  \begin{ternaryaxis}[
  clip=false,
    xmin=0,
    ymin=0,
    zmin=0,
    xmax=15,
    ymax=15,
    zmax=15,
    yscale=-1,
    xtick={0, ..., 15},
    ytick={0, ..., 15},
    ztick={0, ..., 15},
    xticklabels={},
    yticklabels={},
    zticklabels={},
    major tick length=0,
        minor tick length=0,
            every axis grid/.style={gray},
    minor tick num=1,
    ]
 
   \addplot3[color=red,ultra thick] coordinates {
 (0,15,0)(0,4,11)(4,0,11)
};
  \end{ternaryaxis}
\end{tikzpicture}}
\subcaption{Case A(ii)}
\endminipage\hfill
\minipage{0.32\textwidth}
\centering
\scalemath{.4}{
\begin{tikzpicture}
  \begin{ternaryaxis}[
    xmin=0,
    ymin=0,
    zmin=0,
    xmax=15,
    ymax=15,
    zmax=15,
    xtick={0, ..., 15},
    ytick={0, ..., 15},
    ztick={0, ..., 15},
    xticklabels={},
    yticklabels={},
    zticklabels={},
    major tick length=0,
        minor tick length=0,
            every axis grid/.style={gray},
    minor tick num=1,
    ]
 
   \addplot3[color=blue,ultra thick] coordinates {(0,0,15)(0,3,12)(1,3,11)(2,3,10)(3,3,9)(4,3,8)(5,3,7)(5,4,6)(5,5,5)(5,6,4)(5,7,3)(6,7,2)(7,7,1)(8,7,0)};
   
            \node[label={180:{}},circle,fill,inner sep=2pt] at (axis cs:3,5,7) {};
            \node[label={180:{}},circle,fill,inner sep=2pt] at (axis cs:3,4,8) {};
            
            \node[label={180:{}},circle,fill,inner sep=2pt] at (axis cs:2,4,9) {};
            \node[label={180:{}},circle,fill,inner sep=2pt] at (axis cs:2,5,8) {};
            \node[label={180:{}},circle,fill,inner sep=2pt] at (axis cs:2,6,7) {};
            
             \node[label={180:{}},circle,fill,inner sep=2pt] at (axis cs:1,5,9) {};
            \node[label={180:{}},circle,fill,inner sep=2pt] at (axis cs:1,6,8) {};

  \end{ternaryaxis}
\end{tikzpicture}}

\vspace{-2.3mm}

\scalemath{.4}{
\begin{tikzpicture}
  \begin{ternaryaxis}[
  clip=false,
    xmin=0,
    ymin=0,
    zmin=0,
    xmax=15,
    ymax=15,
    zmax=15,
    yscale=-1,
    xtick={0, ..., 15},
    ytick={0, ..., 15},
    ztick={0, ..., 15},
    xticklabels={},
    yticklabels={},
    zticklabels={},
    major tick length=0,
        minor tick length=0,
            every axis grid/.style={gray},
    minor tick num=1,
    ]

   \addplot3[color=red,ultra thick] coordinates {
   (0,15,0)(0,7,8)(7,0,8)

};
  \end{ternaryaxis}
\end{tikzpicture}}
\subcaption{Case A(iii)}
\endminipage
\caption{Case A (inductive step)\label{case A2}}
\end{figure}


We will first consider excavating points strictly below the break path of face A. Even more specifically, begin with a lattice point on the main horizontal. There are three subcases depending on the break path of face B, as shown in Figure \ref{case A1}. If the point is on the $k+1$ strip, then the local data is the following. 
\begin{align*}
\begin{array}{ccccccc}
&&\sum_1^{k-1}\nu_i+1&&\sum_1^{k}\nu_i+1&&\\
&\sum_1^{k-1}\nu_i&&\sum_1^{k}\nu_i&&\sum_1^{k+1}\nu_i&\\
&&\sum_1^{k}\nu_i-x&& \sum_1^{k+1}\nu_i-y&&
\end{array}
\end{align*} 
Since we are excavating a point on the main horizontal, the two bottom lattice points are on face B, and the $x$, $y$ variables are $0$ or $1$ depending on the break path of face B. Only the values of $x$ and $y$ will affect excavation at this point, so the possible cases are $x=0=y$, or $x=1=y$, or $x=1$ and $y=0$, depending, respectively, on whether the break path of face B does not touch the excavation point, touches all three middle points, or has an out elbow at the excavation point, as shown in Figure \ref{case A1}.

Let $z$ be the value of the lattice point that the current excavation will reveal. The value of $z$ must be either $\sum_1^{k}\nu_i-x$ or $\sum_1^{k}\nu_i-x+1$ because we know that every strip of the new face will be an $\omega_i$ for some $i$, and the value of the first lattice point of this strip is $\sum_1^{k}\nu_i-x$. In each case we show that $z$ is actually $\sum_1^{k}\nu_i -x+1$, hence there is a difference of $1$ between the first and second positions of the revealed strip. Cases (i) and (ii) below are implied by $\nu_{k+1}\leq \nu_k$ because $\nu$ is dominant.
\begin{enumerate}[itemindent=.6cm,labelwidth=\itemindent,align=left,label=(\roman*)]
\item
($x=0,y=0$) \hspace{5mm}$z=\max\left(\sum_1^{k}\nu_i+1,\sum_1^{k-1}\nu_i+1+\nu_{k+1}\right)=\sum_1^{k}\nu_i+1$
\item
($x=1,y=1$) \hspace{5mm}$z=\max\left(\sum_1^{k}\nu_i,\sum_1^{k-1}\nu_i+\nu_{k+1}\right)=\sum_1^{k}\nu_i$
\item
($x=1,y=0$) \hspace{5mm}$z=\max\left(\sum_1^{k}\nu_i,\sum_1^{k-1}\nu_i+\nu_{k+1}+1\right)=\sum_1^{k}\nu_i$
\end{enumerate}
In case (iii), the second input to the maximum function can be $z=\sum_1^{k}\nu_i+1$ only if $\nu_{k+1}=\nu_{k}$, but this is impossible because the newly excavated line has to be an $\omega_t$ for some $t$, yet this value together with $x=1$ gives a difference of $2$. At this point, the first two lattice points of the new strip are revealed, the first of which is labelled by $\sum_1^{k}\nu_i-x$. In each of the three cases, the difference between the values at these two lattice points is $1$. 

Excavating the main horizontal leaves a trench, where the newly visible lattice points now play the role of the bottom row of the local data. We call these the \emph{trench values}. In the case just analyzed, the trench values were the values on face B. In all three cases above, the \emph{trench difference} $x$ between the trench and the excavation point is preserved, an observation we will need for the inductive step. 

Now consider excavating a point that is on the $k+1$ strip and on the $c$th horizontal line above the main horizontal, but still strictly below the break path of face A. Assume that the two relevant lattice points below this location have already been excavated. The local data looks like the following.
\begin{align*}
\begin{array}{ccccccc}
&&\sum_1^{k-1}\nu_i+c+1&&\sum_1^{k}\nu_i+c+1&&\\
&\sum_1^{k-1}\nu_i+c&&\sum_1^{k}\nu_i+c&&\sum_1^{k+1}\nu_i+c&\\
&&\sum_1^{k}\nu_i+c-x&& \sum_1^{k+1}\nu_i+c-y&&
\end{array}
\end{align*} 
By induction the exposed portion of the new $k+1$ strip increases by one at every step thus far, and the trench differences of $x$ and $y$ in the local data are the same as the trench differences given by the break path of face B. In particular, the same three subcases may arise, as indicated in Figure \ref{case A2}. Since the constant $c$ does not affect the octahedron recurrence, the calculation of $z$ is as before, so we may conclude in this case that both the strip difference and the trench difference propagate. In particular, the trench differences given by the break path of face B propagate all the way up to excavating the points directly on the A break path.

\subsection{Excavating on the Break Path}
Excavating a point on the break path of A is more interesting. Consider case B of Figure \ref{on path}. Since the two positions of the bottom row of the local data were excavated by instances of case A, the trench differences $x,y$ take on the same values originally given by the break path of face B. In particular, the same three possible subcases can arise. The local data is as follows where the constant $c$ is omitted. Note that there are now equalities beyond this point along the $k+1$ strip.
\begin{align*}
\begin{array}{ccccccc}
&&\sum_1^{k-1}\nu_i&&\sum_1^{k}\nu_i&&\\
&\sum_1^{k-1}\nu_i&&\sum_1^{k}\nu_i&&\sum_1^{k+1}\nu_i&\\
&&\sum_1^{k}\nu_i-x&& \sum_1^{k+1}\nu_i-y&&
\end{array}
\end{align*} 
Note that $\nu_{k+1}\leq \nu_k$ again implies the calculations in (i) and (ii) below.
\begin{enumerate}[itemindent=.6cm,labelwidth=\itemindent,align=left,label=(\roman*)]
\item
($x=0,y=0$) \hspace{5mm} $z=\max\left(\sum_1^{k}\nu_i,\sum_1^{k-1}\nu_i+\nu_{k+1}\right)=\sum_1^{k}\nu_i$
\item
($x=1,y=1$) \hspace{5mm} $z=\max\left(\sum_1^{k}\nu_i-1,\sum_1^{k-1}\nu_i+\nu_{k+1}-1\right)=\sum_1^{k}\nu_i-1$
\item
($x=1,y=0$) \hspace{5mm} $z=\max\left(\sum_1^{k}\nu_i-1,\sum_1^{k-1}\nu_i+\nu_{k+1}\right)=\sum_1^{k}\nu_i-1$
\end{enumerate}
In case (iii), the second input to the maximum function is $z=\sum_1^{k} \nu_i$ if $\nu_{k+1}=\nu_{k}$, but we show this is impossible. The values $x=1$ and $y=0$ are given by the break path of B, hence imply that $\nu_k=\mu_k+1$ and $\nu_{k+1}=\mu_{k+1}$. Together with $\nu_{k+1}=\nu_{k}$ this would imply, $\mu_{k+1}=\nu_{k+1}=\nu_{k}=\mu_{k}+1$, contradicting $\mu$ being dominant.

In each case (i), (ii), and (iii), the strip equality and the trench difference propagate to the strip below. At this point of excavating a strip that falls under case B, the newly revealed strip has $c$ increasing steps (which we knew from case A), followed by a constant step.

Now consider case C, the interesting case where the excavation point is at an out elbow in the break path of A. In this case, the break path of A implies that $\nu_k=\lambda_k+1$ and $\nu_{k+1}=\lambda_{k+1}$. Note that $\nu_{k+1}\leq\nu_{k}-1$ because $\nu_{k+1}=\nu_{k}$ would imply $\lambda_{k}<\lambda_{k+1}$, contradicting that $\lambda$ is dominant. The local data is as follows. As before, the trench differences $x$ and $y$ are determined by instances of case A, hence were propagated from the break path of B. The common constant $c$ is again omitted. 
\begin{align*}
\begin{array}{ccccccc}
&&\sum_1^{k-1}\nu_i+1&&\sum_1^{k}\nu_i&&\\
&\sum_1^{k-1}\nu_i&&\sum_1^{k}\nu_i&&\sum_1^{k+1}\nu_i&\\
&&\sum_1^{k}\nu_i-x&& \sum_1^{k+1}\nu_i-y&&
\end{array}
\end{align*} 
The calculations in (i) and (ii) below follow from $\nu_{k+1}\leq\nu_{k}-1$.
\begin{enumerate}[itemindent=.6cm,labelwidth=\itemindent,align=left,label=(\roman*)]
\item
($x=0,y=0$) \hspace{5mm} $z=\max\left(\sum_1^{k}\nu_i,\sum_1^{k-1}\nu_i+\nu_{k+1}+1\right)=\sum_1^{k}\nu_i$
\item
($x=1,y=1$) \hspace{5mm} $z=\max\left(\sum_1^{k}\nu_i-1,\sum_1^{k-1}\nu_i+\nu_{k+1}\right)=\sum_1^{k}\nu_i-1$
\item
($x=1,y=0$) \hspace{5mm} $z=\max\left(\sum_1^{k}\nu_i-1,\sum_1^{k-1}\nu_i+\nu_{k+1}+1\right)$
\end{enumerate}
Unlike previous cases, there are two possibilities in case (iii). We have $z=\sum_1^{k}\nu_i-1$ if $\nu_{k+1}<\nu_{k}-1$ and $z=\sum_1^{k}\nu_i$ if $\nu_{k+1}=\nu_{k}-1$. The second case produces a difference of $1$ between the current two positions of the revealed strip, even though there is a difference of $0$ between the two positions in the strip above. Also in this special case, the trench difference $x=1$ turns into $x=0$. We will call this the C$(iii)$ special case.

Note that $x=1$ and $y=0$ corresponds to the break path of face $B$ also having an out elbow along the same strip, since the values of $x$ and $y$ propagated up from the break path of face B. Furthermore, if $x=1$, $y=0$, then $\nu_{k+1}=\nu_{k}-1$ if and only if $\mu_k=\mu_{k+1}$. This is because $x=1$ implies $\nu_{k}=\mu_{k}+1$ and $y=0$ implies $\nu_{k+1}=\mu_{k+1}$. Hence, excavation of a case C strip has thus far revealed $c$ increasing steps, followed by possibly another increasing step if and only if the B break path has an out elbow at the same strip and $\mu_{k}=\mu_{k+1}$.


\begin{figure}
\centering
\scalemath{.4}{
\begin{tikzpicture}
  \begin{ternaryaxis}[
  title={\Huge Case B},
  clip=false,
    xmin=0,
    ymin=0,
    zmin=0,
    xmax=15,
    ymax=15,
    zmax=15,
    xtick={0, ..., 15},
    ytick={0, ..., 15},
    ztick={0, ..., 15},
    xticklabels={},
    yticklabels={},
    zticklabels={},
    major tick length=0,
        minor tick length=0,
            every axis grid/.style={gray},
    minor tick num=1,
    ]
 
   \addplot3[color=blue,very thick] coordinates {
(0,0,15)
(0,3,12)
(1,3,11)
(2,3,10)
(3,3,9)
(4,3,8)
(5,3,7)
(5,4,6)
(5,5,5)
(5,6,4)
(5,7,3)
(6,7,2)
(7,7,1)
(8,7,0)

};

            \node[label={180:{}},circle,fill,inner sep=2pt] at (axis cs:6,4,5) {};
            \node[label={180:{}},circle,fill,inner sep=2pt] at (axis cs:6,5,4) {};
            
            \node[label={180:{}},circle,fill,inner sep=2pt] at (axis cs:4,5,6) {};
            \node[label={180:{}},circle,fill,inner sep=2pt] at (axis cs:4,6,5) {};
            
            \node[label={180:{}},circle,fill,inner sep=2pt,color=black] at (axis cs:5,6,4) {};
            \node[label={180:{}},circle,fill,inner sep=2pt,color=black] at (axis cs:5,5,5) {};
            \node[label={180:{}},circle,fill,inner sep=2pt,color=black] at (axis cs:5,4,6) {};

  \end{ternaryaxis}
\end{tikzpicture}
}
\scalemath{.4}{
\begin{tikzpicture}
  \begin{ternaryaxis}[
  title={\Huge Case C},
  clip=false,
    xmin=0,
    ymin=0,
    zmin=0,
    xmax=15,
    ymax=15,
    zmax=15,
    xtick={0, ..., 15},
    ytick={0, ..., 15},
    ztick={0, ..., 15},
    xticklabels={},
    yticklabels={},
    zticklabels={},
    major tick length=0,
        minor tick length=0,
            every axis grid/.style={gray},
    minor tick num=1,
    ]
 
   \addplot3[color=blue,very thick] coordinates {
   (0,0,15)
(0,3,12)
(1,3,11)
(2,3,10)
(3,3,9)
(4,3,8)
(5,3,7)
(5,4,6)
(5,5,5)
(5,6,4)
(5,7,3)
(6,7,2)
(7,7,1)
(8,7,0)
};

            \node[label={180:{}},circle,fill,inner sep=2pt] at (axis cs:6,6,3) {};
            \node[label={180:{}},circle,fill,inner sep=2pt] at (axis cs:6,7,2) {};
            
            \node[label={180:{}},circle,fill,inner sep=2pt] at (axis cs:4,7,4) {};
            \node[label={180:{}},circle,fill,inner sep=2pt] at (axis cs:4,8,3) {};
            
            \node[label={180:{}},circle,fill,inner sep=2pt,color=black] at (axis cs:5,8,2) {};
            \node[label={180:{}},circle,fill,inner sep=2pt,color=black] at (axis cs:5,7,3) {};
            \node[label={180:{}},circle,fill,inner sep=2pt,color=black] at (axis cs:5,6,4) {};

  \end{ternaryaxis}
\end{tikzpicture}
}
\scalemath{.4}{
\begin{tikzpicture}
  \begin{ternaryaxis}[
  title={\Huge Case D},
  clip=false,
    xmin=0,
    ymin=0,
    zmin=0,
    xmax=15,
    ymax=15,
    zmax=15,
    xtick={0, ..., 15},
    ytick={0, ..., 15},
    ztick={0, ..., 15},
    xticklabels={},
    yticklabels={},
    zticklabels={},
    major tick length=0,
        minor tick length=0,
            every axis grid/.style={gray},
    minor tick num=1,
    ]
 
   \addplot3[color=blue, very thick] coordinates {
   (0,0,15)
(0,3,12)
(1,3,11)
(2,3,10)
(3,3,9)
(4,3,8)
(5,3,7)
(5,4,6)
(5,5,5)
(5,6,4)
(5,7,3)
(6,7,2)
(7,7,1)
(8,7,0)
};

            \node[label={180:{}},circle,fill,inner sep=2pt] at (axis cs:4,2,9) {};
            \node[label={180:{}},circle,fill,inner sep=2pt] at (axis cs:4,3,8) {};
            
            \node[label={180:{}},circle,fill,inner sep=2pt] at (axis cs:2,3,10) {};
            \node[label={180:{}},circle,fill,inner sep=2pt] at (axis cs:2,4,9) {};
            
            \node[label={180:{}},circle,fill,inner sep=2pt,color=black] at (axis cs:3,4,8) {};
            \node[label={180:{}},circle,fill,inner sep=2pt,color=black] at (axis cs:3,3,9) {};
            \node[label={180:{}},circle,fill,inner sep=2pt,color=black] at (axis cs:3,2,10) {};

  \end{ternaryaxis}
\end{tikzpicture}
}
\scalemath{.4}{
\begin{tikzpicture}
  \begin{ternaryaxis}[
  title={\Huge Case E},
  clip=false,
    xmin=0,
    ymin=0,
    zmin=0,
    xmax=15,
    ymax=15,
    zmax=15,
    xtick={0, ..., 15},
    ytick={0, ..., 15},
    ztick={0, ..., 15},
    xticklabels={},
    yticklabels={},
    zticklabels={},
    major tick length=0,
        minor tick length=0,
            every axis grid/.style={gray},
    minor tick num=1,
    ]
 
   \addplot3[color=blue, very thick] coordinates {
   (0,0,15)
(0,3,12)
(1,3,11)
(2,3,10)
(3,3,9)
(4,3,8)
(5,3,7)
(5,4,6)
(5,5,5)
(5,6,4)
(5,7,3)
(6,7,2)
(7,7,1)
(8,7,0)
};

            \node[label={180:{}},circle,fill,inner sep=2pt] at (axis cs:6,2,7) {};
            \node[label={180:{}},circle,fill,inner sep=2pt] at (axis cs:6,3,6) {};
            
            \node[label={180:{}},circle,fill,inner sep=2pt] at (axis cs:4,3,8) {};
            \node[label={180:{}},circle,fill,inner sep=2pt] at (axis cs:4,4,7) {};
            
            \node[label={180:{}},circle,fill,inner sep=2pt,color=black] at (axis cs:5,4,6) {};
            \node[label={180:{}},circle,fill,inner sep=2pt,color=black] at (axis cs:5,3,7) {};
            \node[label={180:{}},circle,fill,inner sep=2pt,color=black] at (axis cs:5,2,8) {};

  \end{ternaryaxis}
\end{tikzpicture}
}
\caption{All of the ``on break path'' cases \label{on path}}
\end{figure}


Now consider case D, the other interesting case with local data as follows. 
\begin{align*}
\begin{array}{ccccccc}
&&\sum_1^{k-1}\nu_i+2&&\sum_1^{k}\nu_i+1&&\\
&\sum_1^{k-1}\nu_i+1&&\sum_1^{k}\nu_i+1&&\sum_1^{k+1}\nu_i&\\
&&\sum_1^{k}\nu_i+1-x&& \sum_1^{k+1}\nu_i-y&&
\end{array}
\end{align*} 
As before, the position containing $x$ was excavated by an instance of case A, so this trench value propagated from the original trench value given by break path B. The position containing $y$ however could have been excavated by an instance of case C, or by another instance of case D. If it was excavated by an instance of case C, then it is possible that the special case of C changed a trench difference of a $1$ to a $0$. Either way, we have the same three subcases for $x,y$ values. 
\begin{enumerate}[itemindent=.6cm,labelwidth=\itemindent,align=left,label=(\roman*)]
\item
($x=0,y=0$) \hspace{5mm} $z=\max\left(\sum_1^{k}\nu_i+1,\sum_1^{k-1}\nu_i+\nu_{k+1}+1\right)=\sum_1^{k}\nu_i+1$
\item
($x=1,y=1$) \hspace{5mm} $z=\max\left(\sum_1^{k}\nu_i,\sum_1^{k-1}\nu_i+\nu_{k+1}\right)=\sum_1^{k}\nu_i$
\item
($x=1,y=0$) \hspace{5mm} $z=\max\left(\sum_1^{k}\nu_i,\sum_1^{k-1}\nu_i+\nu_{k+1}+1\right)$
\end{enumerate}
In case (iii), $z=\sum_1^{k}\nu_i$ if $\nu_{k+1}<\nu_{k}$, and $z=\sum_1^{k}\nu_i+1$ if $\nu_{k+1}=\nu_{k}$. In the first case, the strip difference and trench difference propagate. The second case is more interesting. It corresponds to an increase in the current position of the revealed strip, even though there was no such increase in the original strip. Also in this second case the trench difference $x=1$ turns into $x=0$. 

Let's consider when this special case of $\nu_{k+1}=\nu_{k}$ can actually happen. One way to end up with $x=1$ and $y=0$ is if the break path of the top face B starts by giving $x=1$ and $y=0$ at the very first step. Then $\nu_{k+1}=\nu_{k}$ forces $\mu_{k}=\nu_{k}-1<\nu_{k}=\nu_{k+1}=\mu_{k+1}$ which is a contradiction. The only other way to end up with $x=1$ and $y=0$ is if the break path of top face B starts with $x=1$ and $y=1$ on the $k+1$ and $k+2$ strips, and the excavation of the position to the SE changes $y=1$ into $y=0$, as in the special case of C(iii). In this case, $\nu_{k}=\nu_{k+1}$ if and only if $\mu_{k}=\mu_{k+1}$. 

If this special case of D(iii) occurs then the strip difference of $0$ becomes a difference of $1$, and the trench difference of $x=1$ turns into a trench difference of $x=0$. By induction, an instance of case D can only have the above three possible values for $x$ and $y$. In particular, the special case of D(iii) can trigger another instance of the special case of D(iii) directly to the NW. Such a string of special case D(iii)'s can only occur if the special case C(iii) occurs at the out elbow of the same NW-SE strip.

Finally, for case E the position involving $y$ could have been excavated by an instance of case C or D, in which case we know that $y=0$ or $y=1$. We claim there are no interesting subcases, meaning that the strip difference and trench difference propagate. We leave the routine analysis to the reader.

This concludes the cases of excavating a point on the break path. At this point we have shown that strip differences and trench differences always propagate when below the break path, and also when on the break path, except for the special cases of C(iii) and D(iii). The special cases of C(iii) and D(iii) can occur at strip $k+1$ only if $\mu_{k}=\mu_{k+1}$. In both special cases, the strip difference of $0$ flips to a $1$, while the trench difference of $1$ flips to a $0$.

We claim that there are no new interesting cases when excavating a point above the break path, i.e. the strip difference and trench difference propagate. We leave this analysis to the reader.
\end{proof}

The only assumptions that we have made on the original $4$-hive is that the NW and SE edges are fundamental weights and that $\lambda,\mu,\nu$ are dominant weights. After one shave step we see that these assumptions still hold, so the same applies to the smaller $4$-hive. Note though that the break paths change accordingly.

\subsection{Completing the Proof}
Recall that $\nu_k=\lambda_k+1$ if and only if the $k$th strip of face A is labeled by $\omega_t$ and the $k+1$st strip by $\omega_{t-1}$ for some $t$. Likewise, $\rho_k=\mu_k-1$ if and only if the $k$th strip of the bottom left face is labeled by $\omega_t$ and the $(k+1)$st by $\omega_{t-1}$ for some $t$. This information is revealed after the $k$th shave step. 

Thus, if $\mu_{k}>\mu_{k+1}$ for all $k$, then by the previous proposition each shave step simply copies the $\omega_t$ from the top face to the next. This means that $1$ is added to $\lambda_k$ to get $\nu_k$ if and only if $1$ is subtracted from $\mu_k$ to get $\rho_k$. This is precisely the local condition (\ref{local condition}), $\rho=\mu-(\nu-\lambda)$, without any sorting. 

On the other hand, suppose that $\mu_{k}=\mu_{k+1}$. If $\nu_k=\lambda_k+1$ and $\nu_{k+1}=\lambda_{k+1}$, then setting $\rho_k=\mu_k-1$ and $\rho_{k+1}=\mu_{k+1}$ would result in $\rho_k=\mu_k-1<\mu_{k+1}=\rho_{k+1}$, i.e. not a dominant weight. Instead we will show that sorting occurs, meaning that that $\rho_i=\mu_i$ for all indices $i\geq k$, up until the smallest index $j$ for which $\rho_j=\mu_j-1$ gives a dominant weight. We will show that the special cases of C(iii) and D(iii) in the proof of the previous proposition facilitate the sorting. 

\begin{proof}[Proof of Proposition \ref{prop:hive excavation}]
We would like to show that subtracting $\nu-\lambda$ from $\mu$ and sorting gives $\rho$. For a dominant weight $\mu$ let $i_l<j_l$ be the indices denoting the stretches of equalities of the entries of $\mu$,
\begin{align*}
\mu_1&>\mu_2
>\cdots
>\mu_{i_1}=\mu_{i_1+1}=\cdots=\mu_{j_1}
>\mu_{j_1+1}
>\cdots
>\mu_{i_2}=\mu_{i_2+1}=\cdots=\mu_{j_2}
>\cdots
>\mu_n.
\end{align*}
Note it is possible that $j_l+1=i_{l+1}$. Recall that the SW-to-NE-oriented strips of the $4$-hive are indexed $1,\ldots, n+1$ left to right where the last strip is a single lattice point. The parts of the dominant weights are indexed $1,\ldots, n$ and are given by the successive differences of the labels of the $1,\ldots, n+1$ lattice points.

First consider an index $k\notin[i_l,j_l]$ for all $l$, which is equivalent to $\mu_{k-1}\not=\mu_{k}\not=\mu_{k+1}$. Since $\mu_{k-1}\not=\mu_{k}$, the special cases C(iii) and D(iii) cannot occur along strip $k$. If $\nu_k=\lambda_k$, then the face A break path between strips $k$ and $k+1$ is a horizontal step, and remains so upon excavation, so $\rho_k=\mu_k$. Likewise, if $\nu_k=\lambda_k+1$, then the break path has a slanted step between strips $k$ and $k+1$. Since $\mu_k\not=\mu_{k+1}$, the special cases cannot occur along strip $k+1$, so $\rho_k=\mu_k-1$. In both cases, this coincides with the local condition.

Now consider the indices in the interval $[i_l,j_l]$ for some $l$, and restrict $\nu$ and $\mu$ to this interval. Since $\nu$ is dominant and the entries of $\mu$ are equal for these indices, it follows that $\nu_k=\mu_k+1$ for $k$ ranging in some initial subinterval $[i_l,h_l]$ for $i_l-1\leq h_l\leq j_l$, and $\nu_k=\mu_k$ for $k$ in $[h_l+1,j_l]$. Note that $h_l=i_l-1$ corresponds to this initial interval being empty. The break path of face B is horizontal for the steps between strips $i_l$ to $h_l+1$ and slanted between strips $h_l+1$ to $j_l+1$. See Figure $\ref{break paths of interval}$ where the strips $i_l$ to $j_l+1$ are bold.

If $\nu_k=\lambda_k$ for every $k\in[i_l,j_l]$, then the break path of face A is horizontal across the strips $i_l$ to $j_l+1$. The special cases of the proposition cannot occur for any of these strips, so their weight labels remain the same after every shave step. This gives $\rho_k=\mu_k$ for all $k\in[i_l,j_l]$ as desired.

Suppose that there is at least one index $k$ in $[i_l,j_l]$ such that $\nu_k=\lambda_k+1$. We would like to understand the possibilities for the break path of face A. Let $z=\mu_{i_l}$. Then $(\mu_{i_l},\ldots,\mu_{j_l})=(z,\ldots,z)$ and $(\nu_{i_l},\ldots,\nu_{j_l})=(z+1,\ldots,z+1,z,\ldots,z)$ where the entries are $z+1$ for indices $i_l$ to $h_l$. Then $(\lambda_{i_l},\ldots,\lambda_{j_l})=(z+1,\ldots,z+1,z,\ldots,z,z-1,\ldots,z-1)$ where the entries are $z+1$ up to some index $a\leq h_l$ and the entries are $z-1$ starting at some index $b> h_l$. This gives the intervals $[i_l,a], [a+1,h_l], [h_l+1,b], [b+1,j_l]$. The weights $\mu, \nu, \lambda$ on the interval $[i_l,j_l]$ can be visualized as outlines of Young diagrams with infinitely many boxes to the left in each row.
\begin{align*}
\mu= \begin{tikzpicture}[baseline=8ex,scale=.35]
\draw[ultra thick] (0,0) -- (0,10);
\draw (0,10) rectangle (-1,9);
\draw (0,9) rectangle (-1,8);
\draw (0,8) rectangle (-1,7);
\draw (0,7) rectangle (-1,6);
\draw (0,6) rectangle (-1,5);
\draw (0,5) rectangle (-1,4);
\draw (0,4) rectangle (-1,3);
\draw (0,3) rectangle (-1,2);
\draw (0,2) rectangle (-1,1);
\draw (0,1) rectangle (-1,0);
\end{tikzpicture}
\hspace{10mm}
\nu= \begin{tikzpicture}[baseline=8ex,scale=.35]
\draw[ultra thick] (0,0) -- (0,10);
\draw (0,10) rectangle (1,9);
\draw (0,9) rectangle (1,8);
\draw (0,8) rectangle (1,7);
\draw (0,7) rectangle (1,6);
\draw (0,6) rectangle (1,5);
\draw (0,10) rectangle (-1,9);
\draw (0,9) rectangle (-1,8);
\draw (0,8) rectangle (-1,7);
\draw (0,7) rectangle (-1,6);
\draw (0,6) rectangle (-1,5);
\draw (0,5) rectangle (-1,4);
\draw (0,4) rectangle (-1,3);
\draw (0,3) rectangle (-1,2);
\draw (0,2) rectangle (-1,1);
\draw (0,1) rectangle (-1,0);
\end{tikzpicture}
\hspace{10mm}
\lambda= \begin{tikzpicture}[baseline=8ex,scale=.35]
\draw[ultra thick] (1,0) -- (1,10);
\draw (0,10) rectangle (1,9);
\draw (0,9) rectangle (1,8);
\draw (0,8) rectangle (1,7);
\draw (0,7) rectangle (1,6);
\draw (0,6) rectangle (1,5);
\draw (0,5) rectangle (1,4);
\draw (0,4) rectangle (1,3);
\draw (1,10) rectangle (2,9);
\end{tikzpicture}
\end{align*}

The corresponding portion of face A's break path consists of $a-i_l+1$ horizontal steps, then $h_l-a$ slanted steps, followed by $b-h_l$ horizontal steps, and finally $j_l+b$ slanted steps. For the example above, there are $1$, $4$, $2$, $3$ such steps respectively. Certain consecutive segments may be empty, but there is at least one slanted step since we are assuming $\nu_k=\lambda_k+1$ for some $k\in [i_l,j_l]$. More importantly, there is at most one position where a slanted step is followed by a horizontal step, i.e. there is at most one out elbow. Furthermore, if there is such an out elbow on face A, then it must occur at strip $h_l+1$, the same strip at which the break path of face B contains an out elbow.

If there is no out elbow on face A, then the break path begins with consecutive horizontal steps (possibly none), followed by consecutive slanted steps. This means that $\nu_k=\lambda_k+1$ for indices $k$ in a terminal subinterval of $[i_l,j_l]$. Since the special cases of the previous proposition cannot arise, $\rho_k=\mu_k-1$ for all such $k$, which is dominant because $\mu_{j_l}>\mu_{j_l+1}$, so this agrees with the local condition.

\begin{figure}
\centering
\scalemath{.5}{
\begin{tikzpicture}
  \begin{ternaryaxis}[
    xmin=0,
    ymin=0,
    zmin=0,
    xmax=30,
    ymax=30,
    zmax=30,
    xtick={0, ..., 30},
    ytick={0, ..., 30},
    ztick={0, ..., 30},
    xticklabels={},
    yticklabels={},
    zticklabels={},
    major tick length=0,
        minor tick length=0,
            every axis grid/.style={gray},
    minor tick num=1,
    ]
 
   \addplot3[color=blue,ultra thick] coordinates {(11,19,0)(10,19,1)(10,17,3)(9,17,4)(9,14,7)(9,12,9)(6,12,12)(6,9,15)(4,9,17)(4,4,22)(0,4,26)(0,0,30)};
   
   \addplot3[thick] coordinates {(0,23,7)(23,0,7)};
   \addplot3[thick] coordinates {(0,22,8)(22,0,8)};
   \addplot3[thick] coordinates {(0,21,9)(21,0,9)};
   \addplot3[thick] coordinates {(0,20,10)(20,0,10)};
   \addplot3[thick] coordinates {(0,19,11)(19,0,11)};
   \addplot3[thick] coordinates {(0,18,12)(18,0,12)};
   \addplot3[thick] coordinates {(0,17,13)(17,0,13)};
   \addplot3[thick] coordinates {(0,16,14)(16,0,14)};
   \addplot3[thick] coordinates {(0,15,15)(15,0,15)};
   \addplot3[thick] coordinates {(0,14,16)(14,0,16)};
   \addplot3[thick] coordinates {(0,13,17)(13,0,17)};

  \end{ternaryaxis}
\end{tikzpicture}}

\vspace{-2.75mm}

\scalemath{.5}{
\begin{tikzpicture}
  \begin{ternaryaxis}[
  clip=false,
    xmin=0,
    ymin=0,
    zmin=0,
    xmax=30,
    ymax=30,
    zmax=30,
    yscale=-1,
    xtick={0, ..., 30},
    ytick={0, ..., 30},
    ztick={0, ..., 30},
    xticklabels={},
    yticklabels={},
    zticklabels={},
    major tick length=0,
        minor tick length=0,
            every axis grid/.style={gray},
    minor tick num=1,
    ]

   \addplot3[color=red,ultra thick] coordinates {(0,30,0)(7,23,0)(7,18,5)(14,11,5)(14,6,10)(16,4,10)(16,0,14)};

   \addplot3[thick] coordinates {(0,23,7)(7,23,0)};
   \addplot3[thick] coordinates {(0,22,8)(8,22,0)};
   \addplot3[thick] coordinates {(0,21,9)(9,21,0)};
   \addplot3[thick] coordinates {(0,20,10)(10,20,0)};
   \addplot3[thick] coordinates {(0,19,11)(11,19,0)};
   \addplot3[thick] coordinates {(0,18,12)(12,18,0)};
   \addplot3[thick] coordinates {(0,17,13)(13,17,0)};
   \addplot3[thick] coordinates {(0,16,14)(14,16,0)};
   \addplot3[thick] coordinates {(0,15,15)(15,15,0)};
   \addplot3[thick] coordinates {(0,14,16)(16,14,0)};
   \addplot3[thick] coordinates {(0,13,17)(17,13,0)};
   
   \addplot3[thick] coordinates {(0,18,12)(12,18,0)};

  \end{ternaryaxis}
\end{tikzpicture}}
\caption{Break paths with strips from $i_l$ to $j_l+1$ in bold. \label{break paths of interval}}
\end{figure}

Now suppose that face A does have an out elbow. Let A$_1$ denote face A, and define A$_k$ so that the $k$th shave step excavates face A$_k$ to reveal the smaller face A$_{k+1}$. Face A$_{k+1}$ consists of the strips indexed by $k+1$ to $n+1$. In particular, the $k$th shave step reveals the $k$th step of the break path of the bottom left face and hence determines $\rho_k$ from $\mu_k$. If there is a slant between the $k$ and $k+1$ strips of face A$_k$ and the weight along the $k+1$ strip is not increased during the shave step, then $\rho_k=\mu_k-1$. If this happened for $k\in [\alpha_1+1,h_l]$, then $\mu$ would yield a nondominant $\rho$. We will show that this does not happen because the special cases of the shave steps perform the desired sorting.

At the $i_l$ shave step, the break path of B$_{i_l}$ is horizontal until strip $h_l+1$ and slanted until strip $j_l+1$.  For each $k\in [i_l,a]$, the $k$th step in A's break path is horizontal, so the $k$th shave step reveals that $\rho_k=\mu_k$. Consider the $a+1$ shave step. Since there is an out elbow at $h_l+1$ in the break paths of both A$_{a+1}$ and B$_{a+1}$ and $\mu_{k}=\mu_{k-1}$ for all $k$, the special case of C(iii) occurs at strip $h_l+1$ and triggers a string of special cases of D(iii) in strips $a+1$ to $h_l$. 

Thus, although there was a slant between the $a+1$ and $a+2$ strips, the $a+1$ shave step reveals that $\rho_{a+1}=\mu_{a+1}$. The break path of A$_{a+2}$ now has slanted steps from strip $a+2$ to $h_l+2$ and from $b+1$ to $j_l+1$. It has horizontal steps from strip $i_l$ to $a+2$ and from $h_l+2$ to $b+1$. The break paths of A$_{a+2}$ and B$_{a+2}$ now have out elbows at strip $h_l+2$, so this process repeats. This continues for $b-h_l$ steps with the slanted intervals of the A break path moving closer together each time until they meet.  

The special case of C(iii) can no longer occur because the break path of B does not have an out elbow at the $j_l+1$ strip. The end result is $\rho_k=\mu_{k}-1$ on the final $(h_l-\alpha_1)+(j_l-\alpha_2)$ positions of $[i_l,j_l]$ to yield a dominant weight $\rho$.
\end{proof}

\bibliographystyle{amsalpha}
\bibliography{biblio2.0-arXiv}
 
\end{document}